\newcommand{\G}{\mathcal{G}}
\newcommand{\cS}{\mathcal{S}}
\newcommand{\cB}{\mathcal B}
\newcommand{\cU}{\mathcal U}
\newcommand{\cL}{\mathcal{L}}
\newcommand{\cM}{\mathcal{M}}
\newcommand{\cC}{\mathcal{C}}
\newcommand{\cV}{\mathcal{V}}
\newcommand{\cT}{\mathcal{T}}
\newcommand{\cX}{\mathfrak{X}}
\newcommand{\fB}{\mathfrak{B}}
\newcommand{\cN}{\mathcal{N}}
\newcommand{\fX}{\mathfrak{X}}
\newcommand{\cG}{\mathcal{G}}
\newcommand{\ccQ}{\mathcal{Q}}
\newcommand{\trace}{\mbox{\itshape trace}}
\newcommand{\diag}{\mbox{\itshape diag}}
\newcommand{\spin}{\text{\itshape spin}}
\newcommand{\baM}{\overline{M}}
\newcommand{\PG}{\mathrm{PG}}
\newcommand{\F}{\mathbb{F}}
\newcommand{\FF}{\mathbb{F}}
\newcommand{\N}{\mathcal{N}}
\newcommand{\cP}{\mathcal{P}}
\newtheorem{lemma}{Lemma}[section]
\newtheorem{theo}[lemma]{Theorem}
\newtheorem{co}[lemma]{Corollary}
\newtheorem{mth}{Main Result}
\newtheorem{theorem}{Theorem}
\newproof{proof}{Proof}
\begin{document}
\begin{frontmatter}
\title{Codes and caps from orthogonal Grassmannians}
\author[IC]{Ilaria Cardinali}
\ead{ilaria.cardinali@unisi.it}
\address[IC]{Department of Information Engineering and Mathematics, University of Siena,
Via Roma 56, I-53100, Siena, Italy}
\author[LG]{Luca Giuzzi\corref{cor1}}
\ead{giuzzi@ing.unibs.it}
\address[LG]{DICATAM - Section of Mathematics,
University of Brescia,
Via Valotti 9, I-25133, Brescia, Italy}
\cortext[cor1]{Corresponding author. Tel. +39 030 3715739; Fax. +39 030 3615745}
\begin{abstract}
In this paper we investigate linear
error correcting codes and projective caps related
to the Grassmann embedding $\varepsilon_k^{gr}$
of an orthogonal Grassmannian $\Delta_k$.
In particular,
we determine some of the parameters of the codes arising
from the projective system determined by
$\varepsilon_k^{gr}(\Delta_k)$.
We also study
special sets of points of $\Delta_k$ which are met
by any line of $\Delta_k$ in at most $2$ points and we show
that their image under the Grassmann embedding $\varepsilon_k^{gr}$
is a projective cap.
\end{abstract}
\begin{keyword}
  Polar Grassmannians \sep dual polar space \sep
  embedding \sep
  error correcting code \sep cap \sep Hadamard matrix
  \sep Sylvester construction.

\MSC[2010] 51A50 \sep 51E22 \sep 51A45
\end{keyword}
\end{frontmatter}
\section{Introduction}\label{Introduction}
The overarching theme of this paper is the behaviour of the image of the
Grassmann embedding $\varepsilon_k^{gr}$ of an orthogonal Grassmannian
$\Delta_k$ with $k\leq n$ with respect to linear subspaces of either
maximal or minimal dimension.
In the former case,
we obtain the parameters of the linear error correcting
codes arising from the projective system determined by the pointset
$\varepsilon_k^{gr}(\Delta_k)$ and
provide a bound on their minimum distance.
%
%This paper deals with linear error correcting codes and caps
%related to the Grassmann embedding $\varepsilon_k^{gr}$ of
%an orthogonal Grassmannian $\Delta_k$ with $k\leq n$.
In the latter,
we consider and construct special sets of points of $\Delta_k$
 that are met by each line of $\Delta_k$ in at
most $2$ points and show that the Grassmann embedding
maps these sets in projective caps. Actually, an explicit construction of
a family of such sets, met by any line in at most $1$ point,
is also provided, and a link with Hadamard
matrices is presented.

The introduction is organised as follows:
in Subsection~\ref{Sect-1.1}
we shall provide a background on embeddings of orthogonal
Grassmannians; Subsection~\ref{Sect-1.2} is devoted to codes
arising from projective systems, while in Subsection~\ref{Sect-1.3}
we summarise our main results and outline the structure of the paper.

\subsection{Orthogonal Grassmannians and their embeddings}
\label{Sect-1.1}

Let $V := V(2n+1,q)$ be a $(2n+1)$--dimensional vector space over a finite
field $\FF_q$ endowed with a non--singular quadratic form  $\eta$ of
Witt index $n$.
For $1\leq k\leq n$, denote by $\G_k$ the $k$--Grassmannian
of $\PG(V)$ and by $\Delta_k$ the $k$--polar Grassmannian
associated to $\eta$,  in short the latter will be
called an \emph{orthogonal Grassmannian}.
We recall that $\G_k$ is the point--line geometry whose
points are the $k$--dimensional subspaces of $V$ and whose lines are
sets of the form
\[\ell_{X,Y} := \{Z \mid X \subset Z\subset Y, ~\dim(Z) = k\},\]
where $X$ and $Y$ are  any two subspaces of $V$ with $\dim(X) = k-1$,
$\dim(Y) = k+1$ and $X\subset Y$.

The orthogonal Grassmannian $\Delta_k$ is the
proper subgeometry of $\G_k$ whose
points are the $k$--subspaces of $V$ totally
singular for $\eta$.
 For $k<n$ the lines of $\Delta_k$ are exactly the
lines $\ell_{X,Y}$ of $\G_k$ with $Y$ totally singular; on the other hand,
when $k=n$ the
lines of $\Delta_n$ turn out to be the sets
\[\ell_{X}:=\{Z\mid X \subset Z\subset X^{\perp} , ~\dim(Z) = n,
~ Z ~\text{totally ~singular}\}\]
with $X$  a totally singular
$(n-1)$--subspace of $V$ and $X^\perp$ its orthogonal  with
respect to $\eta$. Note that the points of $\ell_{X}$ form a conic in the
projective plane $\PG(X^\perp/X)$.
%\par
Clearly, $\Delta_1$ is just the
orthogonal polar space of rank $n$ associated to $\eta$;
the geometry $\Delta_n$ can be
regarded as its dual and is thus
called the \emph{orthogonal dual polar space} of rank $n$.
Recall that the size of the point--set of $\Delta_k$ is
$\prod_{i=0}^{k-1}\frac{q^{2(n-i)}-1}{q^{i+1}-1}$;
see~e.g.~\cite[Theorem 22.5.1]{HT91}.

Given a point--line geometry $\Gamma=(\cP,\mathcal{L})$ we say that an
injective map $e\colon\cP\to\PG(V)$ is a
\emph{projective embedding} of $\Gamma$ if the following conditions hold:
\begin{enumerate}[(1)]
\item $\langle e(\cP)\rangle =\PG(V)$;
\item \label{c2a} $e$ maps any line of $\Gamma$ onto a projective line.
\end{enumerate}
Following~\cite{VM}, see also~\cite{CP1},
when condition (\ref{c2a}) is replaced by
\begin{itemize}
\item[(\ref{c2a}')] $e$ maps any line of $\Gamma$ onto a
 non--singular conic of $\PG(V)$ and for all $l\in\cL$,  $\langle e(l)\rangle \cap e(\cP)=e(l)$
\end{itemize}
we say that $e$ is a
\emph{Veronese embedding} of $\Gamma$.

The dimension $\dim(e)$ of an
embedding $e\colon \Gamma \rightarrow \PG(V)$,
either projective or Veronese,
is the dimension of the vector space $V$.
When $\Sigma$ is a proper subspace of $\PG(V)$
such that $e(\Gamma)\cap \Sigma=\emptyset$ and $\langle e(p_1),
e(p_2)\rangle\cap \Sigma =\emptyset$ for any two distinct points $p_1$
and $p_2$ of $\Gamma$, then it is possible to define a new embedding
$e/\Sigma$ of $\Gamma$ in the quotient space $\PG(V/\Sigma)$ called
the \emph{quotient of $e$ over $\Sigma$} as $(e/\Sigma)(x)=\langle
e(x), \Sigma\rangle/\Sigma$.

Let now $W_k :=\bigwedge^k V$. The \emph{Grassmann} % \emph{natural}
 or \emph{Pl\"ucker}
 \emph{embedding}
$e_k^{gr}:\G_k\rightarrow \PG(W_k)$ % of $\G_k$
maps the arbitrary $k$--subspace
$\langle v_1,v_2,\ldots, v_k\rangle$ of $V$ (hence a point of $\G_k$)
to the point $\langle
v_1\wedge v_2\wedge\cdots \wedge v_k \rangle$ of $\PG(W_k)$. Let
$\varepsilon_k^{gr}:= {e_k^{gr}}|_{\Delta_k}$ be the restriction of
$e_k^{gr}$ to $\Delta_k$. For $k<n$, the mapping
$\varepsilon_k^{gr}$ is a projective embedding of $\Delta_k$ in the
subspace $\PG(W_k^{gr}):=\langle\varepsilon_k^{gr}(\Delta_k)\rangle$
of $\PG(W_k)$ spanned by $\varepsilon_k^{gr}(\Delta_k)$. We call
$\varepsilon_k^{gr}$ the \emph{Grassmann} embedding of $\Delta_k$.

If $k=n$, then $\varepsilon_n^{gr}$ is a Veronese embedding
and  maps the lines of $\Delta_n$ onto non--singular
conics of $\PG(W_n)$.
The dual polar space $\Delta_n$ affords
also a projective embedding of dimension $2^n,$ namely
the spin embedding $\varepsilon_n^{\spin}$;
for more details we refer the reader to either \cite{Chevalley} or ~\cite{BC}.

Let now $\nu_{2^n}$ be the usual quadric Veronese map
$\nu_{2^n}\colon V(2^n,{\mathbb{F}})\to V({{2^n+1}\choose 2},{\mathbb{F}})$
given by
\[(x_1,\ldots,x_{2^n})\to
(x_1^2,\ldots,x_{2^n}^2,x_1x_2,\ldots, x_1x_{2^n},x_2x_3,\ldots,
x_2x_{2^n}, \ldots, x_{2^n-1}x_{2^n}).\]
It is well known that $\nu_{2^n}$
defines a Veronese embedding of the point--line geometry
$\PG(2^n-1,\FF)$ in $\PG({{2^n+1}\choose 2}-1,\FF)$, which will
be also denoted by $\nu_{2^n}$.

The composition $\varepsilon^{{vs}}_n
:= \nu_{2^n}\cdot\varepsilon^{\spin}_n$ is a Veronese
embedding of $\Delta_n$ in a subspace $\PG(W_n^{vs})$ of
$\PG({{2^n+1}\choose 2}-1,\FF)$: it is called the \emph{Veronese--spin
embedding} of $\Delta_n$.

We recall some results from~\cite{CP1} and~\cite{CP2} on the Grassmann
and Veronese--spin embeddings of $\Delta_k$, $k\leq n$. Observe that
these results hold over arbitrary fields, even if in the present
paper we shall be concerned just with the finite case.

\begin{theorem}\label{emb1} Let $\FF_q$ be a finite field with  $char(\FF_q)\neq2$. Then
\begin{enumerate}[(1)]
\item\label{emb1:pt1} $\dim ({\varepsilon}_k^{gr})={{2n+1}\choose k}$ for any
$n\geq 2$, $k\in\{1,\ldots,n\}$.
\item\label{emb1:pt2}  $\varepsilon_n^{{vs}}\cong {\varepsilon}_n^{gr}$ for
any $n\geq 2$.
\end{enumerate}
\end{theorem}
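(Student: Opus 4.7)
For part (\ref{emb1:pt1}), I would split the argument into an easy upper bound and a harder lower bound. The upper bound $\dim(\varepsilon_k^{gr})\le\binom{2n+1}{k}$ is immediate, since $\varepsilon_k^{gr}$ is the restriction of the Pl\"ucker embedding $e_k^{gr}$, whose target has dimension $\binom{2n+1}{k}$. The lower bound amounts to showing that the wedges $v_1\wedge\cdots\wedge v_k$ coming from totally singular subspaces $\langle v_1,\ldots,v_k\rangle$ already span $\bigwedge^k V$. My approach would be to fix a hyperbolic basis $e_1,\dots,e_n,f_1,\dots,f_n,g$ of $V$ adapted to $\eta$ and to show that every element of the corresponding standard basis of $\bigwedge^k V$ lies in the span of $\varepsilon_k^{gr}(\Delta_k)$. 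The ``diagonal'' wedges $e_I\wedge f_J$ with $I,J\subset\{1,\ldots,n\}$ disjoint come for free, since $\langle e_i:i\in I\rangle\oplus\langle f_j:j\in J\rangle$ is totally singular; the remaining wedges, which involve $g$ or a polar pair $e_i,f_i$, would be recovered from perturbed totally singular subspaces such as $\langle e_i+\alpha g-\alpha^2 f_i,\ldots\rangle$ via Vandermonde interpolation in the parameter~$\alpha$.

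For part (\ref{emb1:pt2}), I would produce an explicit $\mathrm{Spin}(2n+1)$-equivariant linear map that identifies the two embeddings. The starting point is that $\varepsilon_n^{vs}$ sends a maximal totally singular subspace $M$ to the symmetric square $s_M\odot s_M$ of its pure spinor $s_M$, while $\varepsilon_n^{gr}$ sends $M$ to its Pl\"ucker coordinate in $\bigwedge^n V$. Clifford multiplication provides a natural $\mathrm{Spin}(2n+1)$-equivariant map $\mathrm{Sym}^2(S)\to\bigwedge^n V$; checking on a single pure spinor $s_M$ one verifies that $s_M\odot s_M$ is sent to a nonzero scalar multiple of $\varepsilon_n^{gr}(M)$. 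Equivariance together with Schur's lemma would then identify the span of all pure spinor squares with the image of $\varepsilon_n^{gr}$, so that the two embeddings are isomorphic. The fact that both have dimension $\binom{2n+1}{n}$ falls out as a byproduct, reconciling part (\ref{emb1:pt2}) with part (\ref{emb1:pt1}).

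The main obstacle in part (\ref{emb1:pt1}) is the combinatorial bookkeeping required when one wants to recover a basis wedge containing $g$ together with several polar pairs simultaneously: one has to engineer a family of totally singular subspaces whose Pl\"ucker coordinates solve an interpolation problem, and verify that the associated linear system is non-degenerate. In part (\ref{emb1:pt2}), the delicate point is proving that the span of pure spinor squares in $\mathrm{Sym}^2(S)$ really has dimension $\binom{2n+1}{n}$, rather than some proper submodule; this is where the irreducibility of the relevant $\mathrm{Spin}(2n+1)$-summand in characteristic not two is essential, and it also explains why the hypothesis $\mathrm{char}(\FF_q)\ne 2$ enters in both parts of the statement.
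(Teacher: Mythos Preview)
The paper does not contain a proof of this theorem. Theorem~\ref{emb1} is stated in the introductory section~1.1 explicitly as a result \emph{recalled} from~\cite{CP1} and~\cite{CP2}; no argument is given in the present paper. Hence there is no ``paper's own proof'' against which your proposal can be compared.

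That said, your outline is broadly in the spirit of how such results are established. For part~(\ref{emb1:pt1}) the strategy of fixing a Witt basis and recovering the standard basis vectors of $\bigwedge^k V$ from Pl\"ucker coordinates of carefully chosen totally singular $k$-spaces, using polynomial interpolation in auxiliary scalars, is exactly the kind of argument used in~\cite{CP2}. For part~(\ref{emb1:pt2}) your Clifford-algebra/representation-theoretic route via an equivariant map $\mathrm{Sym}^2(S)\to\bigwedge^n V$ is more algebraic than the approach in~\cite{CP1}, which works rather at the level of the embeddings themselves and the lattice of quotients, but either viewpoint leads to the same conclusion in odd characteristic. The one place where your sketch is genuinely thin is the dimension count for the span of pure-spinor squares: invoking ``irreducibility of the relevant $\mathrm{Spin}(2n+1)$-summand'' over $\FF_q$ is not automatic for arbitrary odd $q$, and a self-contained argument would need to justify this (or bypass it by a direct computation), as~\cite{CP1} does.
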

When $char(\FF_q)=2$ there exist two subspaces $\N_1\supset \N_2$ of $\PG(W_n^{vs})$, called
\emph{nucleus subspaces}, such that the following holds.
 \begin{theorem}\label{emb2} Let $\FF_q$ be a finite field with  $char(\FF_q)=2$. Then,
\begin{enumerate}[(1)]
\item\label{emb2:pt1} $\dim ({\varepsilon}_k^{gr})={{2n+1}\choose
k}-{{2n+1}\choose {k-2}}$ for any $k\in\{1,\ldots,n\}$.
%\end{enumerate}
%If $char(\FF)= 2$ and $\FF$ is
%perfect then
%\begin{enumerate}[(1)]
%\setcounter{enumi}{1}
\item\label{emb2:pt2} $\varepsilon_n^{{vs}}/\N_1\cong
{\varepsilon}_n^{\spin}$ for any $n\geq 2$.
\item\label{emb2:pt3} $\varepsilon_n^{{vs}}/\N_2\cong
{\varepsilon}_n^{gr}$ for any $n\geq 2$.
\end{enumerate}
\end{theorem}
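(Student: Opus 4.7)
The plan is to start from the $k=n$ case of part~(\ref{emb2:pt1}), deduce parts~(\ref{emb2:pt2}) and (\ref{emb2:pt3}) by constructing the nucleus subspaces explicitly, and then obtain part~(\ref{emb2:pt1}) for $k<n$ by a module-theoretic descent. The central new ingredient, compared to Theorem~\ref{emb1}, is the observation that in characteristic $2$ the squaring map $\sigma\colon w\mapsto w\cdot w$ from $W_n^{\spin}$ into the ambient $\mathrm{Sym}^2 W_n^{\spin}$ of the Veronese factor is $\FF_q$-linear, since $(u+w)^2=u^2+w^2$ and $(\lambda w)^2=\lambda^2 w^2$ with $\lambda^2\in\FF_q$. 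Hence its image is an $\FF_q$-subspace rather than the affine cone that appears in odd characteristic, and this is precisely what creates the extra invariant subspaces of $W_n^{vs}$ that must be identified as $\N_1$ and $\N_2$.

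For part~(\ref{emb2:pt2}), I would take $\N_1\subset\PG(W_n^{vs})$ to be a $O(V,\eta)$-invariant complement of the projectivisation of the $\sigma$-image; modding out by $\N_1$ then identifies $(\varepsilon_n^{\spin}(p))^2$ with $\varepsilon_n^{\spin}(p)$ and produces a linear isomorphism $W_n^{vs}/\N_1\to W_n^{\spin}$ realising the spin embedding. The two non-trivial checks are that $\N_1$ is disjoint from $\varepsilon_n^{vs}(\Delta_n)$ and from every secant $\langle\varepsilon_n^{vs}(p_1),\varepsilon_n^{vs}(p_2)\rangle$, both of which follow from the fact that the image points are the ``diagonal'' squares $w^2$ while $\N_1$ sits in the ``off-diagonal'' complement. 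For part~(\ref{emb2:pt3}), $\N_2\subset\N_1$ is a smaller invariant subspace whose dimension is pinned down by the target dimension from part~(\ref{emb2:pt1}); its identification proceeds by describing the canonical $O(V,\eta)$-equivariant surjection from the Veronese-spin model onto the Grassmann model and taking $\N_2$ to be its kernel.

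Part~(\ref{emb2:pt1}) for $k=n$ then falls out of the dimension count in part~(\ref{emb2:pt3}), while for $k<n$ the formula $\binom{2n+1}{k}-\binom{2n+1}{k-2}$ is the dimension of the simple $B_n$-module with highest weight $\omega_k$ in characteristic $2$; this module can be realised inside $\bigwedge^k V$ as the kernel of the contraction $\partial\colon\bigwedge^k V\to\bigwedge^{k-2} V$ induced by the polarisation of $\eta$. One shows that $\varepsilon_k^{gr}(\Delta_k)$ spans exactly $\ker\partial$ by exhibiting an explicit spanning family of totally singular wedges $v_1\wedge\cdots\wedge v_k$ and verifying both that $\partial$ vanishes on each such wedge and that $\partial$ is surjective onto $\bigwedge^{k-2}V$. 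The principal obstacle is the coordinate-free identification of $\N_1$ and $\N_2$ together with the disjointness conditions for a valid quotient embedding: in characteristic $2$ the standard decomposition of a tensor square into symmetric and alternating parts breaks down, so the invariant-theoretic separation of the spin and Grassmann contributions inside $W_n^{vs}$ must be carried out by a careful analysis of the action of maximal totally singular subspaces on an explicit basis of the spin representation.
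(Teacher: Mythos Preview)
This theorem is not proved in the paper. It appears in the introductory Subsection~1.1 as background, immediately after the sentence ``We recall some results from~\cite{CP1} and~\cite{CP2}\ldots''; the proofs live entirely in those two references (Cardinali--Pasini). So there is no argument in the present paper to compare your proposal against.

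A couple of remarks on your sketch itself, since you will eventually need to consult~\cite{CP1,CP2}. First, the map $\sigma\colon w\mapsto w\cdot w$ into $\mathrm{Sym}^2 W_n^{\spin}$ is not $\FF_q$-linear in characteristic~$2$: from $\sigma(\lambda w)=\lambda^2\sigma(w)$ you only get Frobenius-semilinearity (linearity holds only for $q=2$). The conclusion you want---that the image is an $\FF_q$-subspace---is still true for semilinear maps, but the reasoning needs adjustment. Second, the Veronese image $\nu_{2^n}(w)$ is \emph{not} the pure diagonal square: with respect to the coordinates used in the paper it has both the components $x_i^2$ and the cross-terms $x_ix_j$ (which do not carry a factor~$2$ in the paper's definition of $\nu_{2^n}$, so they do not vanish). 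What is true is that the \emph{projection} of $\nu_{2^n}(w)$ onto the diagonal block equals $\sigma(w)$, and this projection is injective on projective points; that is what yields disjointness of $\N_1$ from the embedded points and their secants. Your high-level plan---identify $\N_1$ with an invariant complement of the diagonal, then carve out $\N_2\subset\N_1$ as the kernel of the equivariant surjection onto the Grassmann model, and handle~(\ref{emb2:pt1}) for $k<n$ via the contraction $\partial\colon\bigwedge^kV\to\bigwedge^{k-2}V$---is broadly aligned with how~\cite{CP1,CP2} proceed, but the execution there is considerably more explicit (the paper even quotes, in Sections~2.3.2 and~2.4.2, the concrete coordinate description of $\N_2$ for $n=2,3$ taken from~\cite{CP2}).
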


\subsection{Projective systems and Codes}
\label{Sect-1.2}
Error correcting codes are an essential component to any efficient
communication system, as they can be used in order to guarantee
arbitrarily low probability of mistake in the reception of messages
without requiring noise--free operation; see~\cite{MS}.
An $[N,K,d]_q$ projective system $\Omega$ is a set of $N$ points
in $\PG(K-1,q)$ such that for any hyperplane $\Sigma$ of $\PG(K-1,q)$,
\[ |\Omega\setminus\Sigma|\geq d. \]
Existence of $[N,K,d]_q$ projective systems is equivalent to that of
projective linear codes with the same parameters; see~\cite{H1,BvE,DS,TVN}.
Indeed, given a projective system $\Omega=\{P_1,\ldots,P_N\}$, fix
a reference system $\fB$ in
$\PG(K-1,q)$ and consider the matrix $G$ whose columns are
the coordinates of the points of $\Omega$ with respect to $\fB$.
Then, $G$ is the generator matrix of an $[N,K,d]$ code over
$\FF_q$, say $\cC=\cC(\Omega)$,  uniquely defined up to code
equivalence.
Furthermore, as any word $c$ of $\cC(\Omega)$ is of the form
$c=mG$ for some row vector $m\in\FF_q^K$,
it is straightforward to see that the number of
zeroes in $c$ is the same as the number of points of $\Omega$
lying on the hyperplane of equation
$m\cdot x=0$ where $m\cdot x=\sum_{i=1}^K m_ix_i$ and
$m=(m_i)_1^K$, $x=(x_i)_1^K$.
In particular, the minimum distance $d$ of $\cC$ is
\begin{equation}
\label{eq-codes}
  d=\min_{\begin{subarray}{c}\Sigma\leq\PG(K-1,q)\\ \dim\Sigma=K-2 \end{subarray}}
\left(|\Omega|-|\Omega\cap\Sigma|\right).
\end{equation}
\par
The link between incidence structures $\cS=(\cP,\cL)$ and codes
is deep and
it dates at least to~\cite{PLJ}; we refer the interested reader
to~\cite{AK,CvL} and~\cite{TLA05} for more details. Traditionally,
two basic approaches have been proven to be
most fruitful: either to regard the incidence matrix of $\cS$ as
a generator matrix for a binary code, see for instance~\cite{V07,GLS},
or to consider an embedding of
$\cS$ in a projective space and study either the code
arising from the projective system thus determined or its dual;
see~e.g.~\cite{B,CD,GP12} for codes related to the Segre embedding.

Codes based on projective Grassmannians belong to this latter class.
They have been first introduced
in~\cite{R1} as generalisations of Reed--Muller codes of the first order
and whenceforth extensively investigated; see
also~\cite{R2,N96,GL2001,GPP2009}.
%Observe,  that, in light of~\cite{R1},
%composition of Grassmann and Veronese embeddings can be used to
%construct new codes, generalising Reed--Muller codes of higher order;
%see also~\cite{veron_indip}.

\subsection{Organisation of the paper and main results}
\label{Sect-1.3}
In  Section~\ref{lcD} we study linear
codes associated with the projective system
 $\varepsilon_k^{gr}(\Delta_k)$ determined
by the embedding $\varepsilon_k^{gr}$.

We recall that a  \emph{partial spread} of a non-degenerate quadric
is a set of pairwise disjoint generators;
see also \cite[Chapter 2]{DbS}.
A partial spread $\mathcal{S}$ is a \emph{spread} if all the points
of the quadric are covered by exactly one of its elements. We recall that
for $q$ odd the quadrics $Q(4n,q)$ do not admit spreads.

\begin{mth}\label{main1}
  Let $\cC_{k,n}$ be the code arising from the projective
  system $\varepsilon_k^{gr}(\Delta_k)$ for $1\leq k< n$.
  Then, the parameters of $\cC_{k,n}$ are
      \[N=
      \prod_{i=0}^{k-1}\frac{q^{2(n-i)}-1}{q^{i+1}-1},
      %\begin{small}\frac{(q^{n-k+1}+1)(q^{n-k+2}+1)\dots (q^n+1)(q^{n-k+1}-1)(q^{n-k+2}-1)\dots
      %  (q^n-1)}{(q-1)(q^2-1)\dots (q^k-1)}\end{small},\]
      %\end{small}
      \qquad K=\left\{\begin{array}{ll}
          \binom{2n+1}{k} & \mbox{for $q$ odd} \\
          \binom{2n+1}{k}-\binom{2n+1}{k-2} &
          \mbox{for $q$ even,}\\
          \end{array}\right. \,\,\,\, \]
        \[ d\geq \psi_{n-k}(q)(q^{k(n-k)}-1)+1, \]
where $\psi_{n-k}(q)$ is the maximum size of a (partial) spread of the
parabolic quadric $Q(2(n-k),q)$.
\end{mth}
We observe that, in practice, we expect the bound on the minimum
distance not to be sharp.
 %   \end{enumerate}
 As for codes arising from dual polar spaces of small rank,
 we have the following result where the minimum distance is exactly
 determined.
 \begin{mth}\label{main2}
\begin{enumerate}[(i)]
    \item\label{mt2:i}
      The code $\cC_{2,2}$ arising from a dual polar space of rank $2$ has parameters
      \[N=(q^2+1)(q+1),\qquad
      K=\left\{\begin{array}{ll}
          10 & \mbox{for $q$ odd} \\
          9  & \mbox{for $q$ even,}
          \end{array}\right. \qquad d=q^2(q-1).
       \]
    \item\label{mt2:ii}
      The code $\cC_{3,3}$ arising from a dual polar space of rank
      $3$ has parameters
      \begin{small}
      \[
      \begin{array}{ll l}
        N=(q^3+1)(q^2+1)(q+1),\,\,\,& K=35,\,\,\, & d=q^2(q-1)(q^3-1)\,\,\,  \mbox{ for $q$ odd} \\
         & and& \\
        N=(q^3+1)(q^2+1)(q+1),\,\,\,& K=28,\,\,\, & d=q^5(q-1)\,\,\, \mbox{ for $q$ even}.\\
       \end{array}\]
       \end{small}
    \end{enumerate}
\end{mth}

In Section~\ref{sec:2},
we introduce the notion of $(m,v)$--set of a
partial linear space and the notion of polar $m$--cap of $\Delta_k$.
We prove that the image of a polar $m$--cap under the
Grassmann embedding is a projective cap; see also~\cite{EMS} for caps contained in Grassmannians.
\begin{mth}\label{main3}
  Suppose $1\leq k\leq n$.
  Then,
  \begin{enumerate}
    \item
      the image $\varepsilon_k^{gr}(\cX)$ of any polar $m$--cap
      $\cX$ of $\Delta_k$
      is a projective cap
      of $\PG(W_k)$;
    \item the image of
      $\varepsilon_n^{gr}(\Delta_n)$ is
      a projective cap.
\end{enumerate}
\end{mth}
In Section~\ref{construction}
we give an explicit construction of some
$(2^r,1)$--sets contained in $\Delta_k$ with
$r\leq\lfloor k/2\rfloor$. This leads to the following
theorem.
\begin{mth}\label{main4}
\label{m:cc}
For any $r\leq\lfloor k/2\rfloor$, the polar
Grassmannian $\Delta_k$ contains a polar $2^r$-cap $\cX$.
\end{mth}
Finally, in Section~\ref{Hadamard-Sylvester},
we consider matrices $H$ associated to the polar caps $\fX$ of
Main Result~\ref{m:cc} and prove that they are
of Hadamard type. It is well known that these matrices lead
to important codes; see \cite[Chapter 3]{KJH}.
Then, it is shown that it is possible
to introduce an order on the points of $\fX$ as to
guarantee the matrix $H$ to be in Sylvester form, thus obtaining
a first-order Reed--Muller code; see \cite[page 42]{KJH}.

\section{Linear Codes associated to $\Delta_k$}
\label{lcD}
\subsection{General case}
By Theorem~\ref{emb1}, for $q$ odd and $1\leq k\leq n,$
the Grassmann embedding $\varepsilon^{gr}_k$ of $\Delta_k$ into
$\PG(\bigwedge^{k}V)$ has dimension ${2n+1}\choose k$; by Theorem~\ref{emb2},
for $q$ even and $1\leq k\leq n$,
$\dim (\varepsilon^{gr}_k)={{2n+1}\choose{k}}-{{2n+1}\choose{k-2}}$.
As such, the image of $\varepsilon^{gr}_k$
determines
a projective code $\cC^{gr}_{k,n}=\cC(\varepsilon_{k}^{gr}(\Delta_k))$.
%;see also~\cite{BvE}.
Observe that $\cC_{k,n}^{gr}$ can be obtained by the full Grassmann code,
see~\cite{N96}, by deleting a suitable number of components; however,
this does not lead to useful bounds on the minimum distance.
The following lemma is a direct consequence of the definition of
$\cC^{gr}_{k,n}$.
\begin{lemma}\label{lemma-parametri}
The code $\cC^{gr}_{k,n}$ is a $[N,K]$-linear code with
\[N=
\prod_{i=0}^{k-1}\frac{q^{2(n-i)}-1}{q^{i+1}-1},\qquad
K=\begin{cases}
\binom{2n+1}{k} & \mbox{for $q$ odd} \\
\binom{2n+1}{k}-\binom{2n+1}{k-2} & \mbox{for $q$ even}.
\end{cases} \]
%\frac{(q^{n-k+1}+1)(q^{n-k+2}+1)\dots (q^n+1)(q^{n-k+1}-1)(q^{n-k+2}-1)\dots
%  (q^n-1)}{(q-1)(q^2-1)\dots (q^k-1)}\]
\end{lemma}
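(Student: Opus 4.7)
The plan is to unpack the definition of the code $\cC^{gr}_{k,n}=\cC(\varepsilon_k^{gr}(\Delta_k))$ associated to the projective system $\varepsilon_k^{gr}(\Delta_k)\subseteq\PG(W_k^{gr})$ and read off its length and dimension directly from facts already collected in Subsections~\ref{Sect-1.1} and~\ref{Sect-1.2}.

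First I would observe that, by the general construction recalled around~\eqref{eq-codes}, if $\Omega\subset\PG(K-1,q)$ is a projective system spanning the whole ambient space, then the associated code has length $N=|\Omega|$ (one coordinate per point of $\Omega$) and dimension $K$ (the dimension of the ambient projective space plus one, equal to the rank of the generator matrix whose columns are coordinate vectors of the points of $\Omega$, which spans $\FF_q^K$ since $\langle\Omega\rangle=\PG(K-1,q)$). Applied to $\Omega=\varepsilon_k^{gr}(\Delta_k)$ inside its span $\PG(W_k^{gr})$, this gives $N=|\varepsilon_k^{gr}(\Delta_k)|$ and $K=\dim(\varepsilon_k^{gr})$.

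Next I would compute $N$. Since $\varepsilon_k^{gr}$ is an injective map (being an embedding, projective for $k<n$ and Veronese for $k=n$), we have $N=|\Delta_k|$, i.e.\ the number of totally singular $k$-subspaces of $V=V(2n+1,q)$. This number is recorded in the excerpt (citing \cite[Theorem 22.5.1]{HT91}) as
\[
|\Delta_k|=\prod_{i=0}^{k-1}\frac{q^{2(n-i)}-1}{q^{i+1}-1},
\]
which is precisely the stated value of $N$.

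Finally, the value of $K$ follows by splitting into cases on the characteristic. For $q$ odd, Theorem~\ref{emb1}\eqref{emb1:pt1} gives $\dim(\varepsilon_k^{gr})=\binom{2n+1}{k}$; for $q$ even, Theorem~\ref{emb2}\eqref{emb2:pt1} gives $\dim(\varepsilon_k^{gr})=\binom{2n+1}{k}-\binom{2n+1}{k-2}$. Substituting these into $K=\dim(\varepsilon_k^{gr})$ yields exactly the piecewise formula in the statement. There is no real obstacle here: the lemma is a bookkeeping consequence of the embedding-dimension theorems and the known cardinality of the orthogonal Grassmannian, and the only thing to be careful about is to note that $\cC^{gr}_{k,n}$ is considered as a code in its span $\PG(W_k^{gr})$, not in the whole of $\PG(W_k)$, so that the spanning condition required for the $[N,K]$ interpretation is automatic.
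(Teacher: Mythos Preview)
Your argument is correct and matches the paper's treatment: the paper states that the lemma is a direct consequence of the definition of $\cC^{gr}_{k,n}$ and gives no further proof, so your unpacking of $N=|\Delta_k|$ via injectivity of $\varepsilon_k^{gr}$ together with $K=\dim(\varepsilon_k^{gr})$ from Theorems~\ref{emb1} and~\ref{emb2} is exactly the intended reasoning.
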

Given any $m$--dimensional subspace $X\leq V$ with $m>k$,
in an analogous way as the one followed to
define the $k$--Grassmannian $\G_k$ of $\PG(V)$ in Section~\ref{Introduction},
we introduce the
$k$--Grassmannian $\G_k(X)$ of $\PG(X)$.
More in detail,  $\G_k(X)$ is the point--line geometry having as points
the $k$--dimensional subspaces of $X$ and as lines exactly the lines of
$\mathcal{G}_k$ contained in $\G_k(X)$.

 The following lemma is straightforward.
%By the above argument, we have proved the following lemma.
\begin{lemma}
\label{l:mnd}
Suppose $X$ to be a totally singular subspace with $\dim X=m$ and
$k<m<n$; write
$W_k(X)=\langle\varepsilon^{gr}_k(\G_k(X))\rangle\leq W_k$.
Then,
 \[ \varepsilon^{gr}_k(\G_k(X))=\varepsilon^{gr}_k(\Delta_k)\cap W_k(X)
 =e_k^{gr}(\G_k(X)). \]
\end{lemma}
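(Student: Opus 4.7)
The plan is to establish the three-way equality by splitting it into two separate claims: the easy identification $\varepsilon^{gr}_k(\G_k(X)) = e_k^{gr}(\G_k(X))$, and the substantive one $\varepsilon^{gr}_k(\G_k(X)) = \varepsilon^{gr}_k(\Delta_k)\cap W_k(X)$.

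For the third equality I would argue immediately: since $X$ is totally singular, every $k$-dimensional subspace of $X$ is totally singular as well, hence $\G_k(X)\subseteq\Delta_k$ as point sets. As $\varepsilon^{gr}_k$ is by definition the restriction of $e^{gr}_k$ to $\Delta_k$, the two maps coincide on $\G_k(X)$.

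For the first equality, I would first identify $W_k(X)$ with the subspace $\bigwedge^{k}X$ of $\bigwedge^{k}V = W_k$. This uses the standard fact that the Plücker embedding of the $k$-Grassmannian of an $m$-dimensional vector space $X$ spans the whole exterior power $\bigwedge^{k}X$; one sees this directly by fixing a basis of $X$ and noting that the induced decomposable basis of $\bigwedge^{k}X$ lies in $e_k^{gr}(\G_k(X))$. The inclusion $\varepsilon^{gr}_k(\G_k(X))\subseteq\varepsilon^{gr}_k(\Delta_k)\cap W_k(X)$ is then obvious from the definitions.

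For the reverse inclusion, take $p \in \varepsilon^{gr}_k(\Delta_k)\cap W_k(X)$, so that $p=\langle v_1\wedge\cdots\wedge v_k\rangle$ for some totally singular $k$-subspace $Y=\langle v_1,\ldots,v_k\rangle$ of $V$, and the corresponding decomposable $k$-vector belongs to $\bigwedge^{k}X$. The main step is the classical criterion for decomposable vectors in exterior powers: a nonzero decomposable $k$-vector $v_1\wedge\cdots\wedge v_k$ lies in $\bigwedge^{k}X$ if and only if $\langle v_1,\ldots,v_k\rangle\subseteq X$. I would prove this by completing a basis $x_1,\ldots,x_m$ of $X$ to a basis of $V$ and expanding $v_1\wedge\cdots\wedge v_k$ in the induced basis of $\bigwedge^{k}V$: any basis vector of $V$ outside $X$ appearing with a nonzero coefficient in some $v_i$ would produce a nonzero component of $v_1\wedge\cdots\wedge v_k$ outside $\bigwedge^{k}X$, a contradiction. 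Applying this to our situation gives $Y\subseteq X$, hence $Y\in\G_k(X)$ and $p\in\varepsilon^{gr}_k(\G_k(X))$, completing the equality.

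The only non-routine point is the decomposability criterion, but that is a well-known lemma from multilinear algebra; everything else is a direct unpacking of definitions. The hypothesis $m<n$ plays no essential role in this particular lemma beyond placing $X$ inside a genuinely bigger polar space, which will matter in the subsequent application.
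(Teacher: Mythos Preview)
Your proof is correct and is precisely the natural unpacking the paper has in mind: the authors give no argument at all, merely stating that the lemma ``is straightforward''. Your identification of $W_k(X)$ with $\bigwedge^k X$ and the use of the standard fact that a nonzero decomposable $k$-vector lies in $\bigwedge^k X$ if and only if the subspace it represents is contained in $X$ is exactly the expected route; your remark that $m<n$ is not actually used here is also on point.
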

 
Let $X$ be a $k$--dimensional subspace of $V$ contained
in the non-degenerate parabolic quadric  $Q(2n,q)\cong\Delta_1.$
Define the \emph{star $St(X)$ of $X$}
as the set formed by the $i$--dimensional subspaces
of $Q(2n,q)$, $k<i\leq n$, containing $X$. It is well--known that
$St(X)$ is isomorphic to
a parabolic  quadric $Q(2(n-k),q)$; see \cite[Chapter 7]{T}.

Denote by $\psi_r(q)$ the maximum size of a (partial) spread of
$Q(2r,q)$.  Recall that for $q$ even, $Q(2r,q)$
admits a spread; thus $\psi_r(q)=q^{r+1}+1$. For $q$ odd a
general lower bound is $\psi_r(q)\geq q+1$, even if improvements
are possible in several cases; see \cite{DKMS}, \cite[Chapter 2]{DbS}.
\begin{theo}
\label{t:mnd}
If $k<n$,  the minimum distance $d$ of $\cC_{k,n}^{gr}$ is at least
  $$s=\psi_{n-k}(q)(q^{k(n-k)}-1)+1.$$
\end{theo}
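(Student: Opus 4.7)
The plan is to apply formula (\ref{eq-codes}) and, for every hyperplane $\Sigma$ of $\PG(W_k^{gr})$, to exhibit at least $\psi_{n-k}(q)(q^{k(n-k)}-1)+1$ points of $\Omega:=\varepsilon_k^{gr}(\Delta_k)$ lying off $\Sigma$. The two main ingredients are the identification $St(X)\cong Q(2(n-k),q)$ for a totally singular $k$-subspace $X\leq V$, and the classical minimum distance $q^{k(n-k)}$ of the ordinary projective Grassmann code $C(k,n)$ (see e.g.~\cite{N96}).

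Fix a hyperplane $\Sigma$. Since $\langle\Omega\rangle=\PG(W_k^{gr})$, there is at least one totally singular $k$-subspace $X$ of $V$ with $\varepsilon_k^{gr}(X)\notin\Sigma$. Inside $St(X)\cong Q(2(n-k),q)$ I select a partial spread of maximum size $s=\psi_{n-k}(q)$; its generators lift to $n$-dimensional totally singular subspaces $M_1,\dots,M_s$ of $V$ with $X\subset M_i$ for every $i$ and $M_i\cap M_j=X$ whenever $i\neq j$. Each $\G_k(M_i)$ is contained in $\Delta_k$ (all its $k$-subspaces being totally singular), and on it $\varepsilon_k^{gr}$ agrees with the Pl\"ucker map $e_k^{gr}$; thus $\varepsilon_k^{gr}(\G_k(M_i))$ spans a subspace $W_k(M_i)\leq W_k^{gr}$ of dimension $\binom{n}{k}$, exactly as in Lemma~\ref{l:mnd} (whose argument carries over to $m=n$ since $\G_k(M_i)\subseteq\Delta_k$).

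Since $X\in\G_k(M_i)$ but $\varepsilon_k^{gr}(X)\notin\Sigma$, the subspace $W_k(M_i)$ is not contained in $\Sigma$, so $\Sigma\cap W_k(M_i)$ is a hyperplane of $W_k(M_i)$. The minimum distance of the Grassmann code then forces at least $q^{k(n-k)}$ points of $\varepsilon_k^{gr}(\G_k(M_i))$ off $\Sigma$; removing $\varepsilon_k^{gr}(X)$ leaves at least $q^{k(n-k)}-1$ such points in $\varepsilon_k^{gr}(\G_k(M_i)\setminus\{X\})$. From $M_i\cap M_j=X$ one obtains $\G_k(M_i)\cap\G_k(M_j)=\{X\}$, hence by injectivity of $\varepsilon_k^{gr}$ these contributions are pairwise disjoint. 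Summing across $i=1,\dots,s$ and adding back the single point $\varepsilon_k^{gr}(X)$ yields $s(q^{k(n-k)}-1)+1=\psi_{n-k}(q)(q^{k(n-k)}-1)+1$ distinct points of $\Omega\setminus\Sigma$.

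The only substantive external input is Nogin's bound on the ordinary projective Grassmann code, and I expect the delicate point to be verifying that it applies intrinsically inside $W_k(M_i)\subseteq W_k^{gr}$, uniformly over the characteristic of $\FF_q$; everything else is combinatorial bookkeeping resting on the star-quadric picture of $St(X)$ and the pairwise-in-$X$ intersection property of the lifted partial spread.
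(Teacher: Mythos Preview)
Your proof is correct and follows essentially the same line as the paper's: choose a point off the hyperplane, take a maximal partial spread in its star $St(X)\cong Q(2(n-k),q)$, lift its generators to $n$-dimensional totally singular subspaces through $X$, apply Nogin's minimum-distance bound to each embedded Grassmannian $\varepsilon_k^{gr}(\G_k(M_i))$, and use the pairwise-in-$X$ intersection to count. Your remark that the statement of Lemma~\ref{l:mnd} (written for $k<m<n$) needs to be extended to $m=n$ is actually a small improvement in care over the paper, which invokes the lemma at $m=n$ without comment.
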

\begin{proof}
  It is enough to show that for  any hyperplane $\Sigma$ of
  $\PG(W_k)$ not containing $\varepsilon_k^{gr}(\Delta_k)$ there
  are at least $s$ points in
  $\Phi=\varepsilon_k^{gr}(\Delta_k)\setminus\Sigma$ and then use
  \eqref{eq-codes}.
  Recall that when $q$ is odd, $\varepsilon_k^{gr}(\Delta_k)$ is
  not contained in any hyperplane.
  \par
  Let  $E$ be a point of $\Delta_k$ such that $\varepsilon^{gr}_k(E)\in\Phi$;
  as such, $E$ is a $k$--dimensional subspace contained in $Q(2n,q)$ and we
  %%%% QUESTA E' LA PARTE MODIFICATA
  can consider the star $St(E)\cong Q(2(n-k),q)$.
  Take $\Psi$ as a
  partial spread of maximum size of $St(E)$.
  For any $X,X'\in\Psi$, since $X$ and $X'$ are disjoint in
  $St(E)$, we have
  $X\cap X'=E$.  \par
  % Then, there are two maximal totally singular subspaces with
  % respect to $\eta$,
  % say $X_1$ and $X_2$, such that
  % $X_1\cap X_2=E$.
  Furthermore, for any $X\in\Psi$,
  by Lemma~\ref{l:mnd},
  $\varepsilon_k^{gr}(\G_k(X))=
  \varepsilon_k^{gr}(\Delta_k)\cap W_k(X)$, where
  $W_k(X)=\langle \varepsilon_k^{gr}(\G_k(X))\rangle$.
  As $X$ is a $(n-1)$--dimensional projective space,
  we have also that $\varepsilon_k^{gr}(\G_k(X))$
   is isomorphic to the $k$--Grassmannian of an $n$--dimensional
   vector space.
   The hyperplane $\Sigma$ meets the subspace $W_k(X)$
   spanned by $\varepsilon_k^{gr}(\G_k(X))$
   in a hyperplane $\Sigma'$.
   By~\cite[Theorem 4.1]{N96}, wherein
   codes arising from projective Grassmannians are
   investigated and their minimal distance computed, we have
   $|W_k(X)\cap\Phi|\geq q^{k(n-k)}$.
   On the other hand, from
   \[ \varepsilon_k^{gr}(\G_k(X))\cap\varepsilon_k^{gr}(\G_k(Y))
   =\{\varepsilon_k^{gr}(E)\}, \]
   for any $X,Y\in\Psi$,
   it follows that $\varepsilon^{gr}_k(\Delta_k)$ has at least
   $\psi_{n-k}(q)(q^{k(n-k)}-1)+1$ points off $\Sigma$. This completes the proof.
\qed\end{proof}

Lemma~\ref{lemma-parametri} and Theorem~\ref{t:mnd}
together provide  Main~Result~\ref{main1}.
\par
In Subsection~\ref{s:tc} we determine the minimum
distance of $\cC^{gr}_{1,n}$ for $k=1$;
Subsections~\ref{s:dps2} and~\ref{s:dps3}
are dedicated to the case of dual polar spaces
of rank $2$ and $3$;
in these latter cases the minimum distance is precisely computed.

\subsection{Codes from polar spaces $\Delta_{1}$}
\label{s:tc}
If $k=1$, then $\Delta_k$ is just the orthogonal polar space and
$\varepsilon_1^{gr}$ is its natural embedding in $\PG(2n,q)$.
Hence, the code $\cC^{gr}_{1,n}$ is the code arising from the projective
system of the points of a non--singular parabolic quadric
$Q(2n,q)$ of $\PG(2n,q)$.
To compute its minimum distance, in light of \eqref{eq-codes},
it is enough to study the size of $Q(2n,q)\cap\Sigma$ where
$\Sigma$ is an arbitrary hyperplane of $\PG(2n,q)$.
This intersection achieves
its
maximum  at $(q^{2n-1}-1)/(q-1)+q^{n-1}$
when $Q(2n,q)\cap\Sigma$ is a non--singular hyperbolic quadric
$Q^+(2n-1,q)$, see~e.g.~\cite[Theorem 22.6.2]{HT91}.
Hence,
the parameters of the code $\cC^{gr}_{1,n}$ are
\[N=(q^{2n}-1)/(q-1);\,\, K=2n+1;\,\, d=q^{2n-1}-q^{n-1}.\]
The full weight enumerator can now be easily computed, using,
for instance,
\cite[Theorem 22.8.2]{HT91}.

\subsection{Dual polar spaces of rank $2$}
\label{s:dps2}
\subsubsection{Odd characteristic}
\label{s:2o}
Suppose that the characteristic of $\FF_q$ is odd.
By (\ref{emb1:pt2}) in Theorem~\ref{emb1}, the image
$\varepsilon^{gr}_2(\Delta_2)$ of the
dual polar space $\Delta_{2}$ under the Grassmann embedding
is isomorphic to the quadric
Veronese variety $\cV_2$ of $\PG(3,q)$, as embedded in
$\PG(9,q)$.
Length and dimension of the code $\cC^{gr}_{2,2}$ directly
follow from Theorem~\ref{emb1}. By Equation \eqref{eq-codes}, the minimum
distance of $\cC^{gr}_{2,2}$ is $|\cV_2|-m$, where
\[ m:=\max\{|\Sigma\cap\cV_2| : \Sigma \text{ is a hyperplane of $\PG(9,q)$}\}.
\]
It is well known, see~e.g.~\cite[Theorem 25.1.3]{HT91}, that there is
a bijection between the quadrics of $\PG(3,q)$ and the  hyperplane
sections of $\cV_2$; thus, in order to determine
$m$ we just need to consider the maximum cardinality of a quadric $Q$
in $\PG(3,q)$. This cardinality is $2q^2+q+1$, and
corresponds to the case in which $Q$ is the
union of two distinct planes.
Hence, we have the following theorem.
\begin{theo}\label{rk2-q-odd}
If $q$ is odd, then the code $\cC^{gr}_{2,2}$ is a
 $[N,K,d]_q$--linear code with the following parameters
\[N=(q^2+1)(q+1),\qquad K=10,\qquad d=q^2(q-1).\]
The full spectrum of its weights  is
$\{ q^3-q,q^3+q,q^3,q^3-q^2,q^3+q^2\}$.
\end{theo}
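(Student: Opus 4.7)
The plan is to exploit the identification $\varepsilon^{gr}_2(\Delta_2)\cong\cV_2$ in $\PG(9,q)$ provided by Theorem~\ref{emb1}(\ref{emb1:pt2}) to reduce the weight computation to the elementary enumeration of quadrics in $\PG(3,q)$. First, the length $N=(q^2+1)(q+1)$ is just the cardinality of $\Delta_2$ recalled in Section~\ref{Introduction}, and $K=\binom{5}{2}=10$ is immediate from Theorem~\ref{emb1}(\ref{emb1:pt1}). By \eqref{eq-codes}, every nonzero weight of $\cC^{gr}_{2,2}$ equals $|\cV_2|-|\cV_2\cap\Sigma|$ for some hyperplane $\Sigma$ of $\PG(9,q)$, so determining $d$ and the weight spectrum amounts to determining the possible sizes of hyperplane sections of $\cV_2$.

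Next I would invoke the classical projective duality induced by the Veronese map $\nu_4$: hyperplanes of $\PG(9,q)$ correspond bijectively to quadrics of $\PG(3,q)$, and under this correspondence $|\cV_2\cap\Sigma|=|Q_\Sigma|$, where $Q_\Sigma$ is the associated quadric. Because every quadric of $\PG(3,q)$ arises in this way (any quadratic form on $V(4,q)$ can be read off the coefficients of a linear form on $\bigwedge^2 V$ via $\nu_4$), the weight spectrum is exactly $\{\,|\cV_2|-|Q|: Q\text{ a quadric of }\PG(3,q)\,\}$. I would then list the projective equivalence classes of quadrics of $\PG(3,q)$ with their point counts: hyperbolic $Q^+(3,q)$ has $(q+1)^2$ points, elliptic $Q^-(3,q)$ has $q^2+1$, a quadric cone over a conic has $q^2+q+1$, a pair of distinct planes has $2(q^2+q+1)-(q+1)=2q^2+q+1$, a pair of conjugate (over $\FF_{q^2}$) planes has $q+1$ (a line), and a double plane has $q^2+q+1$. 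These yield the five distinct cardinalities $\{q+1,\,q^2+1,\,q^2+q+1,\,(q+1)^2,\,2q^2+q+1\}$.

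Subtracting each from $|\cV_2|=q^3+q^2+q+1$ gives precisely $\{q^3+q^2,\,q^3+q,\,q^3,\,q^3-q,\,q^3-q^2\}$, which is the claimed spectrum. The minimum distance is therefore obtained from the largest $|Q|$, namely from the pair of distinct planes, and equals $d=q^3-q^2=q^2(q-1)$, agreeing also with the general bound of Theorem~\ref{t:mnd}. The only point requiring a moment of care is verifying that the duality above is indeed a bijection onto all quadrics — including the degenerate ones realised over $\FF_q$ — so that each listed cardinality is actually attained and the spectrum is complete rather than merely an upper set; this is standard for the quadric Veronesean and ensures the weight list is sharp.
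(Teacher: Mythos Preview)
Your argument is correct and follows essentially the same route as the paper: identify $\varepsilon_2^{gr}(\Delta_2)$ with the quadric Veronesean $\cV_2\subset\PG(9,q)$ via Theorem~\ref{emb1}(\ref{emb1:pt2}), use the bijection between hyperplane sections of $\cV_2$ and quadrics of $\PG(3,q)$, and read off the weights from the list of possible quadric cardinalities; your explicit case analysis for the full spectrum is exactly what the paper leaves implicit. One small slip: your closing appeal to Theorem~\ref{t:mnd} is misplaced, since that bound is stated only for $k<n$ whereas here $k=n=2$; simply drop that remark.
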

Theorem~\ref{rk2-q-odd} is part (\ref{mt2:i})
 of Main Result~\ref{main2} for $q$ odd.

\subsubsection{Even characteristic}
\label{s:2e}
Assume that $\FF_q$ has characteristic $2$.
By Theorem~\ref{emb2}, let $\N_2$ be the nucleus subspace of $\PG(W_2^{vs})$ such
that $\varepsilon_2^{gr}\cong\varepsilon_2^{vs}/\N_2$.
It is possible to choose a basis
$\fB$ of $V$ so that $\eta$
is given by $\eta(x_1,x_2,x_3,x_4,x_5)=x_1x_4+x_2x_5+x_3^2$;  by
\cite{CP2}, $\N_2$ can then be taken as the $1$--dimensional
subspace $\N_2=\langle(0,0,0,0,0,0,1,1,0,0)\rangle$.
Clearly, the code $\cC_{2,2}^{gr}$ has dimension
$K=\dim(\varepsilon_2^{gr})=\dim(\varepsilon^{vs}/\N_2)=9$.
To determine its minimum distance we use \eqref{eq-codes}; in
particular we need to compute
$|\varepsilon_2^{gr}(\Delta_2)\cap\Sigma|$
with $\Sigma$ an arbitrary hyperplane of the projective space
defined by $\langle\varepsilon_2^{gr}(\Delta_2)\rangle$.
Since $\langle\varepsilon_2^{gr}(\Delta_2)\rangle\cong
\langle\varepsilon_2^{vs}(\Delta_2)/\N_2\rangle$, we have
$\Sigma=\bar{\Sigma}/\N_2$ with $\bar{\Sigma}$ a hyperplane
of $\langle\varepsilon_2^{vs}(\Delta_2)\rangle=
\langle\cV_2\rangle$ containing $\N_2,$ where $\cV_2$,
as in Subsection~\ref{s:2o}, denotes the quadric Veronese variety
of $\PG(3,q)$ in $\PG(9,q)$.
As in the odd characteristic case, hyperplane sections of $\cV_2$
bijectively correspond to quadrics of $\PG(3,q)$ and the maximum cardinality for
a quadric $Q$ of a $3$--dimensional projective space is
attained when $Q$ is the union of two distinct planes, so it is
$2q^2+q+1$.
It is not hard to see that there actually exist degenerate quadrics
$Q$ of $\PG(3,q)$ which are
union of two distinct planes
and such that the corresponding hyperplane
$\Sigma_{Q}$ in $\langle\varepsilon_2^{vs}(\Delta_2)\rangle=
\langle\cV_2\rangle$ contains $\N_2$:
for instance, one can take the quadric $Q$
of equation $x_1x_2=0$.
Hence, $\Sigma_{Q}/\N_2$ is a hyperplane of
$\langle\varepsilon_2^{gr}(\Delta_2)\rangle\cong
\langle\varepsilon_2^{vs}(\Delta_2)/\N_2\rangle$.
As no line joining two points of $\varepsilon_2^{gr}(\Delta_2)$ passes
through $\N_2$,
\[ |\Sigma_{Q}\cap\cV_2|=|\Sigma_{Q}/\N_2\cap\varepsilon_2^{gr}(\Delta_2)|=|Q|=2q^2+q+1. \]
So, $|\varepsilon_2^{gr}(\Delta_2)\cap \Pi|\leq 2q^2+q+1$ for
every hyperplane $\Pi$ of $\langle\varepsilon_2^{gr}(\Delta_2)\rangle$.
This proves the following.
\begin{theo}
\label{t:nxt}
If $q$ is even,  then $\cC^{gr}_{2,2}$ is a linear $[N,K,d]_q$--code
with  parameters
\[N=(q^2+1)(q+1),\qquad K=9,\qquad d=q^2(q-1).\]
\end{theo}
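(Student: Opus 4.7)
The plan is to verify the three parameters of $\cC^{gr}_{2,2}$ separately. The length $N=(q^2+1)(q+1)$ is immediate from Lemma~\ref{lemma-parametri} with $n=k=2$, and the dimension $K=9$ follows from part~(\ref{emb2:pt1}) of Theorem~\ref{emb2}, since $\binom{5}{2}-\binom{5}{0}=10-1=9$. The substantive part is the determination of the minimum distance $d$.

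By \eqref{eq-codes} it is enough to compute
\[ m := \max\{|\varepsilon_2^{gr}(\Delta_2)\cap\Pi| : \Pi\text{ is a hyperplane of }\langle\varepsilon_2^{gr}(\Delta_2)\rangle\}, \]
whence $d=N-m$. To attack $m$, the natural move is to lift the problem to the Veronese--spin embedding. By Theorem~\ref{emb2}(\ref{emb2:pt3}) we have $\varepsilon_2^{gr}\cong\varepsilon_2^{vs}/\N_2$, so every hyperplane $\Pi$ of $\langle\varepsilon_2^{gr}(\Delta_2)\rangle$ is of the form $\bar\Pi/\N_2$ for a hyperplane $\bar\Pi$ of $\langle\cV_2\rangle$ containing the nucleus $\N_2$. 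Combining this with the classical bijection between hyperplane sections of $\cV_2\subset\PG(9,q)$ and quadrics of $\PG(3,q)$, the task reduces to bounding $|Q|$ as $Q$ ranges over those quadrics of $\PG(3,q)$ whose associated Veronese hyperplane $\Sigma_Q$ contains $\N_2$.

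At this point I would invoke the classical fact that any quadric $Q$ of $\PG(3,q)$ satisfies $|Q|\le 2q^2+q+1$, with equality precisely when $Q$ splits as the union of two distinct planes. So it suffices to exhibit one such degenerate quadric whose corresponding hyperplane contains $\N_2$. Using the coordinate description of $\N_2$ from \cite{CP2}, relative to the basis making $\eta(x)=x_1x_4+x_2x_5+x_3^2$, the candidate $Q:\,x_1x_2=0$ should work, and verifying $\N_2\subset\Sigma_Q$ is a short explicit computation in those coordinates.

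Finally, one must justify that passing to the quotient by $\N_2$ preserves intersection cardinalities, i.e.\ that no secant line of $\cV_2$ joining two points of $\varepsilon_2^{vs}(\Delta_2)$ passes through $\N_2$; granted this, $|\Sigma_Q/\N_2\cap\varepsilon_2^{gr}(\Delta_2)|=|\Sigma_Q\cap\cV_2|=|Q|=2q^2+q+1$, which yields
\[ d = N-m = (q^3+q^2+q+1)-(2q^2+q+1)=q^2(q-1). \]
I expect the main obstacle to be this last coordinate--level bookkeeping: verifying both that the explicit $Q:\,x_1x_2=0$ really does produce a hyperplane containing $\N_2$, and that $\N_2$ lies off every chord of $\varepsilon_2^{vs}(\Delta_2)$. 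Both are finite, concrete checks once the basis in \cite{CP2} is fixed, but they are the only places where the argument is not purely formal.
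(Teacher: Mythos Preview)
Your proposal is correct and follows essentially the same route as the paper: lift to $\varepsilon_2^{vs}$, use the Veronese correspondence with quadrics of $\PG(3,q)$, bound $|Q|\le 2q^2+q+1$, and exhibit the pair of planes $x_1x_2=0$ whose hyperplane contains $\N_2$. The only point to add is that the chord condition you flag as an obstacle is in fact automatic from the very definition of a quotient embedding given in Section~\ref{Sect-1.1}, so no extra coordinate check is needed there.
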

Theorem~\ref{t:nxt} is part (\ref{mt2:i})
of Main Result~\ref{main2} for $q$ even.

\subsection{Dual polar spaces of rank $3$}
\label{s:dps3}
\subsubsection{Odd characteristic}
Here $\FF_q$ is assumed to have odd characteristic.
By (\ref{emb1:pt2}) in Theorem~\ref{emb1},
the image of the Grassmann embedding $\varepsilon_3^{gr}\cong
\varepsilon_3^{vs}$ spans a $34$--dimensional projective space.
Recall that the spin embedding $\varepsilon_3^{\spin}$  maps $\Delta_3$ into
the pointset $Q^+_7$ of a
non--singular hyperbolic quadric  of a $7$--dimensional projective
space; see~e.g.~\cite{Chevalley} and~\cite{BC}.
Hence, $\varepsilon_3^{{vs}}(\Delta_3)=
\nu_{2^3}(\varepsilon_3^{\spin}(\Delta_3))=\nu_{2^3}(Q^+_7)$
is a hyperplane section of $\langle \nu_{2^3}(\PG(7,q))\rangle$.
Using the correspondence induced by the quadratic Veronese embedding
$\nu_{2^3}:\PG(7,q)\to\PG(35,q)$  between
quadrics of
$\PG(7,q)$ and hyperplane sections of the quadric
Veronese variety $\cV_2$
we see that the pointset
$\varepsilon_3^{gr}(\Delta_3)\cong\nu_{2^3}(Q_7^+)$ is a hyperplane
section of $\cV_2$.

In order to determine the minimum distance $d$ of the code
$\cC^{gr}_{3,3}$ we need now to compute
\[m=\max \{|\Sigma\cap \varepsilon_3^{gr}(\Delta_3)| \colon \Sigma\, {\textrm{\,\,is\,\,a\,\,hyperplane\,\,of\,\,}}\PG(34,q)\}.\]
Note that
$|\Sigma\cap \varepsilon_3^{gr}(\Delta_3)|=|\Sigma\cap \nu_{2^3}(\ccQ^+_7)|$
and $\Sigma=\bar{\Sigma}\cap\langle\varepsilon_3^{gr}(\Delta_3)\rangle$,
where $\bar{\Sigma}$ is a hyperplane of $\langle\cV_2\rangle\cong\PG(35,q)$
different from
$\langle\nu_{2^3}(Q_7^+)\rangle=\langle\varepsilon_3^{gr}(\Delta_3)\rangle$.
Because of the Veronese
correspondence,
$\bar{\Sigma}=\nu_{2^3}(Q)$ for some quadric $Q$ of $\PG(7,q)$,
distinct from $Q_7^+$.
In particular,
\[ |\varepsilon_3^{gr}(\Delta_3)\cap\Sigma|=|Q_7^+\cap Q|. \]
Hence, in order to determine the minimum distance of the code, it suffices
to compute the maximum cardinality $m$ of $Q_7^+\cap Q$ with
$Q_7^+$   a given non--singular hyperbolic quadric
of $\PG(7,q)$
and $Q\neq Q_7^+$ any other quadric of $\PG(7,q)$.
\par
The study of the spectrum of the cardinalities of the intersection of
any two quadrics has been performed in~\cite{EHRS10},
in the context of functional codes of type $C_2(\ccQ^+)$, that is
codes defined by quadratic functions on quadrics;
see also~\cite[Remark 5.11]{L}.
In particular,
in~\cite{EHRS10}, the value of $m$
is determined by careful analysis of all possible intersection
patterns. Here we present an independent, different and
shorter, argument leading to the same conclusion, based on
elementary linear algebra. We point out
that our technique
could be extended to determine the full intersection spectrum of
two quadrics.
\begin{lemma}\label{intersection1}
Let $\ccQ^+$ be a given non--singular hyperbolic quadric of $\PG(2n+1,q)$.
If $\ccQ$ is any other quadric of $\PG(2n+1,q)$
not containing any generator of $\ccQ^+$,
then $|\ccQ\cap \ccQ^+|\leq (2q^{n}-q^{n-1}-1)(q^{n}+1)/(q-1)$.
\end{lemma}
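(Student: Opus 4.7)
The plan is to double-count incident pairs $(P,\pi)$ with $P\in\ccQ\cap\ccQ^+$ and $\pi$ a generator of $\ccQ^+$ passing through $P$. Three ingredients are needed: a uniform upper bound on $|\ccQ\cap\pi|$ for each generator $\pi$; the ratio between the total number of generators of $\ccQ^+$ and the number of generators through a fixed singular point; and a straightforward averaging step.

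First, I would observe that the generators of $\ccQ^+$ are projective $n$-subspaces, each isomorphic to $\PG(n,q)$. Since by hypothesis $\ccQ$ contains no generator, $\ccQ\cap\pi$ is a proper quadric of $\pi$ for every generator $\pi$. The key local estimate is that any proper quadric of $\PG(n,q)$ has cardinality at most $M := (2q^n-q^{n-1}-1)/(q-1)$, with equality exactly when the quadric is the union of two distinct hyperplanes. I would derive this by a routine classification of quadrics of $\PG(n,q)$ according to the rank $r$ and the type of the defining quadratic form: the reducible (rank-$2$, hyperbolic) case realises the bound, while any non-degenerate quadric, or any cone whose vertex has smaller codimension, is directly seen to be strictly smaller.

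Second, I would record the relevant arithmetic of generators. Let $\cG$ denote the set of generators of $\ccQ^+$, and, for a singular point $P$, let $g_P$ be the number of generators containing $P$; by the transitivity of the orthogonal group on singular points, $g_P$ is independent of $P$. A standard point-generator double count within $\ccQ^+$, together with $|\ccQ^+|=(q^{n+1}-1)(q^n+1)/(q-1)$ and $|\pi|=(q^{n+1}-1)/(q-1)$, yields $|\cG|/g_P = q^n+1$ (this can also be seen from the identification of the quotient $P^\perp/\langle P\rangle$ with a $Q^+(2n-1,q)$).

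Combining these, double-counting the pairs $(P,\pi)$ gives
\[
|\ccQ\cap\ccQ^+|\cdot g_P \;=\; \sum_{\pi\in\cG}|\ccQ\cap\pi| \;\leq\; |\cG|\cdot M,
\]
whence
\[
|\ccQ\cap\ccQ^+| \;\leq\; (q^n+1)\,M \;=\; \frac{(2q^n-q^{n-1}-1)(q^n+1)}{q-1},
\]
as claimed. The main obstacle is the proper-quadric bound $M$: while well known and rather intuitive, its justification requires a short but complete case analysis over the rank and type of a quadratic form on $\PG(n,q)$; once that is in place, the rest of the proof is a light averaging argument that crucially exploits the no-generator hypothesis to ensure $\ccQ\cap\pi\neq\pi$ for every $\pi\in\cG$.
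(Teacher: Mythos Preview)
Your argument is correct and is essentially the same as the paper's: both bound $|\ccQ\cap\pi|$ by the maximal size $M=(2q^{n}-q^{n-1}-1)/(q-1)$ of a proper quadric in a generator $\pi\cong\PG(n,q)$, and then average over generators using that the ratio of the total number $\kappa(n)$ of generators to the number $\kappa(n-1)$ through a fixed point equals $q^{n}+1$. The paper states the double count more tersely (writing only $M\cdot\kappa(n)/\kappa(n-1)$), but the underlying idea is identical.
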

\begin{proof}
%By~\cite[Theorems 22.4.6, 22.4.7]{HT91},
The number of generators of $\ccQ^+$ is
$\kappa(n)=2(q+1)(q^2+1)\cdots (q^{n}+1)$.
By the assumptions,
any generator of $\ccQ^+$ meets
$\ccQ$ in a quadric $\ccQ'$ of $\PG(n,q)$.
It can be easily seen %, see also ~\cite[Theorem 22.5.1]{HT91},
that $|\ccQ'|$ is maximal when $\ccQ'$ is the
union of two distinct hyperplanes; hence,
$|\ccQ'|\leq (2q^{n}-q^{n-1}-1)/(q-1)$.
Thus,
\[ |\ccQ^+\cap\ccQ|\leq \frac{(2q^{n}-q^{n-1}-1)}{(q-1)}\cdot
\frac{\kappa(n)}{\kappa(n-1)}
=
\frac{2q^{2n}-q^{2n-1}+q^{n}-q^{n-1}-1}{q-1}. \]
\qed\end{proof}

\begin{lemma}\label{lemma-intersection odd}
  Given a non--singular hyperbolic quadric $\ccQ^+$ in $\PG(2n+1,q)$,
  $q$ odd, we have
  \[ m=\max|\ccQ^+\cap \ccQ|=
  \frac{2q^{2n}-q^{2n-1}+2q^{n+1}-3q^{n}+q^{n-1}-1}{q-1}, \]
  as $\ccQ\neq\ccQ^+$ varies among all possible quadrics of
  $\PG(2n+1,q)$.
  This  number is attained only if the linear system generated
  by $\ccQ$ and $\ccQ^+$
  contains a quadric splitting in the union of two distinct hyperplanes.
\end{lemma}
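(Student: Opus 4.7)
The plan is to work with the pencil $\mathcal{P} = \{\alpha\ccQ + \beta\ccQ^+ : [\alpha:\beta]\in\PG(1,q)\}$ generated by $\ccQ$ and $\ccQ^+$. The base locus $B := \ccQ\cap\ccQ^+$ is an invariant of the pencil, since $\ccQ'\cap\ccQ'' = B$ for any two distinct $\ccQ',\ccQ''\in\mathcal{P}$; so one is free to replace $\ccQ$ by the most useful member of $\mathcal{P}$. Double-counting incidences $(p,\ccQ')$ with $p\in\ccQ'\in\mathcal{P}$ -- each point of $B$ lies on every member, each point off $B$ lies on exactly one -- yields the identity
\[ q\,|B| + \frac{q^{2n+2}-1}{q-1} = \sum_{\ccQ'\in\mathcal{P}} |\ccQ'|. \]
Since the largest possible cardinality of a quadric in $\PG(2n+1,q)$ is $(2q^{2n+1}-q^{2n}-1)/(q-1)$, attained if and only if the quadric is a union of two distinct hyperplanes, maximising $|B|$ forces $\mathcal{P}$ to be as degenerate as possible.

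First I would dispose of the case in which $\ccQ$ contains no generator of $\ccQ^+$: Lemma~\ref{intersection1} gives the bound $(2q^n-q^{n-1}-1)(q^n+1)/(q-1)$, which rewrites as $(2q^{2n}-q^{2n-1}+q^n-q^{n-1}-1)/(q-1)$, strictly smaller than the claimed $m$ by $2q^{n-1}(q-1)>0$. So the extremum forces $\ccQ$ to contain a generator $\pi$ of $\ccQ^+$, and by linearity every $\ccQ'\in\mathcal{P}$ then contains $\pi$. Assume further that $\mathcal{P}$ contains a quadric $\ccQ_0 = \pi_1\cup\pi_2$ splitting as the union of two distinct hyperplanes; then inclusion-exclusion gives
\[ |B| = |\pi_1\cap\ccQ^+| + |\pi_2\cap\ccQ^+| - |\pi_1\cap\pi_2\cap\ccQ^+|. \]
Each $|\pi_i\cap\ccQ^+|$ is maximised, at $(q^{2n}+q^{n+1}-q^n-1)/(q-1)$, when $\pi_i$ is tangent to $\ccQ^+$ at a point $p_i$, giving a cone $p_i\cdot Q^+(2n-1,q)$; the subtracted term is the cardinality of a quadric in $\PG(2n-1,q)$. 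A case analysis on whether $p_1$ and $p_2$ are conjugate with respect to the bilinear form of $\ccQ^+$ shows the maximum is attained when they are non-conjugate and $\pi_1\cap\pi_2\cap\ccQ^+$ is a non-singular hyperbolic $Q^+(2n-1,q)$ of cardinality $(q^{n-1}+1)(q^n-1)/(q-1)$; substituting gives exactly the claimed value of $m$.

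The main obstacle is the \emph{only if} direction: when $\mathcal{P}$ contains no reducible quadric of the above form, one must still show $|B|<m$. For this I would combine the pencil identity above with the observation that the singular members of $\mathcal{P}$ correspond to the at most $2n+2$ roots in $\FF_q$ of the discriminant polynomial $\det(\lambda A + \mu A^+)$. Bounding the cardinality of each possible cone appearing in $\mathcal{P}$ according to its corank, and summing contributions through the pencil identity, should yield the required strict inequality $|B|<m$. This last case-by-case verification, together with checking that the non-conjugate, $Q^+(2n-1,q)$-section configuration above is the unique one realising $m$, is the delicate step of the argument; everything else is routine elementary linear algebra, as advertised.
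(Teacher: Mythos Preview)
Your approach is genuinely different from the paper's. The paper fixes coordinates so that $\ccQ^+$ has Gram matrix $\left(\begin{smallmatrix}0&I\\I&0\end{smallmatrix}\right)$ and, once a common generator is assumed, writes $\ccQ$ as $\left(\begin{smallmatrix}0&M\\M^T&B\end{smallmatrix}\right)$; counting solutions of the resulting system with $Y$ as parameter gives the closed formula
\[
|\ccQ\cap\ccQ^+|=\frac{(qN_0-N)q^{n-1}}{q-1}+\frac{(q^{n-1}+1)(q^{n+1}-1)}{q-1},
\]
where $N$ is the number of eigenvectors of $M$ and $N_0$ the number of those on which $B$ vanishes. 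The maximum is then read off from a short case analysis on the eigenspace structure of $M$, and in the extremal case a direct rank computation exhibits a rank-$2$ member of the pencil. Your route --- the pencil identity $q|B|+|\PG(2n+1,q)|=\sum_{\ccQ'\in\mathcal P}|\ccQ'|$ together with inclusion--exclusion on the two hyperplane sections once a reducible member is available --- is cleaner conceptually and immediately gives both the value of $m$ and an explicit extremal configuration. The paper's method, by contrast, gets the ``only if'' essentially for free: having parametrised \emph{all} admissible $\ccQ$, it simply checks that the maximiser forces a rank-$2$ quadric in the pencil.

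There is, however, a real gap in your ``only if'' sketch. Knowing merely that there are at most $2n+2$ singular members is not enough: if one allowed, say, $2n+2$ rank-$4$ hyperbolic cones (each of size $\frac{q^{2n+1}+q^{2n}-q^{2n-1}-1}{q-1}$), the pencil identity would give $|B|>m$ for $q$ small relative to $n$. What you actually need is the stronger constraint that the \emph{sum of coranks} of the singular members is at most $2n+2$ --- these coranks are the geometric multiplicities of the eigenvalues of $(A^+)^{-1}A$, and their sum is bounded by the ambient dimension. With that in hand one can show that, excluding corank $2n$, the total ``excess'' $\sum(|\ccQ'|-\frac{q^{2n+1}-1}{q-1})$ is strictly below $q^{2n}+2q^{n+1}-q^{n}$, which is what $|B|=m$ requires. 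This is not difficult, but it is not what you wrote, and it is the step on which your argument actually turns. A smaller point: your case analysis when $\ccQ_0=\pi_1\cup\pi_2$ is in $\mathcal P$ treats only the situation where both $\pi_i$ are tangent; you should also check (it is routine) that if one or both $\pi_i$ are secant to $\ccQ^+$ the resulting $|B|$ is strictly smaller.
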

\begin{proof}
Choose a reference system $\fB$
in $\PG(2n+1,q)$ wherein the quadric $\ccQ^+$ is represented by
the matrix
$C=\begin{pmatrix}
0& I\\
I& 0\\
\end{pmatrix}$, with
$I$ and
$0$ respectively the $(n+1)\times (n+1)$--identity and null matrices.
\par
If $\ccQ$ and $\ccQ^+$ were not to share any generator, then
the bound provided by Lemma~\ref{intersection1} on the size of their
intersection would hold.
Assume, instead, that  $\ccQ$ and $\ccQ^+$ have at least one
generator in common. We will determine the maximum intersection they
can achieve; as this will be larger then the
aforementioned bound, this will determine the actual maximum cardinality
that is attainable.
Under this hypothesis, we can
suppose that
$\ccQ$ is represented
with respect to $\fB$ by a matrix of the form
$S=\begin{pmatrix}
0& M\\
M^T& B\\
\end{pmatrix}$, with
$B$ a $(n+1)\times (n+1)$--symmetric matrix and $M$ an
arbitrary $(n+1)\times (n+1)$--matrix whose transpose is $M^T$.

Let $\begin{pmatrix}
X\\
Y\\
\end{pmatrix}$ be the coordinates
of a vector spanning a point of $\PG(2n+1,q)$  with $X$ and $Y$ column vectors of
length $n+1$.

Then, $\langle \begin{pmatrix}
X\\
Y\\
\end{pmatrix}\rangle \in \ccQ \cap \ccQ^+$  if and only if
\[ \begin{pmatrix}X^T& Y^T\end{pmatrix}\begin{pmatrix}
0& I\\
I& 0\\
\end{pmatrix}\begin{pmatrix}
X\\
Y\\
\end{pmatrix}=0\,\,\,\,{\textrm{and}}\,\,\,\, \begin{pmatrix}X^T&Y^T\end{pmatrix}
\begin{pmatrix}
0& M\\
M^T& B\\
\end{pmatrix}\begin{pmatrix}
X\\
Y\\
\end{pmatrix}=0, \]
which is equivalent to
\begin{equation}\label{intersection-eq}
\left\{\begin{array}{l}
X^TY=0\\
2X^TMY+Y^TBY=0.\\
\end{array}\right.
\end{equation}
We need to determine $M$ and $B$ as to maximise the number of
solutions of \eqref{intersection-eq}; in order to compute
this number, we consider \eqref{intersection-eq} as a family
of linear systems in the unknown $X$, with $Y$ regarded as a parameter.
Four cases
have to be investigated.
\begin{enumerate}
\item Take $Y=0$. Then, any $X$ is solution of \eqref{intersection-eq};
this accounts for $\frac{q^{n+1}-1}{q-1}$ points in the intersection.
\item
When $Y\neq0$ and $Y$ is not an eigenvector of $M$,
\eqref{intersection-eq} is a system of two independent equations in $n+1$
unknowns.
Hence, there are $q^{n-1}$ solutions for $X$.
If $N$ is the total number of eigenvectors of $M$,  the number of points in $\ccQ\cap \ccQ^+$
corresponding to this case is  $\frac{q^{n-1}(q^{n+1}-(N+1))}{q-1}$.
 \item
   If $Y$ is an eigenvector of $M$ and  $Y^TBY\neq0$, then
   \eqref{intersection-eq} has no solutions in $X$.
 \item
   Finally, suppose $Y$ to be an eigenvector of $M$ and $Y^TBY =0$. Then,
   there are $q^{n}$ values for $X$ fulfilling \eqref{intersection-eq}.
   Denote by $N_0$ the number
   of eigenvectors $Y$ of $M$ such that $Y^TBY =0$. Then, there are
   $\frac{q^{n}N_0}{q-1}$ distinct
   projective points in the intersection $\ccQ\cap \ccQ^+$ corresponding to this case.
\end{enumerate}
The preceding argument shows
\begin{equation}\label{intersection2}
\begin{array}{lll}
|\ccQ \cap \ccQ^+| &= &\frac{q^{n-1}(q^{n+1}-N-1)}{q-1} + \frac{q^{n}N_0}{q-1}+
\frac{q^{n+1}-1}{q-1} \\
 &  & \\
&= & \frac{ (qN_0-N)q^{n-1}}{q-1}+ \frac{(q^{n-1}+1)(q^{n+1}-1)}{q-1}.\\
\end{array}
\end{equation}

As $0\leq N_0\leq N\leq q^{n+1}-1$, the maximum of \eqref{intersection2} is
attained for the same values as
the maximum of $g(N_0,N):=(qN_0-N)/(q-1)$,
where $N_0$ and $N$ vary among all allowable values.
Clearly, when this quantity is maximal,
it has the same order of magnitude as $N_0$.
%\par
Several possibilities have
 to be considered:
 \begin{enumerate}[(i)]
   \item \label{o:c} $N_0=N=q^{n+1}-1$;
     then, the matrix $M$ has just one eigenspace of dimension $n+1$ and $B=0$.
From a geometric point of view this means $\ccQ^+\equiv\ccQ$.
   \item\label{ii:c} $N_0=2q^{n}-q^{n-1}-1$ and $N=q^{n+1}-1$;
           then,
           \[g_1:=g(2q^{n}-q^{n-1}-1,q^{n+1}-1)=q^{n}-1.\]
           The matrix $M$ has just one eigenspace $\cM_{n+1}$
           of dimension $n+1$ and $N_0/(q-1)$ is the maximum cardinality
           of a quadric of an
           $n$--dimensional projective space, corresponding to
           the union of two distinct hyperplanes.
    \item\label{i:c} $N_0=N=q^{n}+q-2$; then,
      \[g_2:=g(q^{n}+q-2,q^{n}+q-2)=q^n+q-2.\]
      The matrix $M$ has two distinct eigenspaces say $\cM_n$ and $\cM_1$,
      of dimension respectively $n$ and $1$ and eigenvalues
      $\lambda_n$ and $\lambda_1$.
 \end{enumerate}
All other possible values of $N_0$, corresponding to the cardinality of quadrics in a
 $(n+1)$--dimensional vector space, are smaller than $2q^n-q^{n-1}-1$.
 As $g_1\leq g_2$, the choice
 of (\ref{i:c}) gives the maximum cardinality.
\par
We now investigate the geometric configuration arising in Case (\ref{i:c}).
Let $\mathbb{U}=(u_1,u_2,\dots, u_{n+1})$ be a basis of eigenvectors for $M$
with $Mu_1=\lambda_1 u_1$ and
%%% SPECIFY THAT u_1\in V_{\lambda_1}
take $D$  as
a diagonalising matrix for $M$.
So, the column $D_i$ of $D$ is the eigenvector $u_i$
  for $i=1,\dots, n+1$ and $De_i=u_i$, with
  $\mathbb{E}=(e_1,e_2,\dots, e_{n+1})$  the canonical basis with respect to
  which  $M$ was originally written.
  %%% HERE WE IDENTIFY THE COLUMNS D_i OF D WITH THE VECTORS u_i
  %%% D_1 IS CORRECT
  We have $(M-\lambda_nI)De_1=(\lambda_1-\lambda_n)u_1$
  and $(M-\lambda_nI)De_i=0$ for $i=2,\dots, n+1$. Hence, $(M-\lambda_nI)D$
  is the null matrix except for the first column only. Thus,
  \[D^T(M-\lambda_nI)D=\begin{pmatrix}
    s_{0}  & 0 & \ldots & 0 \\
    s_1 & 0  & \ldots & 0 \\
    \vdots  &  & & \vdots \\
    s_{n} & 0   & \ldots & 0
    \end{pmatrix}.
    \]
    On the other hand, the matrix $B'=D^{T}BD$
    represents a quadric in $\PG(n,q)$ containing both the
    point $\langle(1,0,0,\cdots,0)\rangle$
    and the hyperplane of equation $x_1=0$, where the
    coordinates are written with respect to $\mathbb{U}$.
    Thus,
    \[
    D^{T}BD=\begin{pmatrix}
      0 & r_1 & \ldots & r_{n} \\
      r_1& 0  & \ldots&  0 \\
      \vdots & &   & \vdots \\
      r_{n}   & 0 & \ldots & 0 \\
  \end{pmatrix}. \]
  It is now straightforward to see that
  \[ \mathrm{rank}\left(\begin{pmatrix}
    D^{T} & 0 \\
    0 & D^{T}
    \end{pmatrix}\left(\begin{pmatrix}
      0 & M \\
      M^T & B
    \end{pmatrix}
    -\lambda_n\begin{pmatrix}
      0 & I \\
      I & 0
      \end{pmatrix}\right)\begin{pmatrix}
      D & 0 \\
      0 & D
      \end{pmatrix}\right)=2.
      \]
      In particular,
      as $\begin{pmatrix} D & 0 \\ 0 & D\end{pmatrix}$
      is invertible, also
      \[ \mathrm{rank}\left(\begin{pmatrix}
        0 & M \\
        M^T & B
      \end{pmatrix}
      -\lambda_n\begin{pmatrix}
        0 & I \\
        I & 0
      \end{pmatrix}\right)=2. \]
    Hence, the quadric $\ccQ'=\ccQ-\lambda_n\ccQ^+$ is
    union of two distinct hyperplanes.

    We remark  that also in the case of (\ref{ii:c}), the quadric
    $\ccQ-\lambda_{n+1}\ccQ^{+}$, where $\lambda_{n+1}$ is the
    eigenvalue of $M$ with multiplicity $n+1,$ is union of two hyperplanes,
    as $B$ has rank $2$.
\qed\end{proof}

\begin{theo}\label{n=k=3, odd}
For $q$ odd,  $\cC^{gr}_{3,3}$
is a $[N,K,d]_q$--linear code with the following parameters
\[N=(q^3+1)(q^2+1)(q+1),\qquad K=35,\qquad d=q^2(q-1)(q^3-1).\]
 \end{theo}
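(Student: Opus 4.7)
The plan is to read off $N$ and $K$ from results already established and to reduce the computation of the minimum distance to Lemma~\ref{lemma-intersection odd} via the Veronese identification that immediately precedes the statement.

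First I would observe that $N$ is just the cardinality of the pointset of $\Delta_3$, which equals $(q^3+1)(q^2+1)(q+1)$ by the product formula recalled in Subsection~\ref{Sect-1.1}; and that $K=\binom{7}{3}=35$ is Theorem~\ref{emb1}(\ref{emb1:pt1}) specialised to $n=k=3$, since $\mathrm{char}(\FF_q)\neq 2$.

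For the minimum distance, I would use the chain of identifications set up just before the theorem. Namely, $\varepsilon_3^{gr}\cong\varepsilon_3^{vs}=\nu_{2^3}\cdot\varepsilon_3^{\spin}$, and $\varepsilon_3^{\spin}(\Delta_3)$ is the pointset of a non-singular hyperbolic quadric $Q_7^+\subset\PG(7,q)$. Through the Veronese correspondence $\nu_{2^3}$, hyperplane sections of the quadric Veronese variety $\cV_2\subset\PG(35,q)$ are in bijection with quadrics of $\PG(7,q)$, and under this bijection the hyperplanes $\Sigma$ of $\langle\varepsilon_3^{gr}(\Delta_3)\rangle$ correspond exactly to quadrics $Q\neq Q_7^+$ of $\PG(7,q)$, with $|\varepsilon_3^{gr}(\Delta_3)\cap\Sigma|=|Q_7^+\cap Q|$. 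Hence, by \eqref{eq-codes}, $d = N - m$ where $m=\max\{|Q_7^+\cap Q|:Q\neq Q_7^+\}$.

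Now I would invoke Lemma~\ref{lemma-intersection odd} with $n=3$, which gives
\[ m=\frac{2q^{6}-q^{5}+2q^{4}-3q^{3}+q^{2}-1}{q-1}. \]
All that remains is a short algebraic check: multiplying through by $q-1$ and using $(q-1)N=q^7+q^4-q^3-1$, one finds $(q-1)(N-m)=q^2(q^5-2q^4+q^3-q^2+2q-1)$, and the inner polynomial factors as $(q-1)^2(q^3-1)$, so $d=q^2(q-1)(q^3-1)$. The genuine work is already carried out in Lemma~\ref{lemma-intersection odd}; the only step here is this routine bookkeeping, which is therefore not a real obstacle.
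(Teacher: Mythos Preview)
Your proposal is correct and follows essentially the same route as the paper's own proof: invoke Lemma~\ref{lemma-intersection odd} with $n=3$ to obtain $m$, and then compute $d=N-m$ via \eqref{eq-codes}. The paper's proof is terser---it simply records $m=2q^5+q^4+3q^3+q+1$ (your expression for $m$ after dividing) and says the minimum distance follows---whereas you spell out the $N$, $K$ bookkeeping and the algebraic simplification, but there is no substantive difference.
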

\begin{proof}
  By Lemma~\ref{lemma-intersection odd}, for $n=3$
  the maximum cardinality of the intersection
  of a hyperbolic quadric $\ccQ^+$ with any other quadric
  is
  $2q^5+q^4+3q^3+q+1$. The minimum distance follows
  from \eqref{eq-codes}.
\qed\end{proof}

Theorem~\ref{n=k=3, odd} is part (\ref{mt2:ii})
of Main Result~\ref{main2} for $q$ odd.

\subsubsection{Even characteristic}
We now consider the case $\FF_q=\FF_{2^r}$.
By (\ref{emb2:pt3}) in Theorem~\ref{emb2},
 $\varepsilon^{gr}_3(\Delta_3)\cong
(\varepsilon_3^{{vs}}/\cN_2)(\Delta_3)$,
where $\cN_2$ is the nucleus subspace of
$\langle \varepsilon_3^{{vs}}(\Delta_3)\rangle$.
Note that, by definition of quotient embedding, any line joining
two distinct points of $\varepsilon^{gr}_3(\Delta_3)$ is skew to
$\cN_2$.

As in the case of odd characteristic,
the spin embedding $\varepsilon_3^{\spin}$ maps
$\Delta_3$ to the pointset of a non--singular hyperbolic quadric $\ccQ^+_7$
of a $7$--dimensional projective space $\PG(7,q)$;
see~\cite{BC}. Hence,
by~\cite[Theorem 25.1.3]{HT91}, $\PG(W_3^{vs})=\langle
\varepsilon_3^{{vs}}(\Delta_3)\rangle=\langle
\nu_{2^3}(\varepsilon_3^{spin}(\Delta_3))\rangle =\langle
\nu_{2^3}(Q_7^+)\rangle$ is a hyperplane of the
$35$--dimensional projective space $\langle \nu_{2^3}(\PG(7,q))\rangle=
\langle\cV_2\rangle$, where $\cV_2$ is, as usual, the
quadric Veronese variety
of $\PG(7,q)$.

It is always possible to choose a reference system of $V$
wherein $\eta$ is given by
$\eta(x_1,x_2,x_3,x_4,x_5,x_6,x_7)=x_1x_5+x_2x_6+x_3x_7+ x_4^2$.
Let $(x_{i,j})_{1\leq i\leq j\leq 8}$ be the coordinates of a vector
$x$ in $\langle\cV_2\rangle $, written with respect to the basis
$(e_i\otimes e_j)_{1\leq i\leq j\leq 8}$ of $\langle\cV_2\rangle$,
with $(e_i)_i^8$ a basis of the vector space defining the $7$--dimensional
projective space $\langle \varepsilon_3^{\spin}(\Delta_3)\rangle$.
Then, by~\cite{CP2},  the equation of the
hyperplane $\langle \varepsilon_3^{{vs}}(\Delta_3)\rangle $
in $\langle\cV_2\rangle$ is
$x_{1,8}+x_{2,7}+x_{3,6}+x_{4,5}=0$, while $\cN_2$ can be represented
by the following system of $29$ equations:
\[ \left\{\begin{array}{lll}
x_{2,8}=x_{4,6},&x_{1,1}=0, &x_{1,5}=0\\
x_{2,3}=x_{1,4},&x_{2,2}=0,&x_{2,4}=0\\
x_{1,6}=x_{2,5},&x_{3,3}=0,&x_{2,6}=0\\
x_{1,7}=x_{3,5},&x_{4,4}=0,&x_{3,4}=0\\
x_{3,8}=x_{4,7},&x_{5,5}=0,&x_{3,7}=0\\
x_{5,8}=x_{6,7},&x_{6,6}=0,&x_{4,8}=0\\
x_{1,8}=x_{4,5},&x_{7,7}=0,&x_{5,6}=0\\
x_{2,7}=x_{4,5},&x_{8,8}=0,&x_{5,7}=0\\
x_{3,6}=x_{4,5},&x_{1,2}=0,&x_{6,8}=0 \\
   &x_{1,3}=0,& x_{7,8}=0. \\
\end{array}\right.
\]
Take now $\Sigma\neq W_3^{vs}$ as an arbitrary hyperplane of $\langle
\nu_{2^3}(\PG(7,q))\rangle$ containing $\cN_2$. Then,
$\Sigma$ has equation of
the form
 \[\sum_{1\leq i\leq j\leq 8}a_{i,j}x_{i,j}=0,\]
 with the coefficients $a_{i,j}$ fulfilling
\[
\left\{\begin{array}{ll}
a_{1,4}=a_{2,3}, &
a_{1,6}=a_{2,5}\\
a_{1,7}=a_{3,5}, &
a_{2,8}=a_{4,6}\\
a_{3,8}=a_{4,7}, &
a_{5,8}=a_{6,7}\\
\multicolumn{2}{l}{a_{1,8}+a_{2,7}+a_{3,6}+a_{4,5}=0.}\\
\end{array}
\right. \]
By~\cite[Theorem 25.1.3]{HT91},
there is a quadric $\ccQ_{\Sigma}$ of the $7$--dimensional projective
space $\langle \varepsilon_3^{\spin}(\Delta_3)\rangle=\langle \ccQ^+_7\rangle$
such that
$\Sigma\cap\cV_2=\nu_{2^3}(\ccQ_{\Sigma})$.
Since $\cN_2\subset \Sigma$ and $\cN_2$ is skew with respect to
$\varepsilon_3^{gr}(\Delta_3)$,
\[ |\ccQ_{\Sigma}\cap\ccQ^+_7|= |\Sigma/\cN_2\cap\varepsilon_3^{gr}(\Delta_3)|. \]
Observe that $\langle \varepsilon_3^{gr}(\Delta_3)\rangle\cong
\PG(W^{vs}_3/\cN_2)$ is a $27$--dimensional projective space
and $\Sigma/\cN_2$ is an
arbitrary hyperplane of $\langle \varepsilon_3^{gr}(\Delta_3)\rangle$.
With the notation just introduced, we prove the following.
\begin{lemma}
As
$\ccQ_{\Sigma}$ varies among all the quadrics of $\PG(7,q)$ corresponding
to hyperplanes $\Sigma$ of $\langle\nu_{2^3}(\PG(7,q))\rangle$
containing $\cN_2$,
\[ m=\max|\ccQ_{\Sigma} \cap\varepsilon_3^{\spin}(\Delta_3)|=2q^5+q^4+2q^3+q^2+q+1.
\]
\end{lemma}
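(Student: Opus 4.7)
The plan is to mirror the strategy of Lemma~\ref{lemma-intersection odd}, while imposing in addition the constraints that encode $\cN_2 \subset \Sigma$. I would fix a basis of the $8$-dimensional vector space over $\FF_q$ so that $\ccQ_7^+ = \varepsilon_3^{\spin}(\Delta_3)$ has Gram matrix $C = \left(\begin{smallmatrix} 0 & I \\ I & 0 \end{smallmatrix}\right)$ with $I$ the $4 \times 4$ identity matrix. The $29$ defining relations for $\cN_2$ given in the excerpt translate, via the Veronese correspondence, into the displayed linear relations on the coefficients $a_{i,j}$ of $\Sigma$, and these are precisely the admissibility conditions on the Gram matrix of $\ccQ_\Sigma$.

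First, I would check that the characteristic-free Lemma~\ref{intersection1} gives, for any $\ccQ_\Sigma$ not containing a generator of $\ccQ_7^+$, the bound $|\ccQ_\Sigma \cap \ccQ_7^+| \le 2q^5 + q^4 + q^3 + 2q^2 + q + 1$, which for every $q \ge 2$ is strictly smaller than $2q^5 + q^4 + 2q^3 + q^2 + q + 1$. Hence the maximum must be attained by an admissible $\ccQ_\Sigma$ sharing a generator with $\ccQ_7^+$. In that case I would write the Gram matrix of $\ccQ_\Sigma$ in the block form $S = \left(\begin{smallmatrix} 0 & M \\ M^T & B\end{smallmatrix}\right)$ and set up the parametric system analogous to~\eqref{intersection-eq}; classifying its solutions exactly as in the odd case yields
\[
|\ccQ_\Sigma \cap \ccQ_7^+| = \frac{(qN_0 - N)q^{n-1}}{q-1} + \frac{(q^{n-1}+1)(q^{n+1}-1)}{q-1},
\]
where $N$ and $N_0$ have the same meaning as in Lemma~\ref{lemma-intersection odd}.

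The main obstacle is to show that the $\cN_2$-admissibility conditions rule out the extremal configuration~(\ref{i:c}) of Lemma~\ref{lemma-intersection odd}: I would verify, by direct comparison with the listed relations on the $a_{i,j}$, that no admissible pair $(M,B)$ can have $M$ with two distinct eigenspaces of dimensions $n$ and $1$. Once this is established, the next-best admissible configuration is~(\ref{ii:c}), for which $(N_0, N) = (2q^n - q^{n-1} - 1,\, q^{n+1} - 1)$ and consequently $g(N_0, N) = q^n - 1$. Substituting $n = 3$ into the displayed formula gives
\[
|\ccQ_\Sigma \cap \ccQ_7^+| = (q^3 - 1)q^2 + (q^2 + 1)^2(q + 1) = 2q^5 + q^4 + 2q^3 + q^2 + q + 1,
\]
matching the claim. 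I would conclude by exhibiting an explicit admissible pair $(M, B)$ realising~(\ref{ii:c}), thereby showing that this bound is attained. The discrepancy $q^2(q-1)$ between this value and the odd-characteristic maximum is precisely accounted for by the fact that the pencil $\langle \ccQ_\Sigma, \ccQ_7^+\rangle$ no longer contains the richer $2$-hyperplane degeneration available in odd characteristic, but only the milder one corresponding to configuration~(\ref{ii:c}).
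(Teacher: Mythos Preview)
Your overall strategy coincides with the paper's, but two points deserve attention.

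First, the symmetric block form $S=\left(\begin{smallmatrix}0&M\\M^T&B\end{smallmatrix}\right)$ does not survive the passage to characteristic~$2$: for a symmetric matrix $S$ one has $x^TSx=\sum_i s_{ii}x_i^2$ in even characteristic, so this parametrises only diagonal forms. The paper avoids this by writing the equation of $\ccQ_\Sigma$ directly as $Y^TM^TX+\sum_{1\le i\le j\le 4}a_{i,j}x_ix_j=0$ (with $M$ an \emph{unsymmetric} $4\times4$ matrix of cross-coefficients), and then substituting $\bar M=JM^T$, $\bar Y^T=Y^TJ$ with $J$ the antidiagonal matrix to recover a system formally identical to~\eqref{intersection-eq} subject to the extra constraints $\trace(\bar M)=0$ and $a_{2,3}=a_{1,4}$.

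Second, the exclusion of configuration~(\ref{i:c}) is not a matter of brute ``direct comparison'': the decisive $\cN_2$-relation is $a_{1,8}+a_{2,7}+a_{3,6}+a_{4,5}=0$, which is precisely $\trace(\bar M)=0$. If $\bar M$ had eigenspaces of dimensions $3$ and $1$, it would be similar to $\diag(\lambda_1,\lambda_2,\lambda_2,\lambda_2)$ with $\lambda_1\neq\lambda_2$; but then $\trace(\bar M)=\lambda_1+3\lambda_2=\lambda_1+\lambda_2=0$ in characteristic~$2$ forces $\lambda_1=\lambda_2$, a contradiction. This trace argument, rather than the pencil-degeneration heuristic you mention at the end, is what actually rules~(\ref{i:c}) out; once it is in place, configuration~(\ref{ii:c}) gives the stated maximum exactly as you compute.
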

\begin{proof}
Suppose the pointset of $\varepsilon_3^{\spin}(\Delta_3)$ to
be that of the hyperbolic quadric $\ccQ^+_7$ of  equation
$x_1x_8+x_2x_7+x_3x_6+x_4x_5=0$.
As the bound of Lemma~\ref{intersection1} holds also in
even characteristic, we
need to consider
those hyperplanes $\Sigma=\nu_{2^3}(\ccQ_{\Sigma})$ of $\langle \cV_2\rangle$
containing $\cN_2$ and
corresponding to quadrics $\ccQ_{\Sigma}$ of the
$7$--dimensional projective space
$\langle \varepsilon_3^{\spin}(\Delta_3)\rangle$
with at least one generator in
common with
$\ccQ^+_7$. In particular, we can assume $\Sigma$ to have
equation \[\sum_{1\leq i\leq j\leq 8} a_{i,j}x_{i,j}=0,\]
where the coefficients $a_{i,j}$ satisfy
\[\left\{
\begin{array}{lll}
a_{1,4}=a_{2,3}, &
a_{1,6}=a_{2,5}, &
a_{1,7}=a_{3,5}, \\
a_{2,8}=a_{4,6}, &
a_{3,8}=a_{4,7}, \\
\multicolumn{3}{l}{a_{1,8}+a_{2,7}+a_{3,6}+a_{4,5}=0,}\\
{a_{i,j}=0} & \multicolumn{2}{l}{\text{when $5\leq i\leq j$}.}
\end{array}
\right. \]
Hence, the quadric $\ccQ_{\Sigma}$ has equation
$\sum_{1\leq i\leq j\leq 8} a_{i,j}x_ix_j=0$,
with the coefficients $a_{i,j}$ fulfilling the previous conditions.
Thus, $\cN_2$ is contained in the hyperplane $\Sigma=\nu_{2^3}(\ccQ_{\Sigma})$,
while $\ccQ_{\Sigma}\cap \ccQ^+_7$ contains
the $3$--dimensional projective space of equations $x_1=x_2=x_3=x_4=0$.

Rewrite the equation of $\ccQ_{\Sigma}$ in a more compact form as
\[Y^TM^TX+ \sum_{1\leq i\leq j\leq 4} a_{i,j}x_ix_j=0,\]
where
\[X=\begin{pmatrix}
x_1\\
x_2\\
x_3\\
x_4\\
\end{pmatrix},\,\, Y=\begin{pmatrix}
x_5\\
x_6\\
x_7\\
x_8\\
\end{pmatrix},
M=\begin{pmatrix}
a_{1,5}& a_{1,6}&a_{1,7}&a_{1,8}\\
a_{1,6}& a_{2,6}&a_{2,7}&a_{2,8}\\
a_{1,7}& a_{3,6}&a_{3,7}&a_{3,8}\\
a_{4,5}& a_{2,8}&a_{3,8}&a_{4,8}\\
\end{pmatrix} \]
with $a_{1,8}+a_{2,7}+a_{3,6}+a_{4,5}=0$ and $a_{1,4}=a_{2,3}$.

We can also write the equation
of $\ccQ^+_7$ as $Y^TJX=0$ with $J=\begin{pmatrix}
0& 0&0&1\\
0& 0&1&0\\
0& 1&0&0\\
1& 0&0&0\\
\end{pmatrix}$.

Arguing as in the proof of Lemma~\ref{lemma-intersection odd},
let $\langle\begin{pmatrix}
X\\
Y\\
\end{pmatrix}\rangle$ be a point
of $\PG(7,q)$.
Then, $\langle\begin{pmatrix}
X\\
Y\\
\end{pmatrix}\rangle\in \ccQ_{\Sigma}\cap\ccQ^+_7$ if, and only if,
\begin{equation}\label{intersection-eq-even}
\left\{\begin{array}{l}
Y^TJX=0\\
Y^TM^TX+ \sum_{1\leq i\leq j\leq 4} a_{i,j}x_ix_j=0,\\
\end{array}\right.
\end{equation}
where $J$, $M$ are as previously defined.
\par
Since $J^2=I$, if we put $\bar{M}:=JM^T$ and $\bar{Y}^T:=Y^T J$,
System \eqref{intersection-eq-even} becomes as follows,
where we have also included the conditions on the coefficients $a_{i,j}$:
\begin{equation}\label{intersection-eq-evenbis}
\left\{\begin{array}{l}
\bar{Y}^TX=0\\
\bar{Y}^T \bar{M}X+\sum_{1\leq i\leq j\leq 4} a_{i,j}x_ix_j=0\\
\trace(\bar{M})=0, \,\,a_{2,3}=a_{1,4}.\\
\end{array}\right.
\end{equation}
System \eqref{intersection-eq-evenbis} is the analogue
of System \eqref{intersection-eq} in
Lemma~\ref{lemma-intersection odd}  for $n=3$,
with the further restrictions $\trace(\bar{M})=0$ and $a_{2,3}=a_{1,4}$.
Hence, it is possible to perform the same analysis
as before,
in order to determine the number of its solutions. The
maximum is achieved
when $\bar{M}$ admits a unique eigenspace of dimension
$4$, as in Cases  (\ref{o:c}) and (\ref{ii:c})
of Lemma~\ref{lemma-intersection odd}.
This means that $\bar{M}$ is similar to a diagonal matrix
$\diag(\lambda_1,\lambda_1,\lambda_1,\lambda_1)$, hence
$\trace(\bar{M})=\trace(\diag(\lambda_1,\lambda_1,\lambda_1,\lambda_1))=
4\lambda_1=0$ and the trace condition is satisfied.

Furthermore, if the coefficients $a_{i,j}$
in $\sum_{1\leq i\leq j\leq 4} a_{i,j}x_ix_j=0$ are all
$0$, then $\ccQ_{\Sigma}=\ccQ^+_7$;
this is the analogue of Case (\ref{o:c}) of Lemma~\ref{lemma-intersection odd}.
Note that  Case (\ref{i:c})
of Lemma~\ref{lemma-intersection odd}
cannot happen, as if $\bar{M}$ were to admit two
eigenspaces of dimensions respectively $1$ and $3$,
then it would  be similar to a
diagonal matrix
$\diag(\lambda_1,\lambda_2,\lambda_2,\lambda_2)$, $\lambda_1\not= \lambda_2$.
However,
$\trace(\bar{M})=\trace(\diag(\lambda_1,\lambda_2,\lambda_2,\lambda_2))=
\lambda_1+3\lambda_2=0$ gives $\lambda_1=\lambda_2$ --- a contradiction.

When the vectors satisfying the equation
$\sum_{1\leq i\leq j\leq 4} a_{i,j}x_ix_j=0$ represent
points lying on two distinct planes of a $3$--dimensional projective space,
we have the analogue of Case (\ref{ii:c})
of Lemma~\ref{lemma-intersection odd} and this achieves the maximum
intersection size.
\par
We have thus shown that
the maximum value $m$ for $|\ccQ_{\Sigma}\cap\ccQ^+|$ is attained for
% corresponds actually to
%Case (\ref{c:aa}),
$\bar{M}$ similar to the diagonal matrix
$\diag(\lambda_1,\lambda_1,\lambda_1,\lambda_1)$
and $m=2q^5+q^4+2q^3+q^2+q+1$.
\qed\end{proof}

\begin{theo}\label{n=k=3, even}
For $q$ even,
the code $\cC^{gr}_{3,3}$ is a $[N,K,d]_q$--linear
code with 
\[N=(q^3+1)(q^2+1)(q+1),\qquad K=28,\qquad d=q^5(q-1).\]
\end{theo}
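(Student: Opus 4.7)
My plan is to assemble the three parameters by combining results already established in the excerpt and finishing with a short arithmetic check. The length $N$ follows directly from Lemma~\ref{lemma-parametri} specialised to $k=n=3$: the product $\prod_{i=0}^{2}(q^{2(3-i)}-1)/(q^{i+1}-1)$ telescopes to $(q^3+1)(q^2+1)(q+1)$. The dimension $K=28$ is part~(\ref{emb2:pt1}) of Theorem~\ref{emb2} evaluated at $k=n=3$, namely $\binom{7}{3}-\binom{7}{1}=35-7=28$.

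The core point is the minimum distance, and here I plan to invoke~\eqref{eq-codes}: it suffices to determine
\[ m'=\max\bigl\{|\varepsilon_3^{gr}(\Delta_3)\cap\Pi|:\Pi\text{ a hyperplane of }\langle\varepsilon_3^{gr}(\Delta_3)\rangle\bigr\}, \]
and then set $d=N-m'$. The translation from $m'$ to a quadric-intersection question in $\PG(7,q)$ is exactly what the preceding lemma accomplishes: using the isomorphism $\varepsilon_3^{gr}\cong\varepsilon_3^{vs}/\cN_2$ from part~(\ref{emb2:pt3}) of Theorem~\ref{emb2}, the hyperplanes of $\langle\varepsilon_3^{gr}(\Delta_3)\rangle$ lift bijectively to hyperplanes $\Sigma$ of $\langle\varepsilon_3^{vs}(\Delta_3)\rangle$ containing $\cN_2$; since any line through two distinct points of $\varepsilon_3^{gr}(\Delta_3)$ is skew to $\cN_2$, cardinalities are preserved, and the Veronese correspondence identifies each such count with $|\ccQ_\Sigma\cap Q_7^+|$ for the quadric $\ccQ_\Sigma$ of $\PG(7,q)$ associated to $\Sigma$. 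The preceding lemma then yields $m'=m=2q^5+q^4+2q^3+q^2+q+1$.

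The final step is routine arithmetic. Expanding, $N=(q^3+1)(q^2+1)(q+1)=q^6+q^5+q^4+2q^3+q^2+q+1$, so
\[ d=N-m=q^6+q^5+q^4+2q^3+q^2+q+1-(2q^5+q^4+2q^3+q^2+q+1)=q^6-q^5=q^5(q-1), \]
as claimed.

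I expect no real obstacle in this particular proof: all of the hard work, namely the explicit description of $\cN_2$ via the results of~\cite{CP2}, the reduction to quadric intersections through $\nu_{2^3}$, and the case analysis producing $m$, has already been performed in the preceding lemma. The theorem is essentially a consolidation step that plugs the value of $m$ into~\eqref{eq-codes} once the parameter identifications of length and dimension have been recorded.
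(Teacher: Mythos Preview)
Your proposal is correct and follows exactly the approach the paper intends: the paper states Theorem~\ref{n=k=3, even} without an explicit proof, treating it as an immediate consequence of the preceding lemma together with Lemma~\ref{lemma-parametri} and Theorem~\ref{emb2}, just as the odd-characteristic analogue (Theorem~\ref{n=k=3, odd}) was deduced from Lemma~\ref{lemma-intersection odd} and~\eqref{eq-codes}. Your write-up simply makes the routine arithmetic explicit.
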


Theorem~\ref{n=k=3, even} is part (\ref{mt2:ii})
of Main Result~\ref{main2} for $q$ even.

\section{Projective and polar caps}
\label{sec:2}
In this section $\F$ is an arbitrary, possibly infinite, field.
A \emph{projective cap} of $\PG(n,\F)$ is a set $\cC$ of points of $\PG(n,\F)$
which is met by no line of $\PG(n,\F)$ in more than $2$ points.
A  generalisation to an arbitrary
point--line geometry $\Gamma=(\cP,\cL)$ is as follows: an
\emph{$(m,v)$--set}
$\cC\subseteq\cP$ is a set of $m$ points which is met by any $\ell\in\cL$ in
\emph{at most} $v$ points.

Clearly, when $\Gamma$ is a linear space, non--trivial $(m,v)$--sets can exist
only for $v\geq 2$; however,
when not all of the points of $\Gamma$ are collinear,
$(m,1)$--sets are also interesting (consider, for instance,
the case of ovoids of  polar spaces).
\par
In the present section we shall be dealing exclusively
with $(m,2)$--sets, henceforth called in brief $m$--caps.
When $\Gamma=\PG(r,\F),\G_k$ or $\Delta_k$ we speak respectively of
projective, Grassmann or polar $m$--caps.

In Theorem~\ref{thm-cap}, it will be shown
that  the whole pointset of
a dual polar space $\Delta_n$ is mapped by the Grassmann
embedding into  a projective cap,
even if, clearly, the
full pointset $\Delta_k$ for any $k\leq n$ could never be
a polar cap of itself, as $\Delta_k$, for $n>1,$ contains lines.
For $k<n$, the Grassmann embedding $\varepsilon_k^{gr}$ is a
projective embedding, that is it maps lines of $\Delta_k$ onto projective lines;
thus, $\varepsilon_k^{gr}(\Delta_k)$ cannot be a cap.
However, in
Theorem~\ref{grassmann projective} and
Corollary~\ref{polar projective} we shall show
that Grassmann and polar caps are mapped by $\varepsilon_k^{gr}$ into
projective caps; see also~\cite{EMS} and~\cite{BCE} for
caps contained in classical varieties.
This is significant as,
when a geometry $\Gamma$ is projectively
embedded in a larger geometry, say $\Gamma'$
and not all the points of $\Gamma$ are
collinear, then there might be $m$--caps of $\Gamma$ which
are not inherited by $\Gamma'$.

\begin{theo}
\label{t21}
Let $1\leq k\leq n$.
If  $\cC$ is a polar $m$--cap of $\Delta_k$, then $\cC$
is a Grassmann $m$--cap of $\G_k$.
\end{theo}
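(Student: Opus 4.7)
The plan is to show directly that any line $\ell_{X,Y}$ of $\G_k$ meets the subset $\cC\subseteq\Delta_k$ in at most two points. I will split into two cases according to whether $\ell_{X,Y}$ is contained in $\Delta_k$ or not. If the line is contained in $\Delta_k$, then by the description of $\Delta_k$-lines recalled in Section~\ref{Introduction} the line is a line of $\Delta_k$ (this can happen only for $k<n$, when $Y$ is totally singular, since for $k=n$ the $(n+1)$-dimensional $Y$ cannot be totally singular), and the polar cap hypothesis immediately gives $|\ell_{X,Y}\cap\cC|\le 2$. So the whole task reduces to proving the following claim: if $\ell_{X,Y}$ is a $\G_k$-line not contained in $\Delta_k$, then $|\ell_{X,Y}\cap\Delta_k|\le 2$.

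To prove the claim I would first dispose of the case where $X$ itself is not totally singular: then no $k$-space $Z$ with $X\subset Z$ is totally singular, so the line is disjoint from $\Delta_k$. Assume therefore that $X$ is totally singular, and pass to the $(2(n-k+1)+1)$-dimensional quotient $X^\perp/X$, which carries a non-degenerate quadratic form induced by $\eta$. A $k$-subspace $Z$ containing $X$ lies in $\Delta_k$ if and only if $Z\subseteq X^\perp$ and $Z/X$ is a singular point of $X^\perp/X$. Now I would split on whether $Y\subseteq X^\perp$. If $Y\not\subseteq X^\perp$, then $Y\cap X^\perp$ has dimension $k$, so there is a unique $k$-subspace on $\ell_{X,Y}$ contained in $X^\perp$, giving $|\ell_{X,Y}\cap\Delta_k|\le 1$. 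If instead $Y\subseteq X^\perp$, then $Y/X$ is a projective line in $\PG(X^\perp/X)$, and the points of $\ell_{X,Y}\cap\Delta_k$ correspond to its intersection with the quadric defined by the induced form. This intersection consists either of the whole line (forcing $Y$ to be totally singular, which is excluded since $\ell_{X,Y}\not\subseteq\Delta_k$) or of at most two points, as required.

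The argument is essentially a case analysis, and I do not expect any genuine obstacle; the only subtlety is to handle $k=n$ uniformly, but there the first case simply does not arise because maximal totally singular subspaces have dimension $n<k+1$, so every $\G_n$-line automatically falls under the claim. Combining the two cases yields $|\ell_{X,Y}\cap\cC|\le|\ell_{X,Y}\cap\Delta_k|\le 2$ for every line $\ell_{X,Y}$ of $\G_k$, which is exactly the statement that $\cC$ is a Grassmann $m$-cap of $\G_k$.
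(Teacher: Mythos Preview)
Your proof is correct in substance and reaches the right conclusion, but it takes a different route from the paper's. The paper argues by contradiction: assuming three points $P_1,P_2,P_3\in\cC$ lie on a common $\G_k$-line, it works directly inside the $(k+1)$-space $S=\langle P_1,P_2,P_3\rangle$ and shows that every point of $S$ is singular (using that a line with three singular points must be totally singular), whence $S$ itself is totally singular; for $k<n$ this puts $P_1,P_2,P_3$ on a $\Delta_k$-line, contradicting the polar-cap hypothesis, while for $k=n$ it contradicts the Witt index. Your argument instead passes to the quotient $X^\perp/X$ and invokes the standard fact that a projective line meets a quadric in at most two points unless it is contained in it. Both approaches rest on the same quadric--line intersection principle; yours is more structural and treats all fields uniformly (the paper has to single out $\F_2$, where $S=P_1\cup P_2\cup P_3$), while the paper's stays entirely inside $V$ and avoids quotients.

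Two small points to tighten. First, when $Y\not\subseteq X^\perp$ and $k\ge 3$ it can happen that $\dim(Y\cap X^\perp)=k-1$ (namely $Y\cap X^\perp=X$), not only $k$; in that case \emph{no} $Z$ on the pencil lies in $X^\perp$, so you get zero rather than one point of $\Delta_k$, and your bound still holds. Second, in Case~1 you assert that a $\G_k$-line whose points all lie in $\Delta_k$ is automatically a line of the geometry $\Delta_k$; this is true, but it is not merely ``the description recalled in Section~\ref{Introduction}'' --- it is precisely the contrapositive of what your Case~2 analysis establishes (if $Y$ is not totally singular, at most two of the $q+1\ge 3$ points of $\ell_{X,Y}$ lie in $\Delta_k$). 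Splitting on whether $Y$ is totally singular, rather than on whether $\ell_{X,Y}\subseteq\Delta_k$, would remove this forward reference and make the logic transparent.
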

\begin{proof}
Let $P_1,P_2$ and $P_3$ be three distinct points of $\cC$.
By way of
contradiction, suppose $P_1$, $P_2$ and $P_3$ to be collinear in
$\G_k$. So, $P_1,P_2$ and $P_3$ are three $k$--dimensional
totally singular subspaces of $V$ with $\dim(P_1\cap P_2\cap
P_3)=k-1$ and $\dim\langle P_1,P_2,P_3\rangle=k+1$. Put $S:=\langle
P_1,P_2,P_3\rangle$.

If $\F=\F_2$, then $S=P_1\cup P_2\cup P_3$ is a singular subspace;
hence, $P_1$, $P_2$
and $P_3$ are collinear in $\Delta_k$. This contradicts the hypothesis.

If $\F\not=\F_2$, take $x\in S\setminus (P_1\cup P_2 \cup P_3)$ and
$y\in P_1\setminus (P_1\cap P_2\cap P_3)$.
The line $\langle x,y\rangle $ meets $P_2$ and $P_3$ in distinct points,
say
$y_2\in P_2\setminus P_3$ and $y_3\in P_3\setminus P_2$,
as each $P_i,\,1\leq i\leq 3$ is a hyperplane in $S$ and $x\not\in
(P_1\cup P_2 \cup P_3)$. Then, the line $\langle x,y\rangle$ has three
distinct singular points. Necessarily, $\langle x,y\rangle$ is a
singular line; thus, $x$ is a singular point and $S$ is a totally
singular subspace.

For $1\leq k<n$, this means that $P_1$, $P_2$ and $P_3$ are
collinear in $\Delta_k$, contradicting the hypothesis on $\cC$.

For $k=n$ we would have determined a totally singular subspace $S\leq V$
of dimension $n+1$.
This is, again, impossible, as the maximal singular subspaces
of $V$ have dimension $n$.
\qed\end{proof}

\begin{theo}
\label{grassmann projective}
Let $1\leq k\leq n$.
If $\cC$ is a Grassmann $m$--cap of $\G_k$, then $e_k^{gr}(\cC)$
 is a projective cap of $\PG(W_k)$.
\end{theo}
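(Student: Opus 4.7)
The plan is to prove the contrapositive: if three distinct points $P_1,P_2,P_3$ of $\G_k$ have images $e_k^{gr}(P_1),e_k^{gr}(P_2),e_k^{gr}(P_3)$ collinear on some projective line of $\PG(W_k)$, then $P_1,P_2,P_3$ already lie on a line of $\G_k$. Applied to the hypothesis that $\cC$ is a Grassmann $m$-cap, this immediately yields the projective $m$-cap property of $e_k^{gr}(\cC)$.

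I will use the classical description $P_i=\{v\in V:\;v\wedge w_i=0\}$, where $w_i$ is a decomposable $k$-vector representing $e_k^{gr}(P_i)$. Collinearity in $\PG(W_k)$ means $w_3=\alpha w_1+\beta w_2$ for scalars $\alpha,\beta\in\F^{\ast}$. A one-line computation then gives $P_1\cap P_2\subseteq P_3$; the crux is to bound $\dim(P_1\cap P_2)$ from below.

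To do so I will produce a vector $v\in P_3\setminus(P_1\cup P_2)$. Such a $v$ exists because otherwise the $k$-space $P_3$ would be the union of its two proper subspaces $P_3\cap P_1$ and $P_3\cap P_2$, which over an arbitrary field forces one of them to coincide with $P_3$; by equality of dimensions this gives $P_3=P_1$ or $P_3=P_2$, contradicting the assumption of distinctness. For such $v$, the identity $\alpha(v\wedge w_1)+\beta(v\wedge w_2)=0$ forces $v\wedge w_1$ and $v\wedge w_2$ to be proportional nonzero $(k+1)$-vectors, hence they represent the same $(k+1)$-subspace of $V$, namely $P_1+\langle v\rangle=P_2+\langle v\rangle$. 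In particular $\dim(P_1+P_2)\leq k+1$, and since $P_1\neq P_2$ we conclude $\dim(P_1\cap P_2)=k-1$.

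Setting $X:=P_1\cap P_2$ and $Y:=P_1+P_2$, we have $\dim X=k-1$, $\dim Y=k+1$, and $P_1,P_2\in\ell_{X,Y}$. To place $P_3$ on the same pencil, note that $X\subseteq P_3$ by the first step, so $P_3=X+\langle v\rangle$ for the vector $v$ chosen above; since $v\in P_1+\langle v\rangle=Y$, we obtain $P_3\subseteq Y$, whence $P_3\in\ell_{X,Y}$. Thus $P_1,P_2,P_3$ are collinear in $\G_k$, contradicting the Grassmann $m$-cap property of $\cC$. The main subtleties are the characteristic-free production of $v\in P_3\setminus(P_1\cup P_2)$ and the implication that proportional nonzero $(k+1)$-vectors represent the same subspace; both are standard and require no further hypothesis on $\F$.
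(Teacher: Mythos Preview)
Your proof is correct and takes a genuinely different route from the paper's. The paper argues indirectly via the global description of the Pl\"ucker image: $e_k^{gr}(\G_k)$ is cut out by quadrics, so a projective line meeting it in three points lies entirely on it, and then a cited result (\cite[Theorem~24.2.5]{HT91}) identifies every line of the Grassmann variety with a pencil $\ell_{X,Y}$. Your argument is instead an elementary and self-contained computation with wedge products: from $w_3=\alpha w_1+\beta w_2$ you directly extract $X=P_1\cap P_2$ of dimension $k-1$ and $Y=P_1+P_2$ of dimension $k+1$, and place $P_3$ on $\ell_{X,Y}$ by hand. The paper's approach is shorter on the page because it outsources the two key facts; yours avoids any external citation and works uniformly over every field, including $\F_2$, since the step ``a vector space is not the union of two proper subspaces'' needs no assumption on $|\F|$. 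Either way the content is the same classical statement that the lines lying on the Grassmann variety are exactly the embedded pencils; you have simply reproved the relevant special case rather than quoted it.
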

\begin{proof}
Let $P_1,P_2$ and $P_3$ be three distinct points of $\cC$. Put
$\bar{P}_1:=e_k^{gr}(P_1)$, $\bar{P}_2:=e_k^{gr}(P_2)$ and
$\bar{P}_3:=e_k^{gr}(P_3)$.  By way of contradiction, suppose
$\bar{P}_1$, $\bar{P}_2$ and $\bar{P}_3$ to be collinear in
$\PG(W_k)$.
The image $e_k^{gr}(\G_k)$ of the Pl\"ucker embedding
$e_k^{gr}$ of $\G_k$ is the intersection of (possibly degenerate)
quadrics of $\PG(W_k)$. Since, by assumption, the projective line
$\langle \bar{P}_1, \bar{P}_2 \rangle$ meets $e_k^{gr}(\G_k)$ in
three distinct points $\bar{P}_1$, $\bar{P}_2$ and $\bar{P}_3$, we
have $\langle \bar{P}_1, \bar{P}_2 \rangle\subseteq
e_k^{gr}(\G_k)$, that is  $\bar{P}_1$, $\bar{P}_2$ and $\bar{P}_3$ are
on a line of $e_k^{gr}(\G_k)$. By~\cite[Theorem 24.2.5]{HT91},
$P_1$, $P_2$ and $P_3$ should be on a line of
$\G_k$ and, thus, collinear in $\G_k$ --- a contradiction.
\qed\end{proof}

\begin{co}\label{polar projective}
Let $1\leq k\leq n$.
If  $\cC$ is a polar $m$--cap of $\Delta_k$,
then $\varepsilon_k^{gr}(\cC)$ is a projective $m$--cap of $\PG(W_k^{gr})$.
\end{co}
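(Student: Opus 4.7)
The plan is to chain together the two theorems just proved. First, I would invoke Theorem~\ref{t21} to promote the polar $m$--cap $\cC\subseteq\Delta_k$ to a Grassmann $m$--cap of $\G_k$: since every line of $\G_k$ which is contained in the point--set of $\Delta_k$ is a line of $\Delta_k$ (the subtler point, already handled in Theorem~\ref{t21}, is to rule out collinearity in $\G_k$ when the three points span a $(k+1)$--dimensional totally singular subspace), no line of $\G_k$ can meet $\cC$ in more than two points.

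Next I would apply Theorem~\ref{grassmann projective} to the Grassmann $m$--cap $\cC\subseteq\G_k$, which gives that $e_k^{gr}(\cC)$ is a projective cap of $\PG(W_k)$. Since $\varepsilon_k^{gr}=e_k^{gr}|_{\Delta_k}$ and $\cC\subseteq\Delta_k$, we simply have $\varepsilon_k^{gr}(\cC)=e_k^{gr}(\cC)$, and the injectivity of $\varepsilon_k^{gr}$ (being an embedding) preserves the cardinality $m$.

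Finally, to conclude that $\varepsilon_k^{gr}(\cC)$ is a projective $m$--cap of the smaller ambient space $\PG(W_k^{gr})=\langle\varepsilon_k^{gr}(\Delta_k)\rangle$ rather than merely of $\PG(W_k)$, I would use the trivial observation that the cap property is inherited by subspaces: any line of $\PG(W_k^{gr})$ is a line of $\PG(W_k)$, and hence meets $\varepsilon_k^{gr}(\cC)$ in at most two points. There is no real obstacle here; the corollary is a formal consequence of Theorems~\ref{t21} and~\ref{grassmann projective}, and the only bookkeeping is to notice the identification $\varepsilon_k^{gr}(\cC)=e_k^{gr}(\cC)$ and that the cap condition is preserved under passage to a subspace.
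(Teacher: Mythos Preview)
Your proposal is correct and matches the paper's approach: the paper states this result as an immediate corollary of Theorems~\ref{t21} and~\ref{grassmann projective} without giving a separate proof, and your chaining of the two theorems together with the observations that $\varepsilon_k^{gr}=e_k^{gr}|_{\Delta_k}$ and that the cap property is inherited by subspaces is exactly what is intended.
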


\begin{theo} \label{thm-cap} The image $\varepsilon_n^{gr}(\Delta_n)$
of the dual polar space $\Delta_n$ under the Grassmann embedding
$\varepsilon_n^{gr}$ is a projective cap of $\PG(W_n^{gr})$.
\end{theo}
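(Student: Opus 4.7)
The strategy is to argue by contradiction, adapting the two-step reasoning of Theorems~\ref{t21} and~\ref{grassmann projective} to the case $k=n$. Assume three pairwise distinct points $P_1,P_2,P_3$ of $\Delta_n$ have Grassmann images $\bar P_i:=\varepsilon_n^{gr}(P_i)$ collinear in $\PG(W_n^{gr})$; the proof then branches on whether or not the $P_i$ lie on a common line of $\Delta_n$.

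If $P_1,P_2,P_3$ are contained in a common line $\ell$ of $\Delta_n$, I would exploit the fact (recalled right after the statement of Theorem~\ref{emb1}) that for $k=n$ the map $\varepsilon_n^{gr}$ is a Veronese embedding sending every line of $\Delta_n$ onto a non--singular conic of $\PG(W_n)$. The three collinear points $\bar P_i$ would then lie on such a conic, contradicting the fact that a non--singular conic contains no three collinear points.

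Otherwise, no line of $\Delta_n$ contains the three $P_i$. The key is then to pass through the ambient Pl\"ucker embedding $e_n^{gr}\colon \G_n\to \PG(W_n)$: the projective line $\langle \bar P_1,\bar P_2\rangle$ meets $e_n^{gr}(\G_n)$ in the three points $\bar P_1,\bar P_2,\bar P_3$, and since $e_n^{gr}(\G_n)$ is an intersection of (possibly degenerate) quadrics, the whole line lies in $e_n^{gr}(\G_n)$. Just as in the proof of Theorem~\ref{grassmann projective}, \cite[Theorem 24.2.5]{HT91} then forces this line to be the Pl\"ucker image of a line of $\G_n$ containing $P_1,P_2,P_3$. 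I would close with the very argument that ends the proof of Theorem~\ref{t21} for $k=n$: setting $S:=\langle P_1,P_2,P_3\rangle$, which has dimension $n+1$, the case distinction on $|\F|$ given there (trivial set-theoretic union argument when $\F=\F_2$, and the ``three singular points on a line force the whole line to be singular'' argument when $\F\neq\F_2$) shows that $S$ is totally singular, contradicting that the maximal totally singular subspaces of $V$ have dimension $n$.

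The main---and essentially the only---obstacle is organising the two cases cleanly, since both technical pieces (the Veronese character of $\varepsilon_n^{gr}$ on lines of $\Delta_n$ and the Pl\"ucker-image argument preventing three distinct points from being collinear) are already available in the paper. Neither Theorem~\ref{t21} nor Theorem~\ref{grassmann projective} alone suffices: the former starts from a hypothesis ($\cC$ being a polar cap) that fails here, while the latter handles only embeddings in projective (not Veronese) regime, so the split into the two cases above is precisely what is needed to bridge the two.
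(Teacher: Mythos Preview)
Your proof is correct, and its core is the same as the paper's: the Pl\"ucker/quadric argument forces $P_1,P_2,P_3$ onto a line of $\G_n$, which then yields an impossible totally singular $(n+1)$--dimensional subspace. The paper, however, does \emph{not} split into cases. It goes straight to the Pl\"ucker argument and concludes that three points of $\Delta_n$ on a line of $\G_n$ give a contradiction---and this single chain already covers your Case~1, since if the $\bar P_i$ are collinear then the $P_i$ lie on a line of $\G_n$ regardless of whether they also lie on a line of $\Delta_n$. So your Case~1 (invoking the Veronese/conic property of $\varepsilon_n^{gr}$ on lines of $\Delta_n$) is valid but redundant, and your claim that ``the split into the two cases above is precisely what is needed'' is not accurate: the paper's uniform argument shows no split is required. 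On the other hand, you supply more detail for the final impossibility (the $\F=\F_2$ vs.\ $\F\neq\F_2$ argument from Theorem~\ref{t21}), whereas the paper simply asserts that the configuration ``is, clearly, impossible.''
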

\begin{proof}
We prove that $\varepsilon_n^{gr}(\Delta_n)$ does not contain any three
collinear points.  By way of contradiction, suppose
$\varepsilon_n^{gr}(P_1)$, $\varepsilon_n^{gr}(P_2)$ and
$\varepsilon_n^{gr}(P_3)$ to be three collinear points in
$\PG(W_n^{gr})$ and put $\ell:=\langle \varepsilon_n^{gr}(P_1),
\varepsilon_n^{gr}(P_2)\rangle$. The image $e_n^{gr}(\G_n)$ of the
projective Grassmannian $\G_n$ by the Pl\"ucker embedding $e_n^{gr}$ is
a variety obtained as the intersection of (possibly degenerate)
quadrics of $\PG(W_n)$.
Since $\ell$ is a projective line containing three points of
$e_n^{gr}(\G_n)$, then $\ell\subset e_n^{gr}(\G_n)$.
By~\cite[Theorem 24.2.5]{HT91}, its pre--image $r=(e_n^{gr})^{-1}(\ell)$
is a line
of $\G_n$. Hence, $P_1,$ $P_2$ and $P_3$ are three distinct
points of $\Delta_n$ lying on the line $r$ of
$\G_n$. This means that there are three distinct  maximal
subspaces $p_1,p_2$ and $p_3$ of $V,$ totally singular with
respect to $\eta$, intersecting in a
$(n-1)$--dimensional subspace and spanning a $(n+1)$--dimensional
subspace of $V$. This configuration is, clearly, impossible.
\qed\end{proof}

Main Result~\ref{main3} is a consequence of Corollary~\ref{polar projective}
and Theorem~\ref{thm-cap}.
\par
As recalled in Section~\ref{Sect-1.1},
when $\F=\F_q$, the pointset of the dual polar
space $\Delta_n$ is the set of all $(q^n+1)(q^{n-1}+1)\cdots (q+1)$
$n$--dimensional subspaces of $V$ totally singular with respect to
$\eta$. Thus, we get the following corollary.
\begin{co}\label{cor-cap}
Suppose $n\geq 2$ and $\F=\F_q$ a finite field. Then,
\begin{enumerate}[(i)]
\item\label{c1p1}
  For $q=p^h$, $p>2$, the pointset $\varepsilon_n^{gr}(\Delta_n)$ is
  a cap of $\PG({{2n+1}\choose n}-1,q)$ of size $(q^n+1)(q^{n-1}+1)\cdots
  (q+1)$.
\item\label{c1p2}
  For $q=2^h$, the pointset $\varepsilon_n^{gr}(\Delta_n)$ is a cap
  of $\PG({{2n+1}\choose n}-{{2n+1}\choose {n-2}}-1,q)$ of size
  $(q^n+1)(q^{n-1}+1)\cdots (q+1)$.
\end{enumerate}
\end{co}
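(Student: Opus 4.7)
The plan is to assemble the corollary directly from results already established in the paper, so no genuinely new argument is required. First I would invoke Theorem~\ref{thm-cap}, which asserts that $\varepsilon_n^{gr}(\Delta_n)$ is a projective cap of $\PG(W_n^{gr})$ over an arbitrary field; specialising $\F=\F_q$ immediately gives the cap property in both (\ref{c1p1}) and (\ref{c1p2}). The remaining work is therefore just to identify the ambient projective space and count the points.

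For the cardinality, since $\varepsilon_n^{gr}$ is an embedding, it is injective, so $|\varepsilon_n^{gr}(\Delta_n)|$ equals the number of totally singular $n$-subspaces of $V$. The pointset formula recalled in Section~\ref{Sect-1.1} gives $|\Delta_n|=\prod_{i=0}^{n-1}\frac{q^{2(n-i)}-1}{q^{i+1}-1}$. A short manipulation (substitute $j=n-i$ in the numerator, factor $q^{2j}-1=(q^j-1)(q^j+1)$, and observe that the $(q^j-1)$ factors cancel with the re-indexed denominator $\prod_{k=1}^n (q^k-1)$) reduces this to $(q^n+1)(q^{n-1}+1)\cdots(q+1)$, matching the claimed count in both parts.

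For the ambient space, I would split by characteristic. When $q$ is odd, part~(\ref{emb1:pt1}) of Theorem~\ref{emb1} yields $\dim(\varepsilon_n^{gr})=\binom{2n+1}{n}$, so $\PG(W_n^{gr})=\PG(\binom{2n+1}{n}-1,q)$, giving~(\ref{c1p1}). When $q$ is even, part~(\ref{emb2:pt1}) of Theorem~\ref{emb2} gives $\dim(\varepsilon_n^{gr})=\binom{2n+1}{n}-\binom{2n+1}{n-2}$, so $\PG(W_n^{gr})=\PG(\binom{2n+1}{n}-\binom{2n+1}{n-2}-1,q)$, giving~(\ref{c1p2}).

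There is essentially no obstacle: the substantive content has already been proved in Theorem~\ref{thm-cap} (the cap property) and in Theorems~\ref{emb1}--\ref{emb2} (the dimensions). The corollary is a three-line consequence, and the only thing to double-check is the arithmetic simplification of the Gaussian-binomial style product into $(q+1)(q^2+1)\cdots(q^n+1)$, which is entirely routine.
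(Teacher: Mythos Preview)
Your proposal is correct and follows essentially the same route as the paper: invoke Theorem~\ref{thm-cap} for the cap property, then use Theorem~\ref{emb1}(\ref{emb1:pt1}) and Theorem~\ref{emb2}(\ref{emb2:pt1}) to identify $\PG(W_n^{gr})$ in the two characteristics. The paper's proof is even terser---it does not spell out the simplification of the product formula for $|\Delta_n|$---but the content is identical.
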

\begin{proof}
By Theorem~\ref{thm-cap},  $\varepsilon_n^{gr}(\Delta_n)$
is a cap of $\PG(W_n^{gr})$.
Part (\ref{c1p1}) of the corollary  follows from Part (\ref{emb1:pt1}) of
Theorem~\ref{emb1}.
Part (\ref{c1p2}) follows from Part (\ref{emb2:pt1})
of Theorem~\ref{emb2}.
\qed\end{proof}
We remark that Part (\ref{c1p1}) of Corollary~\ref{cor-cap} can
also be proved using Part (\ref{emb1:pt2}) of Theorem~\ref{emb1}
together with the well--known result of~\cite{TVM2003} showing that the
quadric Veronesean of $\PG(n,q)$ is a cap of $\PG(n(n+3)/2,q).$  
%$\theta_n$--cap of
%$\PG(n(n+3)/2,q)$, with $\theta_n=(q^{n+1}-1)/(q-1)$.

\section{Construction of a polar cap of $\Delta_k$}
\label{construction}
In this section $\FF$ can be any, possibly infinite,
 field of odd characteristic.
We shall determine a family of $k$--dimensional
subspaces of $V$ totally singular with respect to $\eta$
providing a polar cap of $\Delta_k$, for $k\leq n$.
Observe that the caps we construct in this section
all actually fulfil the stronger condition $v=1$, that is
no $2$ of their points are on a line of $\Delta_k$;
furthermore, as all of the results of Section~\ref{sec:2} for $v\leq2$
apply, they determine caps
of the ambient projective space  by Theorem~\ref{polar projective}.

Up to a multiplicative non--zero constant, it is possible
to choose without loss of
generality a basis
${\mathbb B}=(e_1,e_2,\ldots,e_{2n+1})$
for $V$ in which the quadratic form $\eta$ is
given by
\[\eta(x_1,\ldots , x_{2n+1}) = \sum_{i=1}^nx_ix_{n+i} + x_{2n+1}^2.\]
Denote by $f_{\eta}$ the symmetric bilinear form obtained by polarising $\eta$
and by $\perp$ the  associated  orthogonality relation.
Given  $I:=\{1,\ldots,2n+1\}$, write ${I\choose k}$ for the set of
all $k$--subsets of $I$.

For any set of indices
$J=\{j_1,j_2,\ldots,j_k\}\subset I,\, j_1<j_2<\ldots< j_k,$
define
$e_J=e_{j_1}\wedge e_{j_2}\wedge\cdots \wedge e_{j_k}$.
The set
$B_{\wedge}:=(e_J)_{J\in {I\choose k}}$ is, clearly,
a basis of $W_k$.
For any $i$,
 with $1\leq i\leq 2n+1$, let
\[ i':=\begin{cases}
  i+n & \text{ if $1\leq i\leq n$}\\
  i-n & \text{ if $n< i\leq 2n$ } \\
  2n+1 & \text{ if $i=2n+1$. }
\end{cases} \]
Observe that, with $f_{\eta}$ defined
as above and $1\leq i\leq 2n$, we always get $f_{\eta}(e_i,e_{i'})=1$.
Thus, the pair $\{e_i,e_{i'}\}$ is a \emph{hyperbolic pair of vectors};
see~\cite[Chapter 3]{Ar}.
We shall say
that $\{i,i'\}$ is a \emph{hyperbolic pair of indices}
if the corresponding set $\{e_i,e_{i'}\}$ is
a hyperbolic pair of vectors.

\begin{lemma}\label{hyperbolic pairs}
Let $k\leq n$ and $r\leq \lfloor\frac{k}{2}\rfloor$. Suppose $J$ to be a
$k$--subset of $I$ containing $r$ hyperbolic pairs of
indices.
The following statements hold:
\begin{enumerate}[(1)]
\item\label{e:c1} If $2n+1\not\in J$, then there exists
$\{m_1,m_2,\ldots,m_r\}\subseteq \{1,2,\ldots,n\}$ such that $e_{m_i}\in
\{e_{j}\}_{j\in J}^{\perp}$ for every $1\leq i\leq r$.
\item\label{e:c2} If $2n+1\in J$, then there exists
$\{m_1,m_2,\ldots,m_r,\ell\}\subseteq \{1,2,\ldots,n\}$ such that $e_{t}
\in \{e_{j}\}_{j\in J}^{\perp}$ for every $t\in\{m_1,\ldots,m_r\}\cup\{\ell\}$.
\end{enumerate}
\end{lemma}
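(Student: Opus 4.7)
The plan is to read off, for any basis vector $e_m$ with $m\in\{1,\dots,n\}$, the precise condition under which $e_m$ lies in $\{e_j\}_{j\in J}^{\perp}$, and then do a short counting argument exploiting the assumption $k\le n$.

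First I would record the orthogonality table. Since $f_\eta(e_i,e_{i'})=1$ for $1\le i\le 2n$ and $f_\eta(e_i,e_j)=0$ for all other pairs with $1\le i,j\le 2n$, and since $f_\eta(e_m,e_{2n+1})=0$ for every $m\le n$, a vector $e_m$ with $m\in\{1,\dots,n\}$ is orthogonal to the whole family $(e_j)_{j\in J}$ if and only if $m+n\notin J$. Thus the search for admissible indices reduces to finding elements of $\{1,\dots,n\}$ whose ``partners'' $m+n$ avoid $J$.

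Next I would partition $J$. Set $a:=|J\cap\{1,\dots,n\}|$ and $b:=|J\cap\{n+1,\dots,2n\}|$. Each hyperbolic pair of indices in $J$ contributes exactly one index to $J\cap\{1,\dots,n\}$, so $a\ge r$. The admissible indices for the $m_i$ are exactly those of $\{1,\dots,n\}\setminus\{j-n:j\in J\cap\{n+1,\dots,2n\}\}$, which has cardinality $n-b$; so the only thing to check is that this count meets the requirement in each of the two cases.

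For case~(\ref{e:c1}), $2n+1\notin J$ forces $a+b=k$, hence $b\le k-r$, and therefore
\[
n-b\ \ge\ n-k+r\ \ge\ r,
\]
using $k\le n$. Pick any $r$ distinct admissible indices and call them $m_1,\dots,m_r$. For case~(\ref{e:c2}), $2n+1\in J$ forces $a+b=k-1$, hence $b\le k-1-r$, and
\[
n-b\ \ge\ n-k+1+r\ \ge\ r+1,
\]
again using $k\le n$. Pick any $r+1$ distinct admissible indices, name $r$ of them $m_1,\dots,m_r$ and the last one $\ell$; the conclusion follows. No step should be genuinely difficult here—the whole argument is essentially the observation that ``partner of $m$ is $m+n$'' together with an inequality $b\le k-r$ that is forced by the presence of the $r$ hyperbolic pairs.
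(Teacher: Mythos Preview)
Your proof is correct and follows essentially the same counting approach as the paper: both identify the admissible indices $m\in\{1,\dots,n\}$ via the orthogonality relation and then bound their number using $k\le n$. The only minor difference is that the paper works with the smaller set $U=\{1,\dots,n\}\setminus(J\cup J')$, additionally excluding those $m$ that lie in $J$ itself; this extra restriction is irrelevant for the lemma as stated (your larger set of size $n-b$ suffices), but it is what is actually needed in the subsequent construction of the subspaces $X_S$, where the $m_i$ must be distinct from all the $j\in J$ in order for the listed generators to be linearly independent.
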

\begin{proof}
Write $J\cap \{1,2,\ldots,n\}=\{j_1,\ldots,j_r,
j_{r+1},j_{r+2},\ldots,j_{r+s}\}$ and write
$J\cap\{1',2',\ldots,n'\}=\{j_1',j_2',\ldots,j_r',
j_{r+s+1}', j_{r+s+2}',\ldots,j_{k-r}'\}$.
Let
 $U=\{1,2,\ldots,n\}\setminus (J\cup J')$,
 where $J'=\{j'\colon j\in J\}$.
\begin{enumerate}[(1)]
\item
If $2n+1\not\in J$, then
$|U|=n-(r+k-2r)=n-k+r\geq r$, since $n-k\geq 0$.
Hence, there exists a subset $M_r=\{m_1,m_2,\ldots,m_r\}$ of $U$
of cardinality $r$.
Clearly, $f_{\eta}(e_j,e_{m_i})=0$ for every $m_i\in M_r$ and every $j\in J$.
\item
  Since
 $2n+1\in J$, we have $|U|=n-(r+k-(2r+1))=n-k+r+1\geq r+1$, as $n-k\geq 0$.
 Hence, there exists a subset $\baM_r=\{m_1,m_2,\ldots,m_r,\ell\}$
 of $U$ of cardinality $r+1$. Clearly,
 $f_{\eta}(e_j,e_t)=0$ for every $t\in\baM_r$
 and every $j\in J$. \qed
\end{enumerate}
\end{proof}

\subsection{First construction: $2n+1\not\in J$}
\label{2n+1 not in J}

Suppose
$J=\{j_1,j_2,\ldots, j_r, j_1',j_2',\ldots, j_r'\}\cup \bar{J}\subset I$,
where $\bar{J}$ does not contain any hyperbolic pair of indices, $|J|=k$
and $2n+1\not\in J$.
By (\ref{e:c1}) in Lemma~\ref{hyperbolic pairs}, there exists
$M_r=\{m_1,m_2,\ldots,m_r\}\subseteq\{1,2,\ldots,n\}$
such that  $e_{m_i}\in \{e_{j}\}_{j\in J}^{\perp}$.
We will construct a family of $2^r$ totally singular $k$--dimensional
subspaces of $V$ from these $m_i\in M_r$ as follows. Fix any bijection
$\tau:\{j_1,j_2,\ldots,j_r\}\to M_r$  and put
\begin{equation}\label{construction-1}
\begin{array}{lll}
X_{\emptyset,\tau}&:=&\langle e_{j_1}+e_{\tau(j_1)},e_{j_2}+e_{\tau(j_2)},\ldots,
e_{j_r}+e_{\tau(m_r)},\\
 & &e_{j_1'}-e_{\tau(j_1)'}, e_{j_2'}-e_{\tau(j_2)'},\ldots,
 e_{j_r'}-e_{\tau(j_r)'}, \{e_j\}_{j\in \bar{J}}\rangle.\\
\end{array}
\end{equation}
For every non--empty subset $S$ of $M_r$
define $X_{S,\tau}$ to be the $k$--dimensional subspace
of $V$ spanned by the same vectors
as $X_{\emptyset,\tau}$ in \eqref{construction-1}
except that when $\tau(j_i)\in S$, the vectors $e_{j_i}+e_{\tau(j_i)}$ and
$e_{j_i'}-e_{\tau(j_i)'}$
are respectively replaced by $e_{j_i}-e_{\tau(j_i)'}$ and $e_{j_i'}+e_{\tau(j_i)}$.
For simplicity in the following arguments, as well as in Subsection
\ref{2n+1 not in J}, we shall always assume $m_i=\tau(j_i)$ and
write just $X_{S}$ for $X_{S,\tau}$.
%%% N.B. Questo ha senso in quanto in realta' ogni biiezione ci fornisce
%%%% insiemi differenti
For an example and an explicit description,
 see Table~\ref{Table1}.
\begin{table}[h]
%\begin{small}
\[\begin{array}{lll}
X_{\emptyset}&:=&\langle e_{j_1}+e_{m_1},e_{j_2}+e_{m_2},\ldots, e_{j_r}+e_{m_r},\\
 & &e_{j_1'}-e_{m_1'}, e_{j_2'}-e_{m_2'},\ldots, e_{j_r'}-e_{m_r'}, \{e_j\}_{j\in \bar{J}}\rangle;\\[4pt]
X_{m_1}&:=&\langle e_{j_1}-e_{m_1'},e_{j_2}+e_{m_2},\ldots,e_{j_r}+e_{m_r},\\
 & &e_{j_1'}+e_{m_1}, e_{j_2'}-e_{m_2'},\ldots, e_{j_r'}-e_{m_r'}, \{e_j\}_{j\in \bar{J}}\rangle;\\[4pt]
X_{m_2}&:=&\langle e_{j_1}+e_{m_1},e_{j_2}-e_{m_2'},\ldots, e_{j_r}+e_{m_r}\\
 & &e_{j_1'}-e_{m_1}, e_{j_2'}+e_{m_2},\ldots, e_{j_r'}-e_{m_r'}, \{e_j\}_{j\in \bar{J}}\rangle;\\[4pt]
&\cdots  & \\[4pt]
X_{m_1,\dots,m_r}&:=&\langle e_{j_1}-e_{m_1'},e_{j_2}-e_{m_2'},\ldots,e_{j_r}-e_{m_r'},\\
 & &e_{j_1'}+e_{m_1}, e_{j_2'}+e_{m_2},\ldots,e_{j_r'}+e_{m_r}, \{e_j\}_{j\in \bar{J}}\rangle.\\
\end{array}\]
%\end{small}
\caption{Subspaces for $2n+1\not\in J$}
\label{Table1}
\end{table}
\begin{theo} \label{cap1}
The set $\fX_k:=\{X_S\}_{S\subseteq M_r}$ is a polar $2^r$--cap of $\Delta_k$.
\end{theo}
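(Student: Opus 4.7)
The plan is to verify three things: (i) each $X_S$ is a totally singular $k$-subspace of $V$, hence a point of $\Delta_k$; (ii) the $2^r$ subspaces indexed by $S\subseteq M_r$ are pairwise distinct; (iii) no line of $\Delta_k$ meets $\fX_k$ in more than two points. Steps (i) and (ii) are essentially bookkeeping; step (iii) is the heart of the matter.

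For (i), I would unpack the generators of $X_S$: at each hyperbolic index $i$ the two generators have the form $e_{j_i}\pm e_{m_i}$ (resp.\ $e_{m_i'}$) and $e_{j_i'}\mp e_{m_i'}$ (resp.\ $e_{m_i}$), plus the $\{e_j\}_{j\in\bar J}$. Because $\eta(e_p)=0$ for all $p\leq 2n$ and $f_\eta(e_p,e_q)\neq 0$ only on a hyperbolic pair $\{p,p'\}$, each of these generators is singular and Lemma~\ref{hyperbolic pairs} guarantees that $M_r\subseteq U$, so the indices in $M_r\cup M_r'$ do not clash with $J$. Pairwise orthogonality of the generators then reduces to a handful of direct calculations of the form $f_\eta(e_{j_i}+e_{m_i},e_{j_l'}-e_{m_l'}) = \delta_{il}-\delta_{il}=0$; this shows $X_S$ is totally singular. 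The $k$ generators are linearly independent because their supports in $\mathbb{B}$ are confined to disjoint index-blocks, so $\dim X_S=k$ and $X_S\in\Delta_k$. Distinctness (ii) is immediate: the $2$-dimensional block at index $i$ equals $\langle e_{j_i}+e_{m_i},e_{j_i'}-e_{m_i'}\rangle$ when $m_i\notin S$ and $\langle e_{j_i}-e_{m_i'},e_{j_i'}+e_{m_i}\rangle$ when $m_i\in S$, and these two 2-spaces are different.

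The main step is (iii). The claim I would establish is
\[
\dim(X_S\cap X_{S'}) \;=\; k - |S\triangle S'|.
\]
Indeed, the common part $\langle\{e_j\}_{j\in\bar J}\rangle$ and the hyperbolic blocks where $S,S'$ agree each contribute their full dimension to the intersection, while every block on which $S$ and $S'$ disagree gives a $2$-dim subspace of $X_S$ and a $2$-dim subspace of $X_{S'}$ inside the $4$-dim ambient $\langle e_{j_i},e_{j_i'},e_{m_i},e_{m_i'}\rangle$; a quick row-reduction shows their combined span is $3$-dimensional, so their intersection is $1$-dimensional, contributing $1$ rather than $2$. Recall that two distinct points of $\Delta_k$ (both for $k<n$ and $k=n$) are collinear in $\Delta_k$ only if they intersect in a $(k-1)$-dim subspace; therefore, three points $X_{S_1},X_{S_2},X_{S_3}$ of $\fX_k$ on a common line of $\Delta_k$ would share a fixed $(k-1)$-subspace, forcing $|S_i\triangle S_j|=1$ for every pair. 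But then $S_1\triangle S_3=(S_1\triangle S_2)\triangle(S_2\triangle S_3)$ has cardinality $0$ or $2$, a contradiction.

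For completeness, to obtain the stronger $v=1$ statement mentioned in the remark preceding the theorem (in the case $k<n$), I would observe that when $|S\triangle S'|=1$ with differing index $m_i$, the vector $e_{m_i}+e_{m_i'}$ lies in $X_S+X_{S'}$ and satisfies $\eta(e_{m_i}+e_{m_i'})=f_\eta(e_{m_i},e_{m_i'})=1\neq 0$, so $\langle X_S,X_{S'}\rangle$ is not totally singular and the line of $\G_k$ they span is not a line of $\Delta_k$. The main obstacle is simply the intersection-dimension formula above; once it is in place, the cap property reduces to an elementary symmetric-difference argument.
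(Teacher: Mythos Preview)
Your argument is correct and in fact more careful than the paper's. The paper's proof proceeds differently: it asserts that for any two distinct $S,T\subseteq M_r$ one has $d(X_S,X_T)=k-\dim(X_S\cap X_T)\geq 2$ in the collinearity graph of $\G_k$, deducing this from the observation that the $2$--space $\langle e_{j_u}-e_{m_u'},\,e_{j_u'}+e_{m_u}\rangle$ (for some $m_u\in S\setminus T$) is not contained in $X_S\cap X_T$; it then concludes that no two members of $\fX_k$ are collinear even in $\G_k$, whence the cap property in $\Delta_k$ follows. Your exact formula $\dim(X_S\cap X_{S'})=k-|S\triangle S'|$ shows, however, that this inference is not justified: when $|S\triangle T|=1$ the two blocks meet in the $1$--dimensional span of $e_{j_u}+e_{j_u'}+e_{m_u}-e_{m_u'}$, so $d(X_S,X_T)=1$ and the subsequent remark in the paper that $\fX_k$ is a $(2^r,1)$--set of $\G_k$ fails.

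What your approach buys is a sound replacement for that step. The symmetric--difference parity argument (three points on a line of $\Delta_k$ would force $|S_i\triangle S_j|=1$ for all pairs, impossible since $(S_1\triangle S_2)\triangle(S_2\triangle S_3)$ has even size) cleanly establishes the $(2^r,2)$--property claimed in the theorem, uniformly for $k<n$ and $k=n$. Your closing paragraph then correctly recovers the intended $(2^r,1)$--property in $\Delta_k$ for $k<n$ by showing that when $|S\triangle S'|=1$ the join $\langle X_S,X_{S'}\rangle$ contains the anisotropic vector $e_{m_i}+e_{m_i'}$ and hence is not a line of $\Delta_k$; note that this last refinement genuinely needs $k<n$, since for $k=n$ two such subspaces \emph{are} collinear in $\Delta_n$.
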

\begin{proof}
Clearly $|\fX_k|=2^r$.
We now prove $\fX_k\subset\Delta_{k}$ and that
no two distinct elements of $\fX_k$
are collinear in $\Delta_k$.
By Lemma ~\ref{hyperbolic pairs}, it is straightforward to see that
for any $S\subseteq M_r$,
the subspace $X_S$ is totally singular with respect to $\eta$.
Let $S$ and $T$ be two
arbitrary distinct subsets of $M_r$. Since $S\neq T$, there
exists $u\in\{1,2,\ldots,r\}$ such that $m_u\in S$ and $m_u\not\in
T$. So,   $\langle e_{j_u}- e_{m_u'}, e_{j_u'}+
e_{m_u}\rangle \not\subseteq X_S\cap X_T$. It follows that
the distance $d(X_S,X_T):=k-\dim(X_S\cap X_T)$
between $X_S$ and $X_T$, regarded as
points
of the
collinearity graph of $\cG_k$, is at least $2$.
As the collinearity graph of $\Delta_k$ is a subgraph of that of
$\cG_k$, this yields the result.
\qed\end{proof}
We observe that by Theorem \ref{cap1}, $\fX_k$ is also a
$(2^r,1)$--set of $\cG_k$.

For each $S\subseteq M_r$, denote by $B_S$ the set formed by the
first $2r$ generators of $X_S$, ordered as in
Table~\ref{Table1}, and by  $\overline{X}_S=\langle B_S\rangle$
the subspace of $X_S$ spanned by $B_S$.
\begin{co}\label{cap2}
The set $\overline{\fX}_{2r}=\{\overline{X}_S\}_{S\subseteq M_r}$
 is a polar $2^r$--cap of $\Delta_{2r}$.
\end{co}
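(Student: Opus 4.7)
The plan is to observe that Corollary~\ref{cap2} is essentially the specialisation of Theorem~\ref{cap1} to the case in which the index set $\bar J$ is empty, and then to verify that the remaining arguments go through verbatim. Concretely, given the subset $M_r=\{m_1,\dots,m_r\}$ and the bijection $\tau$ fixed in Subsection~\ref{2n+1 not in J}, set $J_0:=\{j_1,\dots,j_r,j_1',\dots,j_r'\}$, which is a $2r$-subset of $I$ not containing $2n+1$ and built entirely out of hyperbolic pairs. Applying the construction of Subsection~\ref{2n+1 not in J} to $J_0$ (so $\bar J=\emptyset$ and $k=2r$) produces exactly the subspaces $\overline{X}_S=\langle B_S\rangle$ for $S\subseteq M_r$, since the generators listed in the first two rows of Table~\ref{Table1} do not involve any vector indexed by $\bar J$.

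First I would verify that each $\overline{X}_S$ is a point of $\Delta_{2r}$. By Lemma~\ref{hyperbolic pairs}(\ref{e:c1}) applied to $J_0$, the vectors $e_{m_i}$ lie in $\{e_j\}_{j\in J_0}^{\perp}$; then a direct computation with the hyperbolic pairs $\{e_i,e_{i'}\}$ (identical to the one implicit in the proof of Theorem~\ref{cap1}) shows that the $2r$ generators of $B_S$ are mutually orthogonal with respect to $f_\eta$ and singular for $\eta$, so $\overline{X}_S$ is totally singular of dimension $2r$.

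Next I would show pairwise non-collinearity in $\Delta_{2r}$. Let $S\neq T$ be distinct subsets of $M_r$; pick $u\in\{1,\dots,r\}$ with $m_u\in S\setminus T$ (or swap the roles). Exactly as in the proof of Theorem~\ref{cap1}, the two-dimensional subspace $\langle e_{j_u}-e_{m_u'},\,e_{j_u'}+e_{m_u}\rangle$ is not contained in $\overline{X}_S\cap \overline{X}_T$, so $\dim(\overline{X}_S\cap\overline{X}_T)\leq 2r-2$. Hence the Grassmann distance between $\overline{X}_S$ and $\overline{X}_T$ in $\mathcal{G}_{2r}$ is at least $2$, and since the collinearity graph of $\Delta_{2r}$ is a subgraph of that of $\mathcal{G}_{2r}$, the same holds in $\Delta_{2r}$. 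Therefore $\{\overline{X}_S\}_{S\subseteq M_r}$ is a $2^r$-element set of points of $\Delta_{2r}$ no two of which are collinear, i.e.\ a polar $2^r$-cap (in fact a $(2^r,1)$-set) of $\Delta_{2r}$.

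There is no real obstacle here: the statement is a direct corollary whose only content is to record that stripping the common $\bar J$-part from each $X_S$ preserves both total singularity and the non-collinearity argument, and that the resulting subspaces then live in the smaller polar Grassmannian $\Delta_{2r}$ rather than $\Delta_k$.
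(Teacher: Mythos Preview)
Your proposal is correct and is exactly the argument the paper intends: the corollary is stated without proof because it is just Theorem~\ref{cap1} applied with $\bar J=\emptyset$ and $k=2r$, which is precisely the specialisation you spell out. Your expansion of the total singularity and distance-$\geq 2$ checks is accurate and matches the paper's reasoning verbatim.
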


Given an arbitrary $S\subseteq M_r$, the elements of $B_S$
can be described as follows:
\[ \begin{array}{c}
 e_{j_1}+ (-1)^{{\chi_S}(m_1)}e_{m_1+ n{\chi_S}(m_1)},\ldots,e_{j_r}+(-1)^{\chi_S(m_r)}e_{m_r+ n{\chi_S}(m_r)}, \\
 e_{j_1'}+ (-1)^{\chi_S(m_1)+1}e_{m_1 +n(1-{\chi_S}(m_1))}, \ldots,e_{j_r'}+(-1)^{\chi_S(m_r)+1}e_{m_r + n(1-{\chi_S}(m_r))},
\end{array} \]
where $\chi_S$ is the characteristic function of $S$,
that is $\chi_S(x)=1$ if $x\in S$ and $\chi_S(x)=0$ if $x\not\in S$,
and, as before,
$x':=x+n$.

The Grassmann embedding $\varepsilon_{2r}^{gr}$ applied to any of the
singular subspaces $\overline{X}_S=\langle B_S\rangle$
determines a point
$\varepsilon_{2r}^{gr}(\overline{X}_S)=\langle \wedge^{2r}B_S\rangle$
of $\PG(W_{2r}^{gr})$, with
\begin{small}
$$\label{eq1}
\begin{array}{l}
\wedge^{2r}B_S=
(e_{j_1}+ (-1)^{{\chi_S}(m_1)}e_{m_1+ n{\chi_S}(m_1)})\wedge \cdots \wedge (e_{j_r}+(-1)^{\chi_S(m_r)}e_{m_r+ n{\chi_S}(m_r)})\wedge\\
\wedge (e_{j_1'}+ (-1)^{\chi_S(m_1)+1}e_{m_1 +n(1-{\chi_S}(m_1))})\wedge \cdots \wedge (e_{j_r'}+(-1)^{\chi_S(m_r)+1}e_{m_r + n(1-{\chi_S}(m_r))}). \\
\end{array}
$$
\end{small}
Hence,  $\wedge^{2r}B_S$ is a sum of vectors of the form
$\sigma_S(K)\cdot e_K$, where $\sigma_{S}(K) = \pm 1$ and $K \subseteq
\{j_\ell, j'_\ell, m_\ell, m'_\ell\}_{\ell=1}^r$ has size $2r$ and contains at most
$r$ hyperbolic pairs of indices given by either $\{j_\ell,j'_\ell\}$
or $\{m_\ell,m'_\ell\}$.

It is possible to write $\wedge^{2r}B_S$ in a more convenient way by
expanding the wedge products.
To this end, let $T =
\{t_1,\ldots , t_r\}\subseteq\{j_1,\ldots ,j_r,m_1,\ldots ,m_r\}$ with
$t_\ell \in \{j_\ell,m_\ell\}$ for $1\leq\ell\leq r$,
$T' = \{t'_1,\ldots , t'_r\}$
and denote by ${\cT}_r$ the family of all such sets $T$. The
mapping sending every $T\in{\cT}_r$ to $T\cap M_r$ is a
bijection between ${\cT}_r$ and the family of all the subsets of
$M_r$. Hence, $|{\cT}_r|=2^r$.

Consider $U=\{u_1,\ldots,u_{2r}\}\subseteq
\{j_1,\ldots ,j_r,m_1,\ldots ,m_r, j_1',\ldots ,j_r',m_1',\ldots ,m_r'\}$
such that
$|\{\{i,i'\}\subset U\}|<r$ and denote by $\cU$ the family of all
such sets.
\par
In other words, every set $T\cup T'$ with $T\in\cT_r$ is
made up of precisely $r$ hyperbolic pairs of indices, while any
 $U\in\cU$ is made up of \emph{at most} $r-1$ hyperbolic
pairs of indices.
 Then,
\begin{equation}\label{eq wedge1}
\wedge^{2r}B_S=\sum_{T\in \cT_r} \sigma_S(T)e_{T, T'}+
\sum_{U\in \cU} \sigma_S(U)e_U,
\end{equation} where $e_{T, T'}:=e_T\wedge e_{T'}$ and $\sigma_S(T)$,
$\sigma_S(U)$ are
shorthand notations
for $\sigma_S(T\cup T')$ and $\sigma_S(U\cup U')$, respectively.

Put
\begin{equation}\label{xsi-prima costruzione}
\xi_S:= \sum_{T\in \mathcal{T}_r} \sigma_S(T)e_{T,T'}.
\end{equation}
 In particular, $\xi_{\emptyset}:= \sum_{T\in \cT_r}
\sigma_{\emptyset}(T)e_{T,T'},$ where, as it can be easily seen,
$\sigma_{\emptyset}(T)=(-1)^{|T\cap M_r|}$.

By Corollaries~\ref{polar projective} and~\ref{cap2},
$\varepsilon_{2r}^{gr}(\overline{\fX}_{2r})=\{\varepsilon_{2r}^{gr}(\overline{X}_S)\}_{S\subseteq M_r}$ is a
projective cap of $\PG(W_{2r}^{gr})$.
The function sending
$\varepsilon_{2r}^{gr}(\overline{X}_S)$ to $\xi_S$
is a bijection between $\varepsilon_{2r}^{gr}(\overline{\fX}_{2r})$ and
the set
$\{\xi_S\}_{S\subseteq M_r}$.

\subsection{Second construction: $2n+1 \in J$}
\label{2n+1 in J}
We now move to Case (\ref{e:c2}) of  Lemma~\ref{hyperbolic
pairs}. In close analogy to Section~\ref{2n+1 not in
J}, we will introduce a family of $2^r$ totally singular $k$--dimensional
subspaces of $V$. Most of the results previously proved
hold unchanged  when $2n+1\in J$.

Let $J=\{j_1,j_2,\ldots,j_r, j_1',j_2',\ldots,j_r',2n+1\}\cup
\bar{J}\subset I$, where $\bar{J}$ does not contain any hyperbolic pair of
indices
and $|J|=k$. By (\ref{e:c2}) in Lemma~\ref{hyperbolic pairs}, there exists
\[ \baM_r=\{m_1,m_2,\ldots,m_r,\ell\}\subseteq\{1,2,\ldots,n\} \]
such that
$e_{t}\in \{e_{j}\}_{j\in J}^{\perp}$ for any $t\in\baM_r$.

Put
\[\begin{array}{lll}
\mathcal{X}_{\emptyset}&:=&\langle e_{j_1}+e_{m_1},e_{j_2}+e_{m_2},\ldots,e_{j_r}+e_{m_r},\\
 & &e_{j_1'}-e_{m_1'}, e_{j_2'}-e_{m_2'},\ldots,e_{j_r'}-e_{m_r'}, e_\ell+e_{2n+1}-e_{\ell'}, \{e_j\}_{j\in \bar{J}}\rangle.\\
\end{array}\]
Clearly, $\mathcal{X}_{\emptyset}$ is totally singular.

As before, for every non--empty subset $S$
of $M_r=\{m_1,m_2,\ldots,m_r\}$ define
$\mathcal{X}_S$ to be the $k$--dimensional subspace of $V$ spanned by the same
vectors as $\mathcal{X}_{\emptyset}$, except that if $m_i\in S$,
then $e_{j_i}+e_{m_i}$ and $e_{j_i'}-e_{m_i'}$
are respectively replaced by
$e_{j_i}-e_{m_i'}$ and $e_{j_i'}+e_{m_i}$. For more details, see
Table~\ref{Table2}.
\begin{table}[h]
%\begin{small}
\[ \begin{array}{lll}
\mathcal{X}_{\emptyset}&:=&\langle e_{j_1}+e_{m_1},e_{j_2}+e_{m_2},\ldots,e_{j_r}+e_{m_r},\\
 & &e_{j_1'}-e_{m_1'}, e_{j_2'}-e_{m_2'},\ldots,e_{j_r'}-e_{m_r'}, e_\ell+e_{2n+1}-e_{\ell'},\{e_j\}_{j\in \bar{J}}\rangle;\\[4pt]
\mathcal{X}_{m_1}&:=&\langle e_{j_1}-e_{m_1'},e_{j_2}+e_{m_2},\ldots,e_{j_r}+e_{m_r},\\
 & &e_{j_1'}+e_{m_1}, e_{j_2'}-e_{m_2'},\ldots,e_{j_r'}-e_{m_r'}, e_\ell+e_{2n+1}-e_{\ell'},\{e_j\}_{j\in \bar{J}}\rangle;\\[4pt]
\mathcal{X}_{m_2}&:=&\langle e_{j_1}+e_{m_1},e_{j_2}-e_{m_2'},\ldots,e_{j_r}+e_{m_r}\\
 & &e_{j_1'}-e_{m_1}, e_{j_2'}+e_{m_2},\ldots,e_{j_r'}-e_{m_r'},
 e_\ell+e_{2n+1}-e_{\ell'},\{e_j\}_{j\in \bar{J}}\rangle;\\[4pt]
&\cdots & \\[4pt]
\mathcal{X}_{m_1,\ldots,m_r}&:=&\langle e_{j_1}-e_{m_1'},e_{j_2}-e_{m_2'},\ldots,e_{j_r}-e_{m_r'},\\
 & &e_{j_1'}+e_{m_1}, e_{j_2'}+e_{m_2},\ldots,e_{j_r'}+e_{m_r},
 e_\ell+e_{2n+1}-e_{\ell'}, \{e_j\}_{j\in \bar{J}}\rangle.\\
\end{array} \]
%\end{small}
\caption{Subspaces for $2n+1\in J$}
\label{Table2}
\end{table}

We thus determine $2^r$ totally singular $k$--dimensional subspaces of
$V$ each being at distance at least $2$ from any other, when
regarded as points
in the collinearity graph of $\Delta_k$.
Hence, the following analogue of Theorem~\ref{cap1} holds.
 \begin{theo}\label{cap3}
   The set $\fX_{k}'=\{\mathcal{X}_S\}_{S\subseteq M_r}$ is a polar $2^r$--cap
   of $\Delta_k$.
\end{theo}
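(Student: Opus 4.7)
The plan is to adapt the proof of Theorem~\ref{cap1} essentially word for word: the only difference between the two constructions is the additional generator $v:=e_\ell+e_{2n+1}-e_{\ell'}$ common to every $\mathcal{X}_S$ in $\fX_k'$, while the remaining generators (the $2r$ coming from the $r$ hyperbolic pairs, and the $|\bar{J}|$ coming from $\{e_j\}_{j\in\bar{J}}$) are identical in shape to those of Theorem~\ref{cap1}. I would carry out three steps: (i) observe that $|\fX_k'|=2^r$, immediate from the parametrisation by $S\subseteq M_r$; (ii) verify that each $\mathcal{X}_S$ is a totally singular $k$-dimensional subspace of $V$, hence a point of $\Delta_k$; (iii) show that distinct $\mathcal{X}_S$ and $\mathcal{X}_T$ are non-collinear in $\Delta_k$.

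Step~(ii) is where the only genuinely new computation appears, and it reduces to checking that $v$ is singular and $f_\eta$-orthogonal to every other generator; the mutual relations among the remaining generators are already handled in the proof of Theorem~\ref{cap1}. Singularity is one line: $\eta(v)=\eta(e_{2n+1})+f_\eta(e_\ell,-e_{\ell'})=1-1=0$, using that $\mathrm{char}\,\FF$ is odd. Orthogonality follows from part~(\ref{e:c2}) of Lemma~\ref{hyperbolic pairs}: $\ell$ is chosen so that both $\ell$ and $\ell'$ lie in $\{1,\ldots,n\}\setminus(J\cup J')$, so they do not form a hyperbolic pair with any index appearing in the remaining generators; and $e_{2n+1}$ is automatically $f_\eta$-orthogonal to every $e_i$ with $i\leq 2n$. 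Adding $v$ and the $|\bar{J}|$ vectors $\{e_j\}_{j\in\bar{J}}$ to the $2r$ generators parametrised by $S$ yields $2r+1+|\bar{J}|=k$ linearly independent vectors spanning a totally singular $k$-subspace.

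Step~(iii) is a verbatim reuse of the argument in Theorem~\ref{cap1}: given distinct $S,T\subseteq M_r$, pick $u$ with $m_u\in S\setminus T$; then $\mathcal{X}_S$ contains $e_{j_u}-e_{m_u'}$ and $e_{j_u'}+e_{m_u}$, whereas in $\mathcal{X}_T$ the local generators at position $u$ are $e_{j_u}+e_{m_u}$ and $e_{j_u'}-e_{m_u'}$. The ``fixed'' common generators $\{e_j\}_{j\in\bar{J}}$ and $v$ involve only indices in $\bar{J}\cup\{\ell,\ell',2n+1\}$, which are disjoint from $\{j_u,j_u',m_u,m_u'\}$; hence the local analysis at position $u$ is unaffected by their presence, and one concludes that $\mathcal{X}_S$ and $\mathcal{X}_T$ are not collinear in $\Delta_k$ exactly as for $X_S$ and $X_T$ in Theorem~\ref{cap1}.

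The main (and essentially only) obstacle is the bookkeeping in step~(ii), namely ensuring that the indices $\ell,\ell',2n+1$ carried by the new vector $v$ do not produce any unwanted $f_\eta$-pairings with the other generators of $\mathcal{X}_S$. Part~(\ref{e:c2}) of Lemma~\ref{hyperbolic pairs} is tailored exactly to supply the required extra index $\ell$, and this is the only point at which the standing hypothesis $2n+1\in J$ actually enters. Once those orthogonality checks are made, the proof is a mechanical copy of Theorem~\ref{cap1}.
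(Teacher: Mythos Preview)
Your proposal is correct and takes essentially the same approach as the paper: the paper itself gives no separate argument for Theorem~\ref{cap3}, simply remarking that one obtains $2^r$ totally singular $k$--subspaces at mutual distance at least $2$ ``as in Theorem~\ref{cap1}'', and your write-up spells out exactly that adaptation together with the one new verification needed (singularity of $v=e_\ell+e_{2n+1}-e_{\ell'}$ and its $f_\eta$-orthogonality to the remaining generators, supplied by part~(\ref{e:c2}) of Lemma~\ref{hyperbolic pairs}). One cosmetic slip: $\ell'\in\{n+1,\ldots,2n\}$, not $\{1,\ldots,n\}$; what you need (and what the lemma gives) is that neither $\ell$ nor $\ell'$ forms a hyperbolic pair with any index occurring in the other generators.
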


Arguing as in Subsection~\ref{2n+1 not in J},
let $\widehat{B}_S$ be the set consisting of the first $2r+1$ generators
of $\mathcal{X}_S$
and $\overline{\mathcal{X}}_S=\langle \widehat{B}_S\rangle$ be the subspace of
$\mathcal{X}_S$ spanned by $\widehat{B}_S$. In other words,
$\widehat{B}_S:=B_S\cup \{e_\ell+e_{2n+1}-e_{\ell'}\},$ with $B_S$ defined
as in Subsection~\ref{2n+1 not in J}.

The following corresponds to Corollary~\ref{cap2}.
\begin{co}\label{ccap2}
  The set $\overline{\fX}_{2r+1}'=\{\overline{\mathcal{X}}_S\}_{S\subseteq M_r}$
  is a polar
  $2^r$--cap of $\Delta_{2r+1}$.
\end{co}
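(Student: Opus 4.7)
The plan is to derive this corollary in the same way Corollary~\ref{cap2} follows from Theorem~\ref{cap1}, by restricting the cap $\fX_{k}'$ of Theorem~\ref{cap3} to the subspaces spanned by just the first $2r+1$ generators of each $\mathcal{X}_S$.

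First, I would check that $\widehat{B}_S$ is linearly independent for every $S\subseteq M_r$; this is immediate from Table~\ref{Table2}, since the indices of the $2r$ vectors of $B_S$ and those of $e_\ell+e_{2n+1}-e_{\ell'}$ are pairwise disjoint up to the hyperbolic pairing and none of them involves $\ell$ or $2n+1$. Thus $\overline{\mathcal{X}}_S=\langle \widehat{B}_S\rangle$ is a $(2r+1)$-dimensional subspace of $\mathcal{X}_S$. Since $\mathcal{X}_S$ is totally singular with respect to $\eta$ by Theorem~\ref{cap3}, so is every subspace of it; hence $\overline{\mathcal{X}}_S$ is a point of $\Delta_{2r+1}$.

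Next, I would show that the assignment $S\mapsto \overline{\mathcal{X}}_S$ is injective, so that $|\overline{\fX}_{2r+1}'|=2^r$, and that for any two distinct subsets $S,T\subseteq M_r$ the subspaces $\overline{\mathcal{X}}_S$ and $\overline{\mathcal{X}}_T$ are at distance at least $2$ in the collinearity graph of $\mathcal{G}_{2r+1}$. To this end, pick $u\in\{1,\dots,r\}$ with $m_u\in S\bigtriangleup T$; then in the block corresponding to the index $u$ the generators of $\widehat{B}_S$ are $e_{j_u}-e_{m_u'},\, e_{j_u'}+e_{m_u}$, while those of $\widehat{B}_T$ are $e_{j_u}+e_{m_u},\,e_{j_u'}-e_{m_u'}$ (or vice versa). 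All remaining $2r-1$ generators (the other blocks together with $e_\ell+e_{2n+1}-e_{\ell'}$) coincide in $\widehat{B}_S$ and $\widehat{B}_T$, so they contribute at most $2r-1$ dimensions to $\overline{\mathcal{X}}_S\cap\overline{\mathcal{X}}_T$; since neither of the two differing $u$-th generators of $\widehat{B}_S$ lies in $\overline{\mathcal{X}}_T$ (the relevant index vectors $e_{m_u},e_{m_u'}$ are not present in $\widehat{B}_T$), one gets $\dim(\overline{\mathcal{X}}_S\cap \overline{\mathcal{X}}_T)\le 2r-1$, i.e.\ distance $\ge 2$.

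Finally, since the collinearity graph of $\Delta_{2r+1}$ is a subgraph of that of $\mathcal{G}_{2r+1}$, no line of $\Delta_{2r+1}$ can contain two distinct elements of $\overline{\fX}_{2r+1}'$; hence $\overline{\fX}_{2r+1}'$ is in fact a polar $(2^r,1)$-set, and a fortiori a polar $2^r$-cap of $\Delta_{2r+1}$. The main (and essentially only) difficulty is bookkeeping: one must verify that the additional generator $e_\ell+e_{2n+1}-e_{\ell'}$, which is common to all $\widehat{B}_S$, does not spoil linear independence nor the intersection bound, and both checks are straightforward from the explicit form of the basis because $\ell,\ell'$ and $2n+1$ are chosen disjoint from all the indices $j_i,m_i$ involved in the differing generators.
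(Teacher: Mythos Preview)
Your proof follows the same template as the paper's argument for Theorem~\ref{cap1} (on which the paper's treatment of this corollary implicitly rests), and the overall structure is the intended one. However, your key dimension bound contains a genuine gap.

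You argue that since neither of the two $u$-th block generators $e_{j_u}-e_{m_u'}$ and $e_{j_u'}+e_{m_u}$ of $\widehat{B}_S$ lies in $\overline{\mathcal{X}}_T$, one obtains $\dim(\overline{\mathcal{X}}_S\cap\overline{\mathcal{X}}_T)\le 2r-1$. First, the parenthetical reason is incorrect: $e_{m_u}$ and $e_{m_u'}$ \emph{do} appear in $\widehat{B}_T$, namely in the generators $e_{j_u}+e_{m_u}$ and $e_{j_u'}-e_{m_u'}$. More seriously, two vectors can each lie outside a subspace while a linear combination of them lies inside; here
\[
(e_{j_u}-e_{m_u'})+(e_{j_u'}+e_{m_u})
=(e_{j_u}+e_{m_u})+(e_{j_u'}-e_{m_u'}),
\]
so the $u$-th block contributes one dimension to the intersection, not zero. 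A block-by-block count actually gives $\dim(\overline{\mathcal{X}}_S\cap\overline{\mathcal{X}}_T)=(2r+1)-|S\bigtriangleup T|$; when $|S\bigtriangleup T|=1$ the distance in $\mathcal{G}_{2r+1}$ is exactly $1$, contradicting your claimed bound. (The same oversight is present, verbatim, in the paper's proof of Theorem~\ref{cap1}.)

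The corollary is nevertheless true, but the repair depends on whether $2r+1<n$ or $2r+1=n$. If $2r+1<n$, note that $\langle\overline{\mathcal{X}}_S,\overline{\mathcal{X}}_T\rangle$ contains $(e_{j_u}+e_{m_u})-(e_{j_u}-e_{m_u'})=e_{m_u}+e_{m_u'}$, which is non-singular; hence the span is not totally singular and the two points are not collinear in $\Delta_{2r+1}$, giving a $(2^r,1)$-set as desired. If $2r+1=n$, pairs with $|S\bigtriangleup T|=1$ \emph{are} collinear in the dual polar space $\Delta_n$, but three distinct subsets of $M_r$ cannot have all pairwise symmetric differences of size $1$ (parity), so no three points share a line and one still obtains a polar $2^r$-cap.
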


For any $S\subseteq M_r$,
apply the Grassmann embedding $\varepsilon_{2r+1}^{gr}$ to the
singular subspaces $\overline{\mathcal{X}}_S=\langle
\widehat{B}_S\rangle$. Hence,
$\varepsilon_{2r+1}^{gr}(\overline{\mathcal{X}}_S)=\langle
\wedge^{2r+1}\widehat{B}_S\rangle$ is the point of $\PG(W_{2r+1}^{gr})$
spanned by the vector
$\wedge^{2r+1}\widehat{B}_S:=\wedge^{2r}B_S\wedge
(e_\ell+e_{2n+1}-e_{\ell'})$.

Expanding $\wedge^{2r+1}\widehat{B}_S$, we get an analogue of
\eqref{eq wedge1}:
\begin{equation}\label{eq wedge2}
\wedge^{2r+1}\widehat{B}_S=\sum_{T\in \mathcal{T}_r} \sigma_S(T)e_{T, T',2n+1}+
\sum_{\bar{U}\in \bar{\mathcal{U}}} \sigma_S(\bar{U})e_{\bar{U}},%=\overline{\xi}_S+
%\sum_{\bar{U}\in \bar{\mathcal{U}}} \sigma_S(\bar{U})e_{\bar{U}},
\end{equation}
where $e_{T, T',2n+1}:=e_T\wedge e_{T'}\wedge e_{2n+1}$,
$\bar{U}\subseteq \bar{\mathcal{U}}= \mathcal{U}\cup \{l,l'\},$
$|\{(i,i')\subset \bar{U}\}|<r$, $ |\bar{U}|=2r+1$; the sets $T, T', {\cal
T}_r$ and $\mathcal{U}$ are defined as in Section~\ref{2n+1 not in J}. The
coefficients $\sigma_S(T)$ and $\sigma_S(\bar{U})$ are
$\pm 1$. Put
\begin{equation}\label{xsi-seconda costruzione}
\overline{\xi}_S:=\sum_{T\in \mathcal{T}_r} \sigma_S(T)e_{T, T',2n+1}.
\end{equation}

By Corollaries~\ref{polar projective} and~\ref{ccap2},
$\varepsilon_{2r+1}^{gr}(\overline{\fX}'_{2r+1})=\{\varepsilon_{2r+1}^{gr}(\overline{\mathcal{X}}_S)\}_{S\subseteq M_r}$
is a projective cap
of $\PG(W_{2r+1}^{gr})$.
The function sending any element
 $\varepsilon_{2r+1}^{gr}(\overline{\mathcal{X}}_S)$
 to the vector $\overline{\xi}_S$
is a bijection between $\varepsilon_{2r+1}^{gr}(\overline{\fX}'_{2r+1})$ and
$ \{\overline{\xi}_S\}_{S\subseteq M_r}$.

Observe that Main  Result~\ref{main4} is contained in
Corollaries~\ref{cap2} and~\ref{ccap2}.

\section{Hadamard matrices and codes from caps}
\label{Hadamard-Sylvester}
Recall that a
Hadamard matrix of order $m$ is an $(m\times m)$--matrix $H$ with
entries $\pm 1$ such that $HH^t=mI$, where $I$ is the $(m\times m)$--identity
matrix. Hadamard matrices have been
widely investigated, as their existence, for $m>2$, is equivalent to
that of extendable symmetric $2$--designs with parameters
$(m-1,\frac{1}{2}m-1,\frac{1}{4}m-1)$; see~\cite{CvL}, and also
\cite[Theorem 4.5]{HP}.
It is well known that
the point--hyperplane design of $PG(n,2)$ is a Hadamard
$2-(2^{n+1}-1,2^n-1,2^{n-1}-1)$ design; any of the corresponding Hadamard
matrices is called a \emph{Sylvester matrix}; see~\cite[Example
1.31]{CvL}.
Indeed,
the so called recursive \emph{Kronecker product
  construction}, see~\cite[Theorem 3.23]{HP}, as
\[
 S_1=\begin{pmatrix}
   1 & 1 \\
   1 &-1 \\
   \end{pmatrix}, \qquad
 S_n=S_{n-1}\otimes\begin{pmatrix}
  1 & 1 \\
  1 & -1 \\
  \end{pmatrix}
  \]
  always gives a Sylvester matrix.
In this section we shall show how it is possible
to associate a Hadamard matrix of order $2^r$
to any polar cap $\overline{\fX}_{2r}$ of
 $\Delta_{2r}$ and $\overline{\fX}_{2r+1}'$ of
 $\Delta_{2r+1}$.
Recall that $\overline{\fX}_{2r}$ and $\overline{\fX}_{2r+1}'$ are
introduced respectively in
Corollaries~\ref{cap2} and~\ref{ccap2} of Section~\ref{construction}.
In particular, we shall make use  of the
 vectors $\xi_S$ and $\bar{\xi}_S$ therein computed respectively in equations
\eqref{xsi-prima costruzione} and \eqref{xsi-seconda costruzione}.
We shall also introduce an order relation on the points of the cap
itself, in order to prove that this matrix can be obtained by the recursive
Sylvester construction.
This will also provide a direct connection with first order Reed--Muller
codes; for more details, see also~\cite{AK0}.

%\label{Hadamard_matrix}

At first, we need to take into account the two cases of
Sections~\ref{2n+1 not in J} and~\ref{2n+1 in J} separately.
We adopt the same notation as is those sections.

For $2n+1\not\in J$,
put ${\cB}_S := \{\sigma_S(T)e_{T,T'}\}_{T\in{\cT}_r}$ for $S \subseteq M_r$;
see Equation \eqref{xsi-prima costruzione}.  Then,
${\cB}_S$ is a basis of the linear space $L_{{\cT}_r} := \langle
e_{T,T'}\rangle_{T\in{{\cT}_r}}$. In particular, $\cB_\emptyset$
is a basis of $L_{{\cT}_r}$
and $\xi_{S}=\sum_{T\in \mathcal{T}_r}\sigma_S(T)e_{T,T'}\in L_{{\cT}_r}$.
Thus, we can consider the coordinates
$\{(\xi_{S})_T\}_{T\in{{\cT}_r}}$ of $\xi_{S}$ with respect to
${\cB}_\emptyset$. Clearly, $(\xi_{S})_T
= \sigma_S(T)\sigma_{\emptyset}(T)$.
Observe that, while we have selected ${\cB}_\emptyset$ as a basis,
the result holds for any arbitrary fixed basis of the form $\cB_S$.
%% CASE 2n+1 in J

If $2n+1\in J$, let $\overline{\cB}_S=\{\sigma_S(T)e_{T,T',2n+1}\}_{T\in\cT_r}$;
see Equation \eqref{xsi-seconda costruzione}.
Then, $\overline{\cB}_S$ is a basis of the linear space
$\bar{L}_{{\cT}_r}=\langle e_{T,T',2n+1}\rangle_{T\in{\cT_r}}$.
In particular, $\overline{\cB}_{\emptyset}$ is a basis of
$\bar{L}_{{\cT}_r}$ and $\bar{\xi}_S\in\bar{L}_{{\cT}_r}$;
thus
we consider the coordinates
$\{(\bar{\xi}_S)_T\}_{T\in{\cT_r}}$ of $\bar{\xi}_S $
with respect to the basis ${\overline{\cB}}_\emptyset$
of $\bar{L}_{{\cT}_r}$. Again, we have $(\bar{\xi}_S)_T =
\sigma_S(T)\sigma_{\emptyset}(T)$.

Let $A_{\emptyset,r}$ be the $(2^r\times 2^r)$--matrix defined as
follows. The rows are indexed by the subsets of $M_r=\{m_1,\ldots,m_r\}$
and the columns by the members of ${\cT}_r$. For $S\subseteq M_r$
and $T\in{{\cT}_r}$ the $T$--entry of the row
$R_S$ corresponding to $S$ is equal to $(\xi_{S})_T=
\sigma_S(T)\sigma_{\emptyset}(T)$ when $2n+1\not\in J$
and  $(\overline{\xi}_S)_T=\sigma_S(T)\sigma_{\emptyset}(T)$
when $2n+1\in J$.
In particular, every entry of
$A_{\emptyset,r}$ is either $1$ or $-1$ and all entries in the row
$R_\emptyset$ are equal to $1$.
\begin{lemma}\label{matrix}
\
\begin{enumerate}
\item
When $2n+1\not\in J$,
$A_{\emptyset,r}={((\xi_{S})_T)}_{\begin{subarray}{l}
S\subseteq M_r\\
T\in\cT_r\\
\end{subarray}} \text{ with } (\xi_{S})_T=(-1)^{|S\cap T|}$.
\item
When $2n+1\in J$,
$A_{\emptyset,r}={((\bar{\xi}_{S})_T)}_{\begin{subarray}{l}
S\subseteq M_r\\
T\in\cT_r\\
\end{subarray}} \text{ with }  (\bar{\xi}_{S})_T=(-1)^{|S\cap T|}$.
\end{enumerate}
\end{lemma}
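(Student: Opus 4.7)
The plan is to compute directly the coefficient of $e_{T,T'}$ in the wedge $\wedge^{2r}B_S$ (and similarly for the case $2n+1\in J$), and then compare it with the corresponding coefficient when $S=\emptyset$. Since $(\xi_S)_T=\sigma_S(T)\sigma_\emptyset(T)$ and $\sigma_\emptyset(T)^2=1$, the claim $(\xi_S)_T=(-1)^{|S\cap T|}$ is equivalent to the assertion that the ratio $\sigma_S(T)/\sigma_\emptyset(T)$ is $(-1)^{|S\cap T|}$. I will establish this ratio by a local, index-by-index computation.

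First I would rearrange the wedge product. Writing $B_S=\{v^1_1,\dots,v^1_r,v^2_1,\dots,v^2_r\}$ in the order given by Table~\ref{Table1}, a fixed permutation (depending only on $r$, not on $S$ or $T$) yields
\[
\wedge^{2r}B_S = (-1)^{\tau}\,\bigwedge_{\ell=1}^{r}\bigl(v^1_\ell\wedge v^2_\ell\bigr).
\]
The crucial observation is that for every $\ell$, the bivector $v^1_\ell\wedge v^2_\ell$ lies in the exterior square of the $4$-dimensional subspace $\langle e_{j_\ell},e_{j_\ell'},e_{m_\ell},e_{m_\ell'}\rangle$, so only the components $e_{j_\ell}\wedge e_{j_\ell'}$ and $e_{m_\ell}\wedge e_{m_\ell'}$ can contribute to the target $e_{t_\ell}\wedge e_{t_\ell'}$ with $t_\ell\in\{j_\ell,m_\ell\}$.

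Then I would compute these two local coefficients explicitly in the two cases. When $m_\ell\notin S$ one has
\[
(e_{j_\ell}+e_{m_\ell})\wedge(e_{j_\ell'}-e_{m_\ell'}) = e_{j_\ell}\wedge e_{j_\ell'} - e_{m_\ell}\wedge e_{m_\ell'} + (\text{cross terms}),
\]
while when $m_\ell\in S$,
\[
(e_{j_\ell}-e_{m_\ell'})\wedge(e_{j_\ell'}+e_{m_\ell}) = e_{j_\ell}\wedge e_{j_\ell'} + e_{m_\ell}\wedge e_{m_\ell'} + (\text{cross terms}).
\]
Thus the coefficient of $e_{j_\ell}\wedge e_{j_\ell'}$ is $+1$ in both cases, whereas the coefficient of $e_{m_\ell}\wedge e_{m_\ell'}$ is $-1$ if $m_\ell\notin S$ and $+1$ if $m_\ell\in S$. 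Consequently the local ratio (between the $S$ and $\emptyset$ cases) is $-1$ exactly when $t_\ell=m_\ell$ and $m_\ell\in S$, and $+1$ in every other situation. Multiplying these local ratios over all $\ell$ yields
\[
\frac{\sigma_S(T)}{\sigma_\emptyset(T)} = \prod_{\ell=1}^{r}(\text{local ratio}) = (-1)^{|\{\ell:\,m_\ell\in S,\;t_\ell=m_\ell\}|} = (-1)^{|S\cap T|},
\]
since the global sign $(-1)^{\tau}$ is independent of $S$ and $T$ and thus cancels.

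For the case $2n+1\in J$ the argument is essentially the same: the extra generator $e_\ell+e_{2n+1}-e_{\ell'}$ in $\widehat{B}_S$ does not depend on $S$, so it contributes identically to both $\overline{\xi}_S$ and $\overline{\xi}_\emptyset$ (its only useful wedge factor in a term of the form $e_{T,T',2n+1}$ is $e_{2n+1}$ itself, with coefficient $+1$). The preceding ratio computation therefore carries over verbatim, giving $(\overline{\xi}_S)_T=(-1)^{|S\cap T|}$. The only delicate point — and likely the main technical obstacle — is the bookkeeping of the global sign $(-1)^{\tau}$ arising from the reordering of the generators into the factored form $\bigwedge_\ell(v^1_\ell\wedge v^2_\ell)$; I would handle it by simply noting that $\tau$ depends only on $r$ and therefore cancels out when one takes the ratio against $S=\emptyset$.
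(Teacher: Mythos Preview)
Your proof is correct. The paper takes a slightly different route: rather than factoring the wedge product as $\bigwedge_\ell(v^1_\ell\wedge v^2_\ell)$ and computing all local contributions simultaneously, it argues inductively on $|S|$. Starting from $\xi_\emptyset$ (whose coordinates with respect to $\mathcal{B}_\emptyset$ are all $+1$), the paper observes that passing from $\xi_R$ to $\xi_{R\cup\{m_\ell\}}$ amounts to replacing only the two generators involving the index $m_\ell$ in $\wedge^{2r}B_R$; this replacement leaves the $e_{T,T'}$-component unchanged when $m_\ell\notin T$ and flips its sign when $m_\ell\in T$, so iterating along a chain $\emptyset\to\{m_1\}\to\cdots\to S$ yields $(-1)^{|S\cap T|}$. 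The two arguments are essentially dual to one another: you separate the computation by the index $\ell$ and multiply, while the paper separates by the elements of $S$ and accumulates one sign flip at a time. Your approach has the advantage of making the local coefficients explicit, at the cost of the global-sign bookkeeping you flagged; the paper's stepwise replacement sidesteps any reordering sign entirely because only two factors of the wedge change at each step and the rest of the product stays fixed.
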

\begin{proof}
Suppose $2n+1\not\in J$. The proof for the case $2n+1\in J$ is
entirely analogous.

Take  $R=\{m_1,\ldots,m_{\ell-1}\}\subseteq M_r$ and
let $S=R\cup\{m_{\ell}\}\subseteq M_r$.
Observe that $\xi_{S}$ is obtained from $\xi_{R}$ by
replacing
$e_{j_\ell}+e_{m_\ell}$ and $e_{j_\ell'}-e_{m_\ell'}$
by respectively $e_{j_\ell}-e_{m_\ell'}$ and $e_{j_\ell'}+e_{m_\ell}$
in $\wedge^{2r}B_{R}$.
Clearly, if $m_{\ell}\not\in T$, we have
$(\xi_S)_T=(\xi_R)_T$, as
$\xi_{R}$ and $\xi_{S}$ have exactly the same
components with respect to all the vectors $e_{T,T'}$ % in the basis
which do not contain the term $e_{m_\ell}$.
On the other hand,
when $m_\ell\in T$, the
sign of the component of $e_{T,T'}$ must be swapped; thus,
$(\xi_{S})_T=-(\xi_{R})_T$.
As $\xi_{S}$ can be obtained from the sequence
\[ \xi_\emptyset \to \xi_{m_1} \to \xi_{m_1,m_2} \to \cdots \to \xi_{R}\to \xi_S \]
and $\xi_{\emptyset}=\mathbf{1}$, we have
$(\xi_{S})_T=(-1)^{|S\cap T|}$.
This proves the lemma.
\qed\end{proof}

\begin{theo}
\label{t:hadamard}
 The matrix $A_{\emptyset,r}$ is Hadamard.
\end{theo}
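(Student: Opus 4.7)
The plan is to compute $A_{\emptyset,r} A_{\emptyset,r}^T$ directly and identify it with $2^r I$, exploiting the explicit entry formula from Lemma~\ref{matrix}.

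First, I would observe that the map $\mathcal{T}_r \to 2^{M_r}$ sending $T$ to $T \cap M_r$ is a bijection (as noted right before Equation~\eqref{eq wedge1}), and since $S \subseteq M_r$ we have $|S\cap T| = |S\cap (T\cap M_r)|$. So, after re-indexing columns by subsets of $M_r$, the matrix $A_{\emptyset,r}$ takes the form
\[
A_{\emptyset,r} \;=\; \bigl((-1)^{|S\cap U|}\bigr)_{S,U \subseteq M_r}.
\]
This is exactly the character table of the elementary abelian $2$-group $(\mathbb{Z}/2\mathbb{Z})^r$ under the identification $2^{M_r} \cong \mathbb{F}_2^r$.

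Next, I would verify the Hadamard condition $A_{\emptyset,r} A_{\emptyset,r}^T = 2^r I$ by an elementary double-counting argument. For any two rows indexed by $S, S' \subseteq M_r$, the corresponding inner product equals
\[
\sum_{U \subseteq M_r} (-1)^{|S\cap U|}(-1)^{|S'\cap U|} \;=\; \sum_{U \subseteq M_r} (-1)^{|(S\triangle S')\cap U|}.
\]
If $S = S'$, every summand is $1$ and the sum equals $2^r$. If $S \neq S'$, fix any $m \in S \triangle S'$ and pair each $U \subseteq M_r$ with $U \triangle \{m\}$; the two paired summands have opposite signs, so the total telescopes to $0$. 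Hence $A_{\emptyset,r} A_{\emptyset,r}^T = 2^r I$, which is precisely the Hadamard property.

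There is no genuine obstacle here once Lemma~\ref{matrix} is in hand: the entire content of the theorem is the translation of the combinatorial sign formula $(\xi_S)_T = (-1)^{|S\cap T|}$ into the Sylvester/character-table picture, after which orthogonality of rows is a one-line computation using the parity of $|(S\triangle S')\cap U|$. The only care needed is to make the identification of $\mathcal{T}_r$ with $2^{M_r}$ explicit so that the formula for entries is indexed consistently.
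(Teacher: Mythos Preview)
Your proof is correct and follows essentially the same approach as the paper: both invoke Lemma~\ref{matrix} to identify the entries of $A_{\emptyset,r}$ as $(-1)^{|S\cap T|}=(-1)^{\mathbf{S}\cdot\mathbf{T}}$ (with $\mathbf{S},\mathbf{T}$ the incidence vectors in $\FF_2^r$), and then appeal to the orthogonality of these sign vectors. The only difference is cosmetic---the paper cites \cite[Lemma 4.7]{St} for this last step, whereas you write out the symmetric-difference pairing argument explicitly.
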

\begin{proof}
By Lemma~\ref{matrix}, $(\xi_{S})_T=(-1)^{\mathbf{S}\cdot\mathbf{T}}$, where
$\mathbf{S}$ and $\mathbf{T}$ are the incidence vectors of
$S$ and $T\cap M_r$ with respect to $M_r$ and $\cdot$ denotes the usual
inner product.
The result now is a consequence of~\cite[Lemma 4.7, page 337]{St}.
\qed\end{proof}

In particular, $(\xi_{S})_T=1$ if, and only if, $S$ and $T$ share an
even number of elements.
\begin{co}
\label{c:php}
  The design associated to the matrix $A_{\emptyset,r}$ is the point--hyperplane
  design of $\PG(r,2)$; in particular, $A_{\emptyset,r}$ is a Sylvester
  matrix.
\end{co}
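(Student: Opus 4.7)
The plan is to reduce the statement to the classical identification of the Sylvester Hadamard matrix with the incidence structure obtained from the dot product on $\F_2^r$. By Lemma~\ref{matrix} the entries of $A_{\emptyset,r}$ are $(\xi_{S})_T=(-1)^{|S\cap T|}$. The first step is to use the bijection $\cT_r\to 2^{M_r}$, $T\mapsto T\cap M_r$, already noted in Section~\ref{construction}, to reindex the columns of $A_{\emptyset,r}$ by subsets of $M_r$; both rows and columns then live in the same index set $2^{M_r}$.

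Next, I would identify $2^{M_r}$ with $\F_2^r$ by associating to each $S\subseteq M_r$ its characteristic vector $\mathbf S\in\F_2^r$, and fix the order on points of $\fX$ (equivalently on the rows of $A_{\emptyset,r}$) by declaring $S<S'$ whenever the integer whose binary expansion is $\mathbf S$ is strictly less than the integer whose binary expansion is $\mathbf S'$; impose the same order on columns. Under this identification the entry in position $(S,T)$ equals $(-1)^{\mathbf S\cdot\mathbf T}$, where $\cdot$ is the standard $\F_2$-inner product, as in the final remark of the proof of Theorem~\ref{t:hadamard}.

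The third and main step is to show that the reordered matrix coincides with $S_r$, the $r$-fold Kronecker product of $S_1=\bigl(\begin{smallmatrix}1&1\\1&-1\end{smallmatrix}\bigr)$ with itself. I would proceed by induction on $r$: splitting the index $\mathbf S\in\F_2^r$ as $(s_1,\mathbf S'')$ with $s_1\in\F_2$ and $\mathbf S''\in\F_2^{r-1}$, and doing the same for $\mathbf T$, the identity
\[
(-1)^{\mathbf S\cdot \mathbf T}=(-1)^{s_1 t_1}\cdot (-1)^{\mathbf S''\cdot \mathbf T''}
\]
is exactly the block structure of a Kronecker product, giving $A_{\emptyset,r}=S_1\otimes A_{\emptyset,r-1}$, and the base case $r=1$ is $S_1$ itself. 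Hence $A_{\emptyset,r}=S_r$, which is the Sylvester matrix by definition.

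Finally, the translation from a Sylvester matrix of order $2^r$ to the point–hyperplane design of $\PG(r-1,2)$ (equivalently, $\PG(r,2)$ with the indexing used in the corollary statement) is the standard normalisation procedure referenced in Section~\ref{Hadamard-Sylvester}: normalise $A_{\emptyset,r}$ so that its first row and column are all $1$'s (which is already the case for $A_{\emptyset,r}$ because $\sigma_\emptyset$ is encoded in the chosen basis $\cB_\emptyset$), replace $-1$ by $0$, and delete that first row and column; the resulting $(2^r-1)\times(2^r-1)$ incidence matrix is the one of the point–hyperplane design of projective $2$-space, as recalled in~\cite[Example 1.31]{CvL}. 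The main obstacle is really only bookkeeping: making sure the chosen order on the $S$'s genuinely puts $A_{\emptyset,r}$ into the tensor-product form, which is what the lexicographic/binary order above guarantees.
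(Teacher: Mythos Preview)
Your argument is correct, but it inverts the paper's logic and in doing so folds in what the paper proves separately as Theorem~\ref{matrice-A_r}. The paper's proof of Corollary~\ref{c:php} goes directly from the formula $(\xi_S)_T=(-1)^{|S\cap T|}$ to the design: it observes that, after identifying $S$ and $T\cap M_r$ with their characteristic vectors in $\F_2^r$, a point $X$ lies on a hyperplane with pole $P$ in $\PG(r-1,2)$ exactly when $P\cdot X=0$, i.e.\ when $|S\cap T|$ is even; deleting the all--$1$ row and column and replacing $-1$ by $0$ then yields the point--hyperplane incidence matrix, and the Sylvester conclusion follows by \emph{definition} (any Hadamard matrix giving that design is called Sylvester). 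The explicit Kronecker decomposition $A_{\emptyset,r}=A_{\emptyset,r-1}\otimes A_{\emptyset,1}$ is deferred to Theorem~\ref{matrice-A_r}, where the recursive order on $2^{M_r}$ is introduced for exactly the bookkeeping purpose you flag. Your route---prove the Kronecker form first, then read off the design---is equally valid, and your binary--integer order is in fact the same as the paper's recursive order; the trade--off is that the paper's direct argument for the design needs no ordering at all, while yours gives the explicit Sylvester form in one shot. (You also correctly spotted the off--by--one in the projective dimension.)
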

\begin{proof}
It is well known that for any hyperplane $\pi$ of $\PG(r,2)$,
there is a point
$P_{\pi}\in\PG(r,2)$ such that
\[ \pi=\{ X\in\PG(r,2): P_{\pi}\cdot X=0 \}, \]
with $\cdot$ the usual inner product of $\PG(r,2)$ and
$P_{\pi}=(p_1,p_2,\ldots,p_{r+1})$, $X=(x_1,x_2,\ldots,x_{r+1})$
binary vectors.
In particular, $X\in\pi$ if and only if
$P_{\pi}\cdot X=|\{i: p_i=x_i\}|\pmod{2}=0$,
that is to say if and only if the vectors $P_{\pi}$ and $X$ have an even
number of $1$'s in common. By Lemma~\ref{matrix},
it is now straightforward to see that the matrix
$A_{\emptyset,r}'$ obtained from $A_{\emptyset,r}$ by deleting the all--$1$
row and column
and replacing $-1$ with  $0$ contains the incidence vectors of the symmetric
design of points and hyperplanes of a projective space $\PG(r,2)$.
\qed\end{proof}

Recall that equivalent Hadamard matrices give isomorphic Hadamard
designs; the converse, however, is not true in general.

As anticipated,
we now show how the rows and columns of $A_{\emptyset,r}$ or, equivalently, the
points of the polar caps $\overline{X}_S$, might be ordered
as to be able to describe it in terms of the
Kronecker product construction.

Since both the rows and the columns of $A_{\emptyset,r}$ can be indexed
by the subsets of $M_r$,
(for the columns we just consider
 $T\cap M_r$ with $T\in \cT_r$) it is enough
to introduce a suitable order $<_r$
on  the set $2^{M_r}$ of all subsets of $M_r$.
We proceed  in a recursive way
as follows:
\begin{itemize}
  \item for $r=1$, define
    $\emptyset<_1\{m_1\}$;
  \item suppose we have ordered $2^{M_{r-1}}$, then
    for any $X,Y\subseteq M_r$, we say $X<_rY$ if and only if
    \begin{enumerate}
    \item $X<_{r-1}Y$ when $m_r\not\in X\cup Y$;
    \item $m_r\not\in X$ and $m_r\in Y$;
    \item $m_r\in X\cap Y$ and $(X\setminus\{m_r\})<_{r-1}(Y\setminus\{m_r\})$.
    \end{enumerate}
\end{itemize}
Observe that  $<_r$, when restricted to $M_{r-1}$, is the same as
$<_{r-1}$. Thus,
we shall drop the subscript from $<_{r}$,
given that no ambiguity may arise.
\par

As examples, for $r=2$ we have
\begin{small}
\[ \emptyset<\{m_1\}<\{m_2\}<\{m_1,m_2\}, \]
\end{small}
while, for $r=3$,
\begin{small}
\[ \emptyset<\{m_1\}<\{m_2\}<\{m_1,m_2\}<\{m_3\}<\{m_1,m_3\}<\{m_2,m_3\}<\{m_1,m_2,m_3\}. \]
\end{small}
The minimum under $<$ is always $\emptyset$, and the maximum $M_r$.
Using the order induced by $<$ on both the rows and the columns of
$A_{\emptyset,r}$ we prove the following.
\begin{theo}
\label{matrice-A_r}
For any $r>1$ we have
$A_{\emptyset,r}=A_{\emptyset,r-1}\otimes A_{\emptyset,1}$.
\end{theo}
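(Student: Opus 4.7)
The plan is to reduce the identity to a direct consequence of Lemma~\ref{matrix} and the recursive definition of the ordering $<_r$. First I would verify, by a straightforward induction on $r$ using the three clauses defining $<_r$, that the position of $S\subseteq M_r$ under $<_r$ is exactly
\[
p_S\ =\ \sum_{j=1}^{r} 2^{j-1}\chi_S(m_j),
\]
i.e.\ $<_r$ orders subsets according to the integer whose binary expansion records $\chi_S(m_1),\chi_S(m_2),\dots,\chi_S(m_r)$ with $m_1$ as least significant bit. The examples given for $r=2,3$ agree with this formula, and the three clauses of the definition correspond exactly to comparing two integers by their most significant bit first. A direct consequence is the decomposition $p_S\,=\,2\,p_{S''}+\chi_S(m_1)$, where $S''\subseteq M_{r-1}$ is the image of $S\setminus\{m_1\}$ under the shift $m_j\mapsto m_{j-1}$, and $p_{S''}$ is the position of $S''$ under $<_{r-1}$.

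Next, by Lemma~\ref{matrix} applied in rank $r$, the entry of $A_{\emptyset,r}$ at the row of $S$ and the column of $T$ equals $(-1)^{|S\cap T|}$. Splitting the intersection according to whether $m_1$ lies in $S\cap T$ gives
\[
(-1)^{|S\cap T|}\ =\ (-1)^{|S''\cap T''|}\cdot(-1)^{\chi_S(m_1)\chi_T(m_1)}.
\]
By Lemma~\ref{matrix} applied in ranks $r-1$ and $1$, the two factors on the right are, respectively, the $(S'',T'')$--entry of $A_{\emptyset,r-1}$ and the $(\chi_S(m_1),\chi_T(m_1))$--entry of $A_{\emptyset,1}$.

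Finally, I would note that the bijection $p_S\mapsto(p_{S''},\chi_S(m_1))$ obtained in the first step is precisely the standard indexing used when forming the Kronecker product $A_{\emptyset,r-1}\otimes A_{\emptyset,1}$, where the outer factor ranges over a set of size $2^{r-1}$ and the inner factor over a set of size $2$. Therefore the factorisation above is exactly the entrywise definition of the Kronecker product, giving the claimed equality $A_{\emptyset,r}=A_{\emptyset,r-1}\otimes A_{\emptyset,1}$.

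No step presents a genuine difficulty: the position formula is routine induction, and the crucial identity
$|S\cap T|=|(S\setminus\{m_1\})\cap (T\setminus\{m_1\})|+\chi_S(m_1)\chi_T(m_1)$
is elementary. The only care needed is bookkeeping, to make sure the labelling of rows and columns of $A_{\emptyset,r-1}$ used inside the Kronecker product agrees with the ordering $<_{r-1}$ induced on $M_{r-1}$; once this is checked, the theorem follows at once and, combined with Theorem~\ref{t:hadamard} and Corollary~\ref{c:php}, confirms that the polar caps $\overline{\fX}_{2r}$ and $\overline{\fX}'_{2r+1}$ produce genuine Sylvester matrices.
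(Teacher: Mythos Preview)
Your proof is correct and rests on the same core idea as the paper's: use Lemma~\ref{matrix} to write the entries as $(-1)^{|S\cap T|}$ and split the intersection by peeling off one element of $M_r$. The one genuine difference is \emph{which} element you peel off. You split on $m_1$ (the least significant bit in your position formula), which forces the relabelling shift $m_j\mapsto m_{j-1}$ to identify $S\setminus\{m_1\}$ with a subset of $M_{r-1}$; in exchange, the decomposition $p_S=2p_{S''}+\chi_S(m_1)$ matches the standard row/column indexing of $A_{\emptyset,r-1}\otimes A_{\emptyset,1}$ on the nose. The paper instead splits on $m_r$, which is more natural given the recursive definition of $<_r$ (clause~2 says all subsets not containing $m_r$ precede those that do), so the first $2^{r-1}$ rows and columns are literally indexed by subsets of $M_{r-1}$ in the order $<_{r-1}$, and the $2\times 2$ block form
\[
A_{\emptyset,r}=\begin{pmatrix} A_{\emptyset,r-1} & A_{\emptyset,r-1} \\ A_{\emptyset,r-1} & -A_{\emptyset,r-1}\end{pmatrix}
\]
is immediate with no shift needed. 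Either route works; yours is a touch more bookkeeping but makes the Kronecker identification cleaner, while the paper's is shorter because the ordering was designed with $m_r$ as the outermost split.
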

\begin{proof}
The matrix $A_{\emptyset,r}$ encodes the parity of the intersection of
subsets of $M_r$; as we took the same order for columns and rows,
$A_{\emptyset,r}$ is clearly symmetric.
We now show that
\[ A_{\emptyset,r}=\begin{pmatrix}
  A_{\emptyset,r-1} & A_{\emptyset,r-1} \\
  A_{\emptyset,r-1} & -A_{\emptyset,r-1}
  \end{pmatrix}=A_{\emptyset,r-1}\otimes\begin{pmatrix} 1 & 1 \\ 1 & -1
  \end{pmatrix}. \]
Indeed, the elements indexing the first $2^{r-1}$ rows and columns
of $A_{\emptyset,r}$ are all subsets of $M_{r-1}$ in the order
given by $<_{r-1}$.
Thus, the minor they determine is indeed $A_{\emptyset,r-1}$.
Observe now that if $m_r\in Y$ and $m_r\not\in X$, then
\[ (-1)^{|X\cap Y|}=(-1)^{|X\cap (Y\setminus\{m_r\})|}. \]
In particular, the entry in row $1\leq x\leq 2^{r-1}$ and column
$2^{r-1}<y\leq 2^r$  is the same as that in row $x$ and
column $y-2^{r-1}$. It follows that the minor of $A_{\emptyset,r}$
comprising the first $2^{r-1}$ rows and the last $2^{r-1}$ columns
is also $A_{\emptyset,r-1}$. By symmetry, this applies also to
the minor consisting of the last $2^{r-1}$ rows and the first $2^{r-1}$
columns.
Finally, consider an entry in row $2^{r-1}<x\leq 2^{r}$ and column
$2^{r-1}<y\leq 2^r$. By  definition of $<_r$,
the sets $X,Y$ indexing this entry are
$X=X'\cup\{m_r\}$ and $Y=Y'\cup\{m_r\}$ where $X'$ and $Y'$
index the entry in row $x-2^{r-1}$ and column $y-2^{r-1}$.
In particular, as $|X\cap Y|=|X'\cap Y'|+1$,
\[ (-1)^{|X\cap Y|}=-(-1)^{|X'\cap Y'|}. \]
It follows that this minor of $A_{\emptyset,r}$
is $-A_{\emptyset,r-1}$.

By Lemma~\ref{matrix},
\[ A_{\emptyset,1}=\begin{pmatrix}
1&1\\
1&-1
\end{pmatrix}. \]
The theorem now follows by recursion.
\qed\end{proof}

By Theorem~\ref{matrice-A_r}, the matrix $A_{\emptyset,r}$ is obtained
by the Sylvester construction.  As a corollary of
Theorem~\ref{matrice-A_r}, the codes associated to the  caps
constructed in Section~\ref{construction} are Reed--Muller
codes of the first order.

\medskip

\noindent
{\bf Acknowledgements.} The authors wish to express their gratitude to
Antonio Pasini for his very helpful remarks on a first version of this paper.
In particular, the idea to exploit partial spreads to get an improvement
for the bound of the minimal distance contained in Main Theorem \ref{main1}
is due to him.

\bigskip

\noindent

\end{document}